\newtheorem{thm}{Theorem}[section]
\newtheorem{lem}[thm]{Lemma}
\newtheorem{prop}[thm]{Proposition}
\newtheorem{cor}[thm]{Corollary}
\newtheorem{rem}{Remark}
\newtheorem*{conj}{Conjecture}
\newcommand{\E}{\mathbb{E}}
\newcommand{\prob}{\mathbb{P}}
\newcommand{\R}{\mathbb{R}}
\newcommand{\N}{\mathbb{N}}
\newcommand{\F}{\mathcal{F}}
\newcommand{\e}{\varepsilon}
\newcommand{\re}{\text{Re}}
\title[Maxima of a randomized Riemann zeta function]{Maxima of a randomized Riemann zeta function, \\and branching random walks}
\thanks{The authors would like to thank the Isaac Newton Institute for Mathematical Sciences and the Centre de Recherches Math\'ematiques for hospitality and financial support during part of this work. }
\author[L.-P. Arguin]{Louis-Pierre ARGUIN}
\thanks{L.-P. A. is supported by a NSERC discovery grant and a grant FQRNT {\it Nouveaux chercheurs}.}
\address{L.-P. Arguin\\D\'epartement de Math\'ematiques et Statistique, Universit\'e de Montr\'eal, Montr\'eal QC H3T 1J4, Canada; 
Department of Mathematics, Baruch College and Graduate Center, City University of New York, New York, NY 10010, USA.
}
\email{arguinlp@dms.umontreal.ca}
\author[D. Belius]{David Belius}
\thanks{D. B. was supported by the Swiss National Science Foundation, the Centre de Recherches Math\'{e}matiques and the Institut des Sciences Math\'{e}matiques, for part of the research leading up to this work.}
\address{D. Belius\\ Courant Institute of Mathematical Sciences, New York University, New York, NY 10012, USA.}
\email{david.belius@cantab.net}
\author[A. J. Harper]{Adam J. Harper}
\thanks{A. J. H. is supported by a research fellowship at Jesus College, Cambridge.}
\address{A. J. Harper\\ Jesus College, Cambridge, CB5 8BL, England.}
\email{A.J.Harper@dpmms.cam.ac.uk}
\keywords{Extreme Value Theory, Riemann Zeta function, Branching Random Walk} 
\subjclass[2000]{60G70, 11M06}
\begin{document}

\begin{abstract}
A recent conjecture of Fyodorov--Hiary--Keating states that the maximum of the absolute value of the Riemann zeta function on a typical bounded interval of the critical line is
$\exp\{\log \log T -\frac{3}{4}\log \log \log T+O(1)\}$, for an interval at (large) height $T$. 
In this paper, we verify the first two terms in the exponential
for a model of the zeta function, which is essentially a randomized Euler product.
The critical element of the proof is the identification of an approximate tree structure,
present also in the actual zeta function, 
which allows us to relate the maximum to that of a branching random walk.
\end{abstract}

\maketitle

\section{Introduction}

The Riemann zeta function is defined for $\re(s) > 1$ by a sum over integers, or equivalently by an {\it Euler product} over primes, as
\begin{equation}
\zeta(s)=\sum_{n=1}^\infty \frac{1}{n^s}=\prod_{p \text{ primes}} (1-p^{-s})^{-1}\ ,
\end{equation}
and by analytic continuation for other complex $s$. The behaviour of the function
on the critical line $\re(s)=1/2$ is a major theme in number theory,
the most important questions of course concerning the zeroes (e.g. the Riemann Hypothesis).

This paper is motivated by the study of the large values of 
$|\zeta(s)|$ on the critical line $s=1/2+it$.
Little is known about the behavior on long intervals, say $0\leq t \leq T$ for $T$ large.
The Lindel\"of hypothesis, which is implied by the Riemann hypothesis, states that $\max_{0\leq t\leq T}|\zeta(1/2+it)|$ grows slower than any small power of $T$. See the paper of Farmer, Gonek and Hughes~\cite{farmer-gonek-hughes} for more precise conjectures about this maximum size, and the paper of Soundararajan~\cite{soundextremes} for a rigorous lower bound. More recently, Fyodorov, Hiary and Keating considered the maximum on bounded intervals of the critical line.
They made the following conjecture:
\begin{conj}[Fyodorov--Hiary--Keating \cite{fyodorov-hiary-keating, fyodorov-keating}]
\label{conj: FHK}
For $\tau$ sampled uniformly from $[0,T]$,
\begin{equation}
\label{eqn: FHK}
\max_{h\in [0,1]} \log |\zeta(1/2+i(\tau +h))|= \log\log T -\frac{3}{4}\log\log\log T + O_P(1)\ ,
\end{equation}
where $O_P(1)$ is a term that is stochastically bounded as $T\to \infty$.
\end{conj}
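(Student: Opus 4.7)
The plan is to mirror the strategy we develop for the randomized model, replacing each probabilistic input by its analytic counterpart for the genuine $\zeta$. The central analytic ingredient would be, for $\tau$ typical in $[0,T]$ and uniformly in $h\in[0,1]$, a rigid Selberg-type approximation
\[
\log\bigl|\zeta\bigl(\tfrac{1}{2}+i(\tau+h)\bigr)\bigr| = \re\sum_{p\le X}\frac{1}{p^{1/2+i(\tau+h)}} + O(1),
\]
with $X$ chosen as a small power of $T$ (the prime-power term is $O(1)$ and can be absorbed into the remainder). This reduces the conjecture to bounding the maximum over $h$ of the displayed short Dirichlet polynomial.

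One then decomposes geometrically, writing the prime sum as $\sum_{k=1}^K Y_k(h)$ where $Y_k$ collects the primes with $\log\log p$ in a dyadic block of unit width and $K\asymp\log\log T$. By Selberg's central limit theorem and its multi-point refinements (Bourgade; Heap--Soundararajan), for typical $\tau$ the family $\{Y_k(h)\}$ is approximately a Gaussian process whose covariance at the $k$-th level decays in $|h-h'|$ on the scale $e^{-k}$. Discretising the interval $[0,1]$ at this same scale at level $k$ converts the problem into the maximum of a log-correlated Gaussian field or, equivalently up to a finite branching factor, into the maximum of a branching random walk with $K$ levels. The Bramson / Addario-Berry-Reed asymptotics for the BRW maximum then recover exactly the two-term expansion $\log\log T - \tfrac{3}{4}\log\log\log T + O_P(1)$ predicted by Conjecture~\ref{conj: FHK}.

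For the upper bound I would run a chaining / multiscale first-moment argument on this tree, replacing each use of independence in the toy model by a large-moment estimate for short Dirichlet polynomials over primes (twisted fourth-moment bounds of Soundararajan--Harper and Radziwi\l\l--Soundararajan give exactly the tail inputs needed). The lower bound requires a truncated second-moment computation on the barrier event that the partial sums $\sum_{j\le k} Y_j(h)$ remain below an appropriate linear ceiling; here one needs sixth or eighth-moment control of the Dirichlet polynomials so that the second moment matches the Gaussian benchmark and the Paley--Zygmund step goes through.

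The principal obstacle is the rigid Selberg approximation itself, uniformly in $h\in[0,1]$ and down to $X$ small enough for the branching tree to reach depth of order $\log\log T$. Even under the Riemann Hypothesis the explicit formula only reaches $X$ a small positive power of $T$ and converts the remainder into a sum over nontrivial zeros at distance $\lesssim 1/\log T$ from $1/2+i(\tau+h)$. These local zeros carry a genuine random-matrix contribution, believed to be responsible for the finer $-\tfrac14\log\log\log T$ refinement in the full Fyodorov--Hiary--Keating prediction, and controlling them jointly over all $h\in[0,1]$ while marrying the zero analysis to the prime-sum tree analysis is, at present, well beyond reach. The contribution of the present paper is precisely to isolate the combinatorial and probabilistic skeleton of this programme inside the randomized model, where these analytic difficulties disappear and the BRW reduction can be executed rigorously.
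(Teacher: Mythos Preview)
The statement you are attempting to prove is the Fyodorov--Hiary--Keating \emph{conjecture}; the paper does not prove it and does not claim to. There is therefore no ``paper's own proof'' to compare against. The paper's main theorem concerns only the randomized Euler product model \eqref{eqn: process}, and even there the error term obtained is $o_P(\log\log\log T)$, strictly weaker than the $O_P(1)$ in the conjecture.

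Your proposal is not a proof either, and to your credit you say so: you identify the rigid Selberg approximation uniformly in $h\in[0,1]$ as ``well beyond reach'', and this is exactly the missing ingredient. A few specific points where the outline would fail as written: (i) Proposition~\ref{prop: rigorous} only gives the approximation on a set $\mathcal{H}$ of measure $0.99$, not uniformly in $h$, and the exceptional set could in principle contain the maximizer; (ii) the multi-point Selberg CLT of Bourgade gives convergence of finite-dimensional distributions but is far too weak to control a maximum over $\asymp\log T$ points, as the paper itself remarks; (iii) the moment bounds you invoke for Dirichlet polynomials over primes are not known at the level of precision needed for the barrier second-moment computation --- one needs joint large-deviation control of $(\sum_{p\le P} p^{-1/2-i(\tau+h_1)},\sum_{p\le P} p^{-1/2-i(\tau+h_2)})$ with $\tau$ averaged over $[0,T]$ and $P$ up to a small power of $T$, and the known twisted-moment technology does not reach this. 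Your final paragraph correctly frames the paper's contribution: it carries out the branching-random-walk programme rigorously in the randomized model, where the $(U_p)$ are genuinely independent and the analytic obstacles vanish.
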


The main result of this paper is a proof of the validity of the first two terms in \eqref{eqn: FHK} for a {\it random model} of $\zeta$ defined in \eqref{eqn: process} below, which is essentially a randomized Euler product. Until now such precise estimates were not known rigorously even for models of zeta.

The conjecture is intriguing for many reasons. From a number theory point of view, the precision of the prediction is striking. 
From a probability point of view, the leading and subleading order of the maximum correspond exactly to those of the maximum of a branching random walk
(which is a collection of correlated random walks indexed by the leaves of a tree), as will be explained below.
In fact, the key element of the proof for the random model will be the identification of an approximate tree structure for the zeta function.

\subsection{Modelling the zeta function}
If we take logarithms and Taylor expand the Euler product formula for the zeta function, we find for $\re(s)>1$,
\begin{equation}
\label{eqn: Euler product taylor}
\log\zeta(s) = - \sum_{p} \log(1-p^{-s}) = \sum_{k=1}^{\infty} \frac{1}{k} \sum_{p} \frac{1}{p^{ks}} = \sum_{p} \frac{1}{p^{s}} + O(1) , \;\;\;\;\; 
\end{equation}
since the total contribution from all proper prime powers ($p^{ks}$ with $k \geq 2$) is uniformly bounded. 
One of the great challenges of analytic number theory is to understand how the influence of the Euler product may persist for general $s \in \mathbb{C}$. 
The definition of our random model is based on a rigorous result in that direction, assuming the truth of the Riemann Hypothesis, proved by Harper~\cite{harper} by adapting a method of Soundararajan \cite{sound} (which itself builds heavily on classical work of Selberg~\cite{selberg}).
\begin{prop}[See Proposition 1 of Harper \cite{harper}]
\label{prop: rigorous}
Assume the Riemann Hypothesis. For $T$ large enough  there exists a set $\mathcal{H} \subseteq [T,T+1]$, of measure at least $0.99$, such that
\begin{equation}
\label{eqn: sum rigorous}
\log|\zeta(1/2+it)| = \textup{Re}\left(\sum_{p \leq T} \frac{1}{p^{1/2+it}} \frac{\log(T/p)}{\log T}\right) + O(1) \;\;\;\;\; \forall t \in \mathcal{H} .
\end{equation}
\end{prop}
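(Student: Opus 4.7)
The plan is to begin with a Selberg-style approximate functional equation expressing $\log\zeta(1/2+it)$ as a short sum over primes plus a remainder concentrated on the nontrivial zeros of $\zeta$, and then, under the Riemann Hypothesis, to show by a second-moment estimate that this remainder is bounded on a subset of $[T,T+1]$ of measure at least $0.99$.

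Concretely, I would start from the Mellin-inverted identity
\[
\sum_{n\leq x}\frac{\Lambda(n)}{n^{s}\log n}\log(x/n)=\frac{1}{2\pi i}\int_{(c)}\log\zeta(s+w)\frac{x^{w}}{w^{2}}\,dw,
\]
valid for $c$ sufficiently large, and shift the contour leftward past the double pole at $w=0$ (which, after division by $\log x$, produces $\log\zeta(s)$), the logarithmic branch point at $w=1-s$ coming from the pole of $\zeta$ (whose contribution is negligible at our scales), and the logarithmic branch points at $w=\rho-s$ coming from the nontrivial zeros $\rho$. Specializing to $s=1/2+it$, $x=T$, taking real parts, and absorbing into an $O(1)$ the contribution of prime powers $p^{k}$ with $k\geq 2$ (whose series $\sum_{p}\sum_{k\geq 2}p^{-k/2}/k$ converges absolutely), one arrives at
\[
\log|\zeta(1/2+it)|=\re\sum_{p\leq T}\frac{1}{p^{1/2+it}}\frac{\log(T/p)}{\log T}+R(t)+O(1),
\]
where, under RH (so each $\rho=1/2+i\gamma$), the zero remainder takes the clean form
\[
R(t)=\re\left(\frac{1}{\log T}\sum_{\gamma}\frac{T^{i(\gamma-t)}}{(\gamma-t)^{2}}\right).
\]

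The crux is then to show that $\{t\in[T,T+1]:|R(t)|>M\}$ has measure at most $0.01$ once $M$ is sufficiently large. I would obtain this from the mean-square identity
\[
\int_{T}^{T+1}|R(t)|^{2}\,dt=\frac{1}{(\log T)^{2}}\sum_{\gamma_{1},\gamma_{2}}T^{i(\gamma_{1}-\gamma_{2})}\int_{T}^{T+1}\frac{dt}{(\gamma_{1}-t)^{2}(\gamma_{2}-t)^{2}}.
\]
The diagonal piece $\gamma_{1}=\gamma_{2}$ is handled by stratifying $t$ according to its distance from the nearest ordinate and invoking the Riemann--von Mangoldt formula, which furnishes $O(\log T)$ ordinates per unit interval at height $T$. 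The off-diagonal piece is integrated exactly after partial-fraction expansion in $t$; the prefactors $1/(\gamma_{1}-\gamma_{2})^{2}$ and $1/(\gamma_{1}-\gamma_{2})^{3}$ that emerge are then tamed by pair-counting bounds for zeros of $\zeta$, aided by the cancellation in the oscillating factor $T^{i(\gamma_{1}-\gamma_{2})}$. Granted a bound of $O(1)$ on the full mean square, Markov's inequality produces $\mathcal{H}=\{t:|R(t)|\leq M\}$ of the required measure, and the claimed asymptotic holds on $\mathcal{H}$.

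The hardest step is the off-diagonal portion of the mean-square estimate. No pointwise bound of the form $R(t)=O(1)$ is available: the weight $1/(\gamma-t)^{2}$ blows up whenever $t$ approaches a zero, and in $[T,T+1]$ a typical $t$ sits within distance $1/\log T$ of some ordinate. The averaging circumvents this because the exceptional set of $t$ near zeros has small measure. A cleaner alternative, which is essentially the route followed by Soundararajan and sharpened by Harper, is to work instead at $s=1/2+\sigma+it$ for a small $\sigma$ of order $1/\log T$, where the zero sum is better behaved and admits a two-sided comparison with the prime sum; the final statement then emerges after letting $\sigma\to 0^{+}$ and controlling the perturbation uniformly in $t$.
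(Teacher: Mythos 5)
The paper does not actually prove this proposition: it is cited verbatim as Proposition 1 of Harper \cite{harper} (which in turn adapts Soundararajan's method, itself building on Selberg's). There is therefore no internal proof in the paper to compare against, and I can only assess your sketch on its own merits.

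Two steps as written would fail. First, you shift the contour in the Mellin representation of $\log\zeta(s+w)$ past the points $w=\rho-s$, treating them as poles; but $\log\zeta$ has logarithmic \emph{branch points} there, not poles, and branch points do not contribute residues. The clean term $T^{i(\gamma-t)}/(\gamma-t)^{2}$ you write down is exactly what falls out of the analogous identity for $-\zeta'/\zeta(s+w)$, which does have simple poles at $w=\rho-s$; one then passes to $\log|\zeta|$ by integrating in $\sigma$. That is the route Selberg, Tsang, Soundararajan and Harper actually take, and it is not interchangeable with a naive contour shift applied to $\log\zeta$ itself. Second, and more seriously, the proposed mean-square bound $\int_{T}^{T+1}|R(t)|^{2}\,dt=O(1)$ is false for the $R(t)$ you display: the interval $[T,T+1]$ contains $\asymp\log T$ ordinates $\gamma$, and near each such $\gamma$ your $R(t)$ behaves like $(\log T)^{-1}(\gamma-t)^{-2}$, so the diagonal term already involves $\int dt/(\gamma-t)^{4}=+\infty$. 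This is also a consistency check that the displayed $R(t)$ cannot be the correct critical-line remainder: $\log|\zeta(1/2+it)|$ has only logarithmic (hence square-integrable) singularities at zeros, while your $R(t)$ has non-integrable $1/u^{2}$ singularities. You gesture at the repair in your last paragraph --- work at $\sigma=1/2+O(1/\log T)$, where the zero sum is tame, and control the passage to the critical line outside a small exceptional set --- but in the Soundararajan--Harper argument that is not a ``cleaner alternative'': it is the essential ingredient. Without it, Markov's inequality has no finite quantity to act on and the construction of $\mathcal{H}$ never gets started.
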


The set $\mathcal{H}$ produced in Proposition \ref{prop: rigorous} consists of values $t$ that are not abnormally close, in a certain averaged sense, to many zeros of the zeta function. It seems reasonable to think that one shouldn't typically find maxima very close to zeros. Moreover, if one only wants an upper bound then the restriction to the set $\mathcal{H}$ can in fact be removed, at the cost of a slightly more complicated
right-hand side. Therefore, to understand the typical size of $\max_{0 \leq h \leq 1} \log|\zeta(1/2+i(\tau+h))|$ as $\tau$ varies we should try to understand the typical size of $\max_{0 \leq h \leq 1} \sum_{p \leq T} \re\left(\frac{1}{p^{1/2+i(\tau + h)}} \frac{\log(T/p)}{\log T}\right)$. The factor $\log(T/p)/\log T$ is a smoothing introduced for technical reasons. For simplicity we shall ignore it in our model.

Since the values of $\log p$ are linearly independent for distinct primes, it is easy to check by computing moments that the finite-dimensional distributions of the process $(p^{-i\tau} , \text{ $p$ primes})$, where $\tau$ is sampled uniformly from $[0,T]$, converge as $T \rightarrow \infty$ to those of a sequence of independent random variables distributed uniformly on the unit circle.
Following \cite{harper}, this observation suggests to build a model from a probability space $(\Omega,\F,\prob)$ with random variables $(U_p,p \text{ primes})$ which are uniform on the
unit circle, and independent. 
For $T>0$ and $h\in \R$, we consider the random variables $\sum_{p\leq T}p^{-1/2}\re (U_p p^{-ih})$.
In view of Proposition \ref{prop: rigorous}, the process
 \begin{equation}
 \label{eqn: process}
\left(\sum_{p\leq T}\frac{\re (U_p p^{-ih})}{p^{1/2}}, h\in [0,1]\right)
\end{equation}
seems like a reasonable model for the large values of $(\log |\zeta(1/2+i(\tau+h))|, h\in [0,1])$.

\subsection{Main Result}
In this paper, we provide evidence in favor of Conjecture \ref{conj: FHK} by proving a similar statement for the random model \eqref{eqn: process}.
At the same time, we hope to outline a possible approach to tackle the conjecture for the Riemann zeta function itself.

\begin{thm}
\label{thm: main}
Let $(U_p,p \text{ primes})$ be independent random variables on $(\Omega,\F,\prob)$, distributed uniformly on the unit circle.
Then
\begin{equation}
\label{eq: main}
\max_{h\in [0,1]} \sum_{p\leq T}\frac{\re (U_p p^{-ih})}{p^{1/2}} =\log\log T -\frac{3}{4}\log\log\log T + o_P(\log\log\log T) , 
\end{equation}
where the sum is over the primes less than or equal to $T$ and the error term converges to $0$ in probability when divided by $\log\log \log T$. 
\end{thm}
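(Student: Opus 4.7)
The plan is to identify an approximate branching-random-walk (BRW) structure inside $X(h) := \sum_{p \leq T} \re(U_p p^{-ih})/\sqrt{p}$ and then run sharp first- and second-moment analyses of the same type used in the BRW maximum proof. Set $K = \lfloor \log\log T \rfloor$ and decompose the primes into blocks $P_k = \{p : e^{e^{k-1}} < p \leq e^{e^k}\}$ for $k = 1, \ldots, K$ (the finitely many primes below $e$ contribute an $O(1)$ error). Writing $X_k(h) = \sum_{p \in P_k} \re(U_p p^{-ih})/\sqrt{p}$, the block sums are independent across $k$; each has variance $\approx 1/2$ by Mertens' theorem; and the two-point function $\E[X_k(h) X_k(h')] = \tfrac12 \sum_{p \in P_k} \cos((h-h')\log p)/p$ is $\approx 1/2$ for $|h-h'| \ll e^{-k}$ and negligible for $|h-h'| \gg e^{-k}$. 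On a grid $\G \subset [0,1]$ of spacing $1/\log T$, the process $(X(h))_{h \in \G}$ thus has the covariance structure of an $e$-ary BRW of depth $K$ with Gaussian steps of variance $\sigma^2 = 1/2$. Bramson's formula $m_K = \sigma\sqrt{2\log b}\,K - \tfrac{3\sigma}{2\sqrt{2\log b}}\log K$ with $b = e$ and $\sigma^2 = 1/2$ evaluates to exactly $\log\log T - \tfrac{3}{4}\log\log\log T$, matching \eqref{eq: main}.

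For the upper bound I would combine a Chernoff estimate at the critical tilt with a ballot (barrier) restriction. Since $|\re(U_p p^{-ih})| \leq 1$, the MGF factors as $\prod_p I_0(\beta/\sqrt{p}) = \exp(\tfrac{\beta^2}{4}\log\log T + O(1))$, matching a Gaussian of variance $\tfrac12 \log\log T$, so the critical tilt is $\beta_c = 2$. Fix $m = K - (\tfrac{3}{4} - \varepsilon)\log K$ and, for each $h \in \G$, intersect $\{X(h) \geq m\}$ with the barrier event $\{S_j(h) \leq jm/K + A \text{ for all } j \leq K\}$, where $S_j = \sum_{k \leq j} X_k$ and $A$ is a large constant. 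Exponential tilting at $\beta_c$ combined with a ballot estimate for the tilted walk (valid for the non-Gaussian increments via strong approximation) yields $\prob(X(h) \geq m,\,\text{barrier}) \lesssim e^{-K} K^{-2\varepsilon}$. The union bound over $|\G| \sim e^K$ points gives $O(K^{-2\varepsilon}) \to 0$, and a coarsening argument shows that any $h \in [0,1]$ with $X(h) \geq m$ lies within $1/\log T$ of a barrier-compliant grid point with probability $1 - o(1)$, closing the upper bound.

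The matching lower bound comes from a modified second-moment (Paley--Zygmund) argument. With $m' = K - (\tfrac{3}{4} + \varepsilon)\log K$, define
\[
\mathcal{A} = \bigl\{ h \in \G : X(h) \in [m', m'+1],\; S_j(h) \leq jm'/K + A \text{ for all } j \leq K \bigr\},
\]
so that $\E|\mathcal{A}| \to \infty$ by the same tilted computation. The barrier restriction is essential for the second moment: for $h, h' \in \G$ at distance $\sim e^{-j}$, $X_k(h)$ and $X_k(h')$ are essentially equal for $k \leq j$ and essentially independent for $k > j$; without the barrier the shared prefix would contribute a factor growing in $K$ that breaks Paley--Zygmund, whereas the barrier pins the shared portion to an atypically low value and restores $\E|\mathcal{A}|^2 \lesssim (\E|\mathcal{A}|)^2$. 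Paley--Zygmund then yields $\prob(\mathcal{A}\neq \emptyset) \geq c > 0$, and running the argument on many nearly-independent sub-intervals of $[0,1]$ boosts this to $\prob(\max_h X(h) \geq m') \to 1$.

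The principal technical obstacle is the replacement of standard Gaussian-BRW tools by their non-Gaussian analogues for the bounded increments $\re(U_p p^{-ih})/\sqrt{p}$. Two refined estimates are needed: (i) a Gaussian (Edgeworth) approximation for $X_k$ and the tilted partial sums, sharp at the scale of the density prefactor $K^{-1/2}$, which follows from the cumulant-summability bound $\sum_{p \leq T} p^{-n/2} = O(1)$ for $n \geq 3$; and (ii) a ballot/bridge estimate for the barrier event in the tilted measure, obtainable via a Komlós--Major--Tusnády coupling of the tilted partial sums to Brownian motion on the log scale. Passage from the discrete grid to the continuous supremum is comparatively straightforward, as $\partial_h X$ has variance $O(\log^2 T)$ and the spacing $1/\log T$ controls intermediate fluctuations up to $O(1)$.
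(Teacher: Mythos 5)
Your proposal is correct and follows essentially the same strategy as the paper: decompose the sum over primes into logarithmic blocks to expose an approximate branching-random-walk structure with branching factor $e$ (the paper uses base~$2$), then run a barrier-restricted first-moment argument for the upper bound and a barrier-restricted second-moment/Paley--Zygmund argument for the lower bound, relying on a Gaussian approximation of the tilted increments and a ballot theorem. The only tactical differences are that the paper uses a multivariate Berry--Esseen estimate (after cutting off the first $r = \lfloor(\log\log n)^2\rfloor$ scales to make the approximation error super-polynomially small) and a direct Paley--Zygmund bound that already goes to~$1$, whereas you propose KMT/Edgeworth and a boost to probability~$1$ via near-independent sub-intervals, and the paper's chaining argument in Section~\ref{sect: continuity} is sharper than your heuristic appeal to the variance of $\partial_h X$; these are implementation choices rather than a different route.
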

An outline of the proof of the theorem is given in Section \ref{sect: outline} below.
The technical tools needed are developed in Section \ref{sect: tools}, and finally the proof
is given in Section \ref{sect: proof}.

\subsection{Relations to previous results}
The leading order term $\log\log T$ in \eqref{eq: main} was proved in \cite{harper}, where it was also shown that the second order term must lie between $-2\log\log\log T$ and $-(1/4)\log\log\log T$. 
As well as giving a stronger result, our analysis here is ultimately based on a control of the joint distribution of only two points $h_1$ and $h_2$ of the random process at a time, which could feasibly be achieved for the zeta function itself. In contrast, the lower bound analysis in \cite{harper} depends on a 
Gaussian comparison inequality that requires control of $\log T$ points.

Fyodorov, Hiary and Keating motivated Conjecture \ref{conj: FHK} in \cite{fyodorov-hiary-keating, fyodorov-keating} using a connection to random matrices.
There is convincing evidence, see e.g.~\cite{keating-snaith}, that the values of the zeta function in an interval of the critical line are well modelled by the characteristic polynomial $P_N(x)$ of an $N\times N$ matrix sampled uniformly from the unitary group,
for $x=e^{i\theta}$ on the unit circle.
In this spirit, they compute in \cite{fyodorov-keating} the moments of the partition function $Z_N(\beta)=\int_{0}^{2\pi}|P_N(e^{i\theta})|^\beta d\theta$.
They argue that these coincide with those previously obtained for a logarithmically correlated Gaussian field \cite{fyodorov-bouchaud}. 
For large $\beta$, this leads to the conjecture that the maximum of the characteristic polynomial behaves like the maximum of the Gaussian model.
Unfortunately, the analogue of Conjecture \ref{conj: FHK} for this random matrix model is not known rigorously even to leading order
(see \cite{webb} for recent developments at low $\beta$ and its relation to Gaussian multiplicative chaos).
The conjecture is also expected to hold for other random matrix models such as the Gaussian Unitary Ensembles, see \cite{fyodorov-simms}.
One advantage of the model \eqref{eqn: process} is that it can be analysed rigorously to a high level of precision with current probabilistic techniques.

As explained in Section \ref{sect: outline}, the proof of Theorem \ref{thm: main} uses in a crucial way an {\it approximate tree structure} present in our model and also in the actual zeta function.
This structure explains the observed agreement between the high values of the zeta function and those of log-correlated random fields. 
The approach to control subleading orders of  log-correlated Gaussian fields and branching random walks was first developed by Bramson \cite{bramson} in his seminal work on the maximum of branching Brownian motion. 
It has since been extended to more general branching random walks by several authors, e.g.~\cite{addario-berry-reed-minima, aidekon, bramson-ding-zeitouni2}, and to log-correlated Gaussian fields, see for example \cite{bramson-ding-zeitouni, madaule}.
This type of argument can also be applied to obtain the joint distribution of the near-maxima, see e.g.~\cite{aidekon-berestycki-brunet-shi, arguin-bovier-kistler, biskup-louidor}.
Recently, a multiscale refinement of the second moment method was introduced by Kistler in \cite{kistler} to control the leading and subleading orders of processes with neither {\it a priori} Gaussianity nor exact tree structure.
It was successfully implemented in \cite{belius-kistler} to obtain the subleading order of cover times on the two-dimensional torus.
The proof of Theorem \ref{thm: main} follows the same approach.

It is instructive to consider the conjecture in the light of the statistics of typical values of the zeta function.
One beautiful result is the {\it Selberg central limit theorem}~\cite{selberg}, which asserts that if $\tau$ is sampled uniformly from the interval $[0,T]$ then
$(\frac{1}{2}\log\log T)^{-1/2}\log |\zeta(1/2+i\tau)|$ converges in law to a standard Gaussian variable.
Thus, to obtain a rough prediction for the order of the maximum on $[0,1]$, one may
compare it to the maximum of independent Gaussian variables of mean $0$ and variance $\frac{1}{2}\log\log T$.
For $\log T$ such variables, it is not hard to show that the order of the maximum is $\log \log T - \frac{1}{4} \log \log \log T + O(1)$.
The leading order agrees with Conjecture \ref{conj: FHK}, but the constant in the subleading correction is different.
Our proof shows how to modify this ``independent'' heuristic to account for the ``extra'' $-\frac{1}{2}\log \log \log T$
present in Conjecture \ref{conj: FHK}.
Bourgade showed a multivariate version of Selberg's theorem  where the correlations are logarithmic in the limit \cite{bourgade}.
However, the convergence is too weak to describe the maximum on an interval.

\subsection{Outline of the proof}
\label{sect: outline}

The proof of Theorem \ref{thm: main} is based on an analogy between the process \eqref{eqn: process} and a branching random walk (also known as hierarchical random field). 
We make this connection precise here, and indicate for an unfamiliar reader how to analyse the maximum of a branching random walk.

We will work in the case where $T=e^{2^n}$ for some large natural number $n$.
In this setup, the process of interest in Theorem \ref{thm: main} is
\begin{equation}\label{eq: X}
(X_n(h), h\in [0,1])\ ,\qquad \text{ where }\  X_n(h)=\sum_{p\leq e^{2^n}}\frac{\re (U_p p^{-ih})}{p^{1/2}}\ 
\end{equation}
is a continuous function of $h$.
Since $\log \log T = n \log 2$ and $\log \log \log T = \log n + O(1)$,
Theorem \ref{thm: main} can be restated as:
\begin{eqnarray}
&& \displaystyle{\lim_{n \to \infty}} \prob\left[ m_n(-\varepsilon) \le \max_{h\in [0,1]} X_n(h) \le m_n(\varepsilon) \right] = 1 \mbox{, for all }\varepsilon>0,\label{eq: main in terms of X}\\
&&   \text{where } m_n(\varepsilon)=n\log 2 -\frac{3}{4}\log n +\e \log n\ .\label{eqn: m_n definition}
\end{eqnarray}
In other words, with large probability, the maximum of the process lies in an arbitrarily small window (of order $\log n$) around $n\log 2 -\frac{3}{4}\log n$.
  
By symmetry of $U_p$ we have $\E[X_{n}(h)] = 0$
for any $h \in [0,1]$.
Also a simple computation shows that $\E [ \re (U_p p^{-ih})\re (U_p p^{-ih'})] = (1/2)\cos(|h-h'| \log p)$,
so the covariance $\E[X_n(h)X_n(h')]$ equals $\frac{1}{2}\sum_{\log p\leq 2^n} p^{-1}\cos( |h-h'| \log p)$.
Using well known results on primes (cf.~Lemma \ref{lem: primes}), it is possible to estimate this as
\begin{equation}
\label{eq: covar} 
  \E  \left[ X_n(h)X_n(h') \right] \approx \frac{1}{2}\log |h-h'|^{-1},
\end{equation}
for any $h,h' \in [0,1]$ provided $|h-h'| \geq 2^{-n}$. 
If instead $|h-h'| < 2^{-n}$, then the covariance is almost $n(\log 2)/2$, i.e. $X_n(h)$ and $X_n(h')$ are almost perfectly correlated.
Therefore, one can think of the maximum over $h \in [0,1]$ as a maximum over $2^{n}$ equally spaced points.

The key point of the proof is that the logarithmic nature of the correlations can be understood in a more structural way using a {\it multiscale decomposition}.
Precisely, we rewrite the process as
\begin{equation}
\label{eqn: Y}
X_n(h)=\sum_{k=0}^n Y_k(h),  \;\;\; \text{where} \; Y_k(h)=\sum_{2^{k-1}< \log p \leq 2^k} \frac{\re (U_p p^{-ih})}{p^{1/2}}  \ ,
\end{equation}
is the {\it increment at ``scale'' $k$} of $X_n(h)$.
It is not hard to show, see Section \ref{sect: ld}, that for $k$ large,
\begin{equation}
\label{roughcorr}
\begin{aligned}
 \E  \left[ Y_k(h)^{2} \right] &
 \approx \frac{\log 2}{2}, \quad\mbox{ and}\quad
\E  \left[ Y_k(h)Y_k(h') \right] &
\approx 
\begin{cases}
\frac{\log 2}{2}  & \text{ if $|h-h'| \leq 2^{-k}$},  \\
0 & \text{ if $|h-h'| > 2^{-k}$.}
\end{cases}
\end{aligned}
\end{equation}
In view of \eqref{roughcorr}, for given $h,h'$, one can think of the partial sums $X_k(h) =\sum_{j=1}^k Y_j(h)$ and $X_k(h') =\sum_{j=1}^k Y_j(h')$ as random walks,
where the increments $Y_j(h), Y_j(h')$ are almost perfectly correlated (so roughly the same) for those $j$
such that $2^{j} \leq |h - h'|^{-1}$, and where they are almost perfectly decorrelated (so essentially independent) when $2^{j} > |h - h'|^{-1}$.
A similar, but exact, behaviour would be obtained as follows:
Consider $2^n$ equally spaced
points in $[0,1]$, thought of as leaves of a binary tree of depth $n$.
Place on each edge of the binary tree an independent Gaussian with mean zero and variance $(\log 2)/2$, and associate to a leaf the random walk given by the partial sums of the Gaussians on the path from root to leaf, see Figure \ref{fig: tree}.
With this construction, the first $k$ increments of the random walks of two leaves will be exactly the same, where $k$ is the level of the most recent common ancestor, and the rest of the increments will be perfectly independent.
This tree construction is an example of branching random walk.
For the model \eqref{eq: X} of zeta, the {\it branching point} $k$ where the paths $X_k(h)$ and $X_k(h')$ roughly decorrelate is
\begin{equation}
\label{eqn: branching}
h\wedge h'=\lfloor \log_2 |h-h'|^{-1}\rfloor\ .
\end{equation}
So $h$ and $h'$ correspond to leaves whose most recent common ancestor is in level $k=h\wedge h'$.
We note that the different nature of the correlations for different ranges of $p$ was already exploited in early work of Hal\'{a}sz \cite{halasz}, although without drawing any connection to branching.

\begin{figure}[h]
\hspace{-2cm}
\begin{subfigure}{0.3\textwidth}
\includegraphics[scale=0.42]{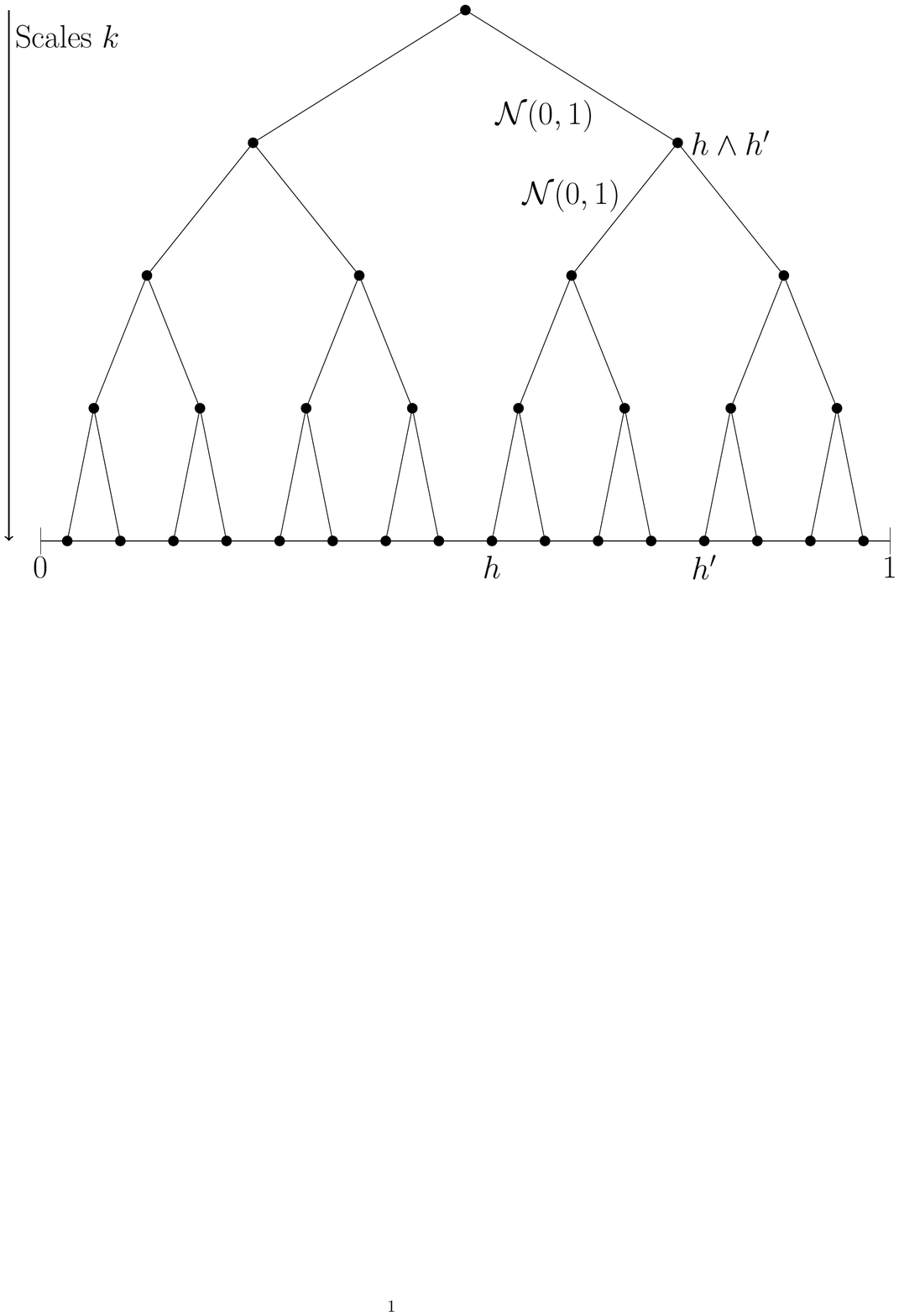}
\vspace{1cm}
\end{subfigure}
\hspace{0.1\textwidth}
\begin{subfigure}{0.5\textwidth}
\includegraphics[scale=0.6]{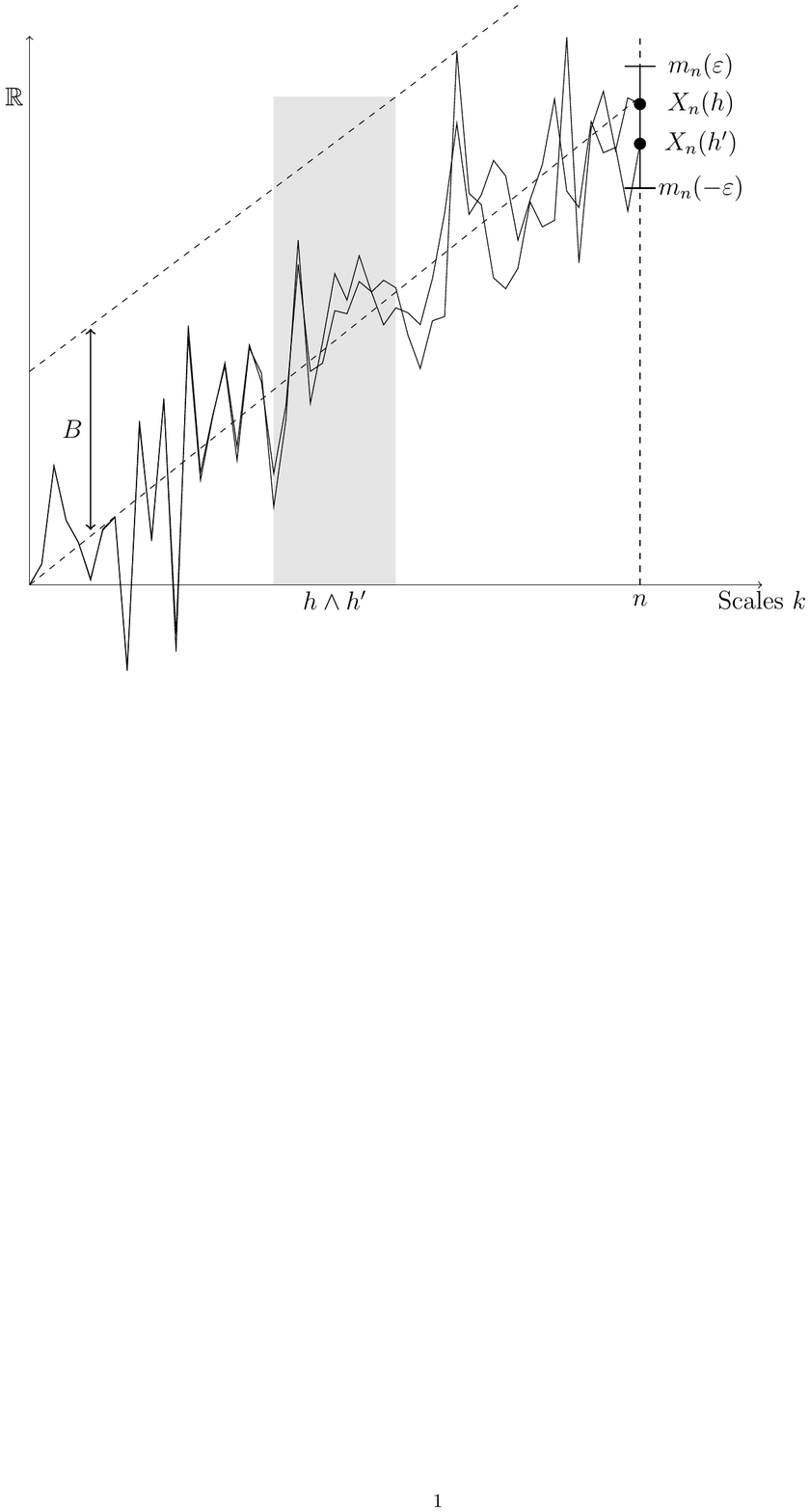}
\end{subfigure}
\vspace{-8.5cm}
\caption{(Left) An illustration of the correlation structure of a branching random walk. 
(Right) A realization of two paths of an approximate branching random walk with increments almost equal before the branching point $h\wedge h'$ and almost decoupled after. 
The barrier below which the paths must stay is also shown.}
\label{fig: tree}
\captionsetup[figure]{position=b}
\end{figure}

A compelling method to analyse the maximum of a branching random walk and of log-correlated processes in general
is a {\it multiscale refinement of the second moment method} as proposed in \cite{kistler}, 
which we implement to the approximate branching setting described above.
Naively, one could first consider the number of variables whose value exceeds a given value $m$, i.e. the {\it number of exceedances},
\begin{equation}
\label{eqn: Z}
Z(m)=\#\{j\leq 2^n: X_n(j/2^n)\geq m\}\ .
\end{equation}
Clearly, $\max_{j\leq 2^n} X_n(j/2^n)\geq m$ if and only if $Z(m)\geq 1$.
Thus an upper bound for the maximum can be obtained by the union bound
\begin{equation}
\label{eqn: markov}
\prob(Z(m)\geq 1)\leq \E[Z(m)]=2^n \prob(X_n(0)\geq m)\ .
\end{equation}
On the other hand, a lower bound can be obtained by the Paley--Zygmund inequality,
\begin{equation}
\label{eqn: PZ}
\prob(Z(m) \geq 1)\geq \frac{\E[Z(m)]^2}{\E[Z(m)^2]}\ .
\end{equation}
More precisely, one would choose $m=m(n)$ large enough in \eqref{eqn: markov} so that $\E[Z(m)]=o(1)$, and $m$ small enough in 
\eqref{eqn: PZ} so that $\E[Z]^{2} =(1+o(1)) \E[Z^2]$, and thus $\prob(Z(m)\geq 1)=1+o(1)$.
For this one needs large deviation estimates: if we think of $X_n(h)$ as Gaussian with variance $n\log 2 /2$, then a standard Gaussian estimate
yields that $P( X_n(h) \ge m)$ is approximately $\frac{\sqrt{n}}{m}e^{-m^2/((\log 2) n)}$.
Thus $2^n P( X_n(0) \ge m)=o(1)$ when
$m=(\log 2)n - \frac{1}{4}\log n + \e \log n$.
This would in fact be the correct answer (the union bound would be sharp) if the random variables $X_n(j/2^n)$ were independent.
However, if $m=(\log 2)n - \frac{3}{4}\log n + \e \log n$, then $2^m\prob( X_n(0) \ge m) \ge cn^{1-\e} \to \infty$, so \eqref{eqn: markov} cannot prove the upper bound we seek in Theorem \ref{thm: main}. Similarly, the right-hand side of \eqref{eqn: PZ} will tend to zero unless $m \le \frac{ \log 2}{2} n$,
since strong correlation between exceedance events for nearby $h, h'$ inflates the second moment. Thus the lower bound
obtained is not close to what we seek even to leading order.

To get tight bounds, one needs to modify the definition of the number of exceedances using an insight
from the underlying approximate tree structure. 
For branching random walk there are exactly $2^k$ distinct partial sums up to the $k$-level, one for each vertex at that level.
By analogy one expects that the ``variation'' in $X_k(h)$ (i.e. in the partial sums up to the $k$-th level) for different $h \in [0,1]$
should be captured by just $2^k$ equally spaced points in $[0,1]$.
Even if they were independent, it would be very unlikely that one of these $2^k$ values exceeded $k\log 2+B$, for $B>0$ growing slowly with $n$,
and it turns out that positive correlations only make it less likely. This can be proved using elementary arguments, cf. Lemma \ref{lem: construction fo barrier}.
In other words,  with high probability, all random walks $X_k(h)$ must lie below the {\it barrier} $k\mapsto k\log 2 +B$.
This suggests to look at the modified number of exceedances
 \begin{equation}
\label{eqn: Z modif}
\widetilde Z(m)=\#\{j\leq 2^n: X_n(j/2^n)\geq m, X_k(j/2^n)<k\log 2 +B\ ,  \forall k\leq n\}\ .
\end{equation}
It turns out that replacing $Z$ by $\widetilde Z$ in the first moment bound \eqref{eqn: markov} and
(with slight modifications) in the second moment bound \eqref{eqn: PZ} will yield the correct answer.
To see this in the former case, we write the first moment by conditioning on the end point:
\begin{equation}
\E[\widetilde Z(m)]=2^n \prob(X_n(0)>m) \times \prob(X_k(0)<k\log 2 +B\ ,  \forall k\leq n| X_n(0)>m)\ .
\end{equation}
By the earlier naive discussion, the first two terms amount to $O(n^{1-\e})$ when we set $m=n\log 2 - \frac{3}{4}\log n + \e \log n$. 
The third term is the probability that a
{\it random walk bridge} starting at $0$ and ending at $m=n\log 2 - \frac{3}{4}\log n + \e \log n$ avoids the barrier $k\log 2 +B$.
This probability turns out to be $n^{-1}$, as shown by the {\it ballot theorem}, cf.~Lemma \ref{lem:ballot theorem}.
Therefore, $\E[\widetilde Z(m)]=O(n^{-\e}) \to 0$, for all $\varepsilon>0$.
A similar analysis can be done for the lower bound, where we have the obvious inequality $\prob(Z(m)\geq 1)\geq \prob(\widetilde Z(m)\geq 1)$.
The extra barrier condition turns out to reduce correlations between exceedance events sufficiently so that the second moment is now essentially
the first moment squared when $m \le n\log 2 - \frac{3}{4}\log n - \e \log n$ (this indicates why the second moment of $Z(m)$ is too large:
in the exponentially unlikely event that a path manages to go far above the barrier, it has exponentially many ``offspring'' that end up far above the
typical level of the maximum).

The form of the subleading correction is thus explained by the extra ``cost'' $n^{-1}$ of satisfying the barrier condition. And the barrier
condition arises because of ``tree-like'' correlations present in the values of (the model of) the zeta function. This suggests the
possibility that the partial sums of the Euler product \eqref{eqn: Euler product taylor} of the actual zeta function behave similarly where the zeta function is large.

To prove Theorem \ref{thm: main}, we must address the imprecisions in the above discussion.
The necessary large deviation estimates are derived in Section \ref{sect: ld}.
The claim that $X_k(h)$ does not vary much below scale $2^{-k}$ is proved in Section \ref{sect: continuity} using a {\it chaining argument}.
Another issue is that our process is not an exact branching random walk because increments are never perfectly independent (for different $h,h'$) nor exactly identical.
To deal with this, we use a Berry--Esseen approximation in Section \ref{sect: gaussian} to show that the random walks are very close to being Gaussian.
This allows for an explicit comparison with Gaussian random walks with i.i.d.~increments and ``perfect'' branching.
Moreover, to get a sharp lower bound with the second moment method, it is necessary to
``cut off the first $r$ scales'' and consider
\begin{equation}
\label{eqn: process cutoff}
X_{r,k}(h)= X_k(h)-X_r(h) \qquad \text{for $h\in \R$, }
\end{equation}
for an appropriately chosen $r$.
Finally, it should be stressed that our approach relies only on controlling first and second moments, which means that the estimates we need
only involve at most two random walks simultaneously.

\section{Preliminaries}
\label{sect: tools}
Throughout the paper, we will write $c$ for absolute constants whose value may change at different occurrences.
A sum over the variable $p$ always denotes a sum over primes.

\subsection{Large Deviation Estimates}
\label{sect: ld}
In this section, we derive the large deviation properties of the increments $(Y_k(h), h\in [0,1])$ and their sum.
We first derive basic facts on their distribution and  in particular on their correlations.

Recall that the random variables  $(U_p, p \text{ primes})$ are i.i.d.~and uniform on the unit circle.
For simplicity, we denote the $p$-th term of the sum over primes in \eqref{eq: main} by,
\begin{equation}\label{eq: W_p def}
  W_p(h)=\frac{\re (U_pp^{-ih})}{p^{1/2}}, h \in \mathbb{R}.
\end{equation}
Note that the law of the process $(W_p(h), h\in \R)$ is translation-invariant on the real line and also invariant under the reflection $h \mapsto -h$. 
A straightforward computation using the law of the $U_p$'s and translation invariance gives
\begin{equation}
\label{eq: variance and covariance of Wps}
 \E\left[W_p(h)W_p(h') \right] = \frac{1}{2p} \cos(|h-h'|\log p) , \text{ for all $h,h'$. }
\end{equation}

In this notation, the increments defined in \eqref{eqn: Y} are
\begin{equation}
\label{eq: Y_k def}
Y_k(h)=\sum_{2^{k-1}< \log p \leq 2^k} W_p(h), h \in \mathbb{R}.
\end{equation}
Using \eqref{eq: variance and covariance of Wps} and the independence of the $U_p$'s, the variance of $Y_k(h)$ becomes
\begin{equation}
\label{eqn: variance}
\sigma_k^2=\text{Var}(Y_k(h))=\sum_{2^{k-1} <\log p \leq 2^k}
\frac{1}{2p}\ ,
\end{equation}
and the covariance of $Y_k(h)$ and $Y_k(h')$ is,
\begin{equation}
 \rho_k(h,h')=\E[Y_{k}(h) Y_{k}(h')] = \sum_{2^{k-1}<\log p\le2^k} \frac{1}{2p}\cos(|h-h'|\log p).
\end{equation}

The next lemma formalizes \eqref{roughcorr}, giving bounds for how close the variance of the increments is to
\begin{equation}\label{eq: changed variance}
  \sigma^2 = (\log 2)/2\ ,
\end{equation}
and for $h\neq h'$, how close the covariance is to the variance before the "branching point" $h\wedge h'$, defined in \eqref{eqn: branching}, 
and how fast it decays after. 
\begin{lem}
\label{lem: primes}
For $h,h'\in \R$ and $k\geq 1$, 
\begin{equation} 
\label{eqn: sigmak estimate}
\sigma_k^2=\E \left[ Y_{k}(h)^{2} \right] = \sigma^2 + O\left(e^{-c\sqrt{2^{k}}}\right),
\end{equation}

\begin{equation}
\label{eqn: correlation estimates}
\rho_k(h,h')=\E \left[ Y_{k}(h) Y_{k}(h') \right]  = 
\begin{cases}
\sigma^2 + O\left( 2^{-2(h\wedge h'-k)} \right) + O\left(e^{-c\sqrt{2^{k}}}\right) &\text{ if $k\leq h\wedge h'$,}\\
O\left( 2^{-(k- h\wedge h')} \right)  &\text{ if $k> h\wedge h'$.}
\end{cases}
\end{equation}
\end{lem}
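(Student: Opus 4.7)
The plan is to reduce both estimates to sums over primes in the window $(e^{2^{k-1}},e^{2^k}]$, then invoke the prime number theorem with its classical error term together with partial summation. Throughout I write $\Delta=|h-h'|$, so that by \eqref{eqn: branching} one has $2^{-(h\wedge h')-1}<\Delta\le 2^{-h\wedge h'}$.

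For the variance \eqref{eqn: sigmak estimate}, I would appeal to the sharp form of Mertens' theorem
\begin{equation*}
\sum_{p\le x}\frac{1}{p}=\log\log x+M+O\bigl(e^{-c\sqrt{\log x}}\bigr),
\end{equation*}
which is a standard consequence of PNT with the classical error term. Applying this at $x=e^{2^k}$ and $x=e^{2^{k-1}}$ and subtracting, the $\log\log x$ terms collapse to $\log 2$, yielding $\sigma_k^2=\sigma^2+O(e^{-c\sqrt{2^k}})$ after dividing by $2$.

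For the covariance when $k\le h\wedge h'$, the key observation is that $\Delta\log p\le 2^{-(h\wedge h')}\cdot 2^k=2^{k-h\wedge h'}\le 1$ uniformly over primes in the window. I would Taylor expand the cosine as $1-\tfrac12(\Delta\log p)^2+O((\Delta\log p)^4)$ to obtain
\begin{equation*}
\rho_k(h,h')=\sigma_k^2-\frac{\Delta^2}{4}\sum_{2^{k-1}<\log p\le 2^k}\frac{(\log p)^2}{p}+O\!\left(\Delta^4\sum_{2^{k-1}<\log p\le 2^k}\frac{(\log p)^4}{p}\right).
\end{equation*}
Partial summation against PNT gives $\sum_{2^{k-1}<\log p\le 2^k}(\log p)^j/p=O(2^{jk})$ for each $j\ge 1$, so the correction terms sum to $O(\Delta^2 2^{2k})=O(2^{-2(h\wedge h'-k)})$. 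Combined with the variance estimate, this matches the claim.

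The main obstacle is the regime $k>h\wedge h'$, where $\Delta 2^{k-1}\ge 2^{k-h\wedge h'-2}\ge\tfrac14$, so the cosine genuinely oscillates and Taylor expansion is useless; I must instead exploit cancellation. I would apply partial summation using PNT in the form $\pi(t)=\mathrm{Li}(t)+O(te^{-c\sqrt{\log t}})$ to convert the prime sum to a smooth integral. After substituting $u=\log t$ and then $v=\Delta u$, this reduces to
\begin{equation*}
\rho_k(h,h')=\frac{1}{2}\int_{\Delta 2^{k-1}}^{\Delta 2^k}\frac{\cos v}{v}\,dv+O\bigl(e^{-c\sqrt{2^k}}\bigr),
\end{equation*}
and a single integration by parts (using $\cos v=(d/dv)\sin v$) bounds the remaining integral by $O(1/(\Delta 2^{k-1}))=O(2^{-(k-h\wedge h')})$. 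The delicate point is controlling the PNT error inside the partial summation without destroying the oscillatory decay; however, since $h,h'\in[0,1]$ forces $h\wedge h'\ge 0$ and hence $2^{-(k-h\wedge h')}\ge 2^{-k}\gg e^{-c\sqrt{2^k}}$ for $k$ large, even the crude triangle-inequality bound on the PNT error is dominated by the main estimate, so a second integration by parts is not strictly needed and the argument closes.
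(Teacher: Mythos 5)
Your proposal is correct and follows essentially the same route as the paper: Mertens/PNT for the variance, a Taylor expansion of the cosine in the regime $k\le h\wedge h'$ (you work directly with the prime sum and the bounds $\sum(\log p)^j/p=O(2^{jk})$, whereas the paper first replaces the sum by an integral, but the two are interchangeable), and a change of variables plus one integration by parts to exploit oscillation in the regime $k>h\wedge h'$. Your closing observation that the PNT error $O((1+|h-h'|)e^{-c\sqrt{2^{k-1}}})$ is swamped by $2^{-(k-h\wedge h')}$ when $h\wedge h'\ge 0$ correctly disposes of the only delicate bookkeeping point, and is consistent with how the paper treats that error.
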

Note that in both cases the error term decays exponentially in $k$.
\begin{proof}
We use a strong form of the Prime Number Theorem (see Theorem 6.9 of \cite{montgomery-vaughan-multiplicative-nt})
which states that
\begin{equation}\label{eq: PNT}
 \#\{p \le x: p \mbox{ prime}\} = \int_2^{x} \frac{1}{\log u}du + O( x e^{-c \sqrt{ \log x }} ).
\end{equation}
By replacing the sum $\sum_{P<p\le Q}\frac{1}{p}$ with the integral $\int_P^Q \frac{1}{u\log u}du$ 
using \eqref{eq: PNT} and integration by parts, one obtains
$$
\sum_{P<p\le Q}\frac{1}{p} = \log\log Q-\log\log P +O(e^{-c\sqrt{\log P}}), \mbox{ for all }2 \le P \le Q.
$$
This together with \eqref{eqn: variance} yields
\eqref{eqn: sigmak estimate}. Similarly \eqref{eq: PNT} implies that
$$
  \rho_k(h,h') = \frac{1}{2} \int_{e^{2^{k-1}}}^{e^{2^k}} \frac{\cos( |h-h'|\log u )}{u \log u} du + O\left( (1+|h-h'|)e^{-c\sqrt{2^{k-1}}} \right).
$$
When $2^{k}|h-h'|=2^{k-h\wedge h'}\leq 1 $, the claim \eqref{eqn: correlation estimates} follows by using that $\cos(|h-h'|\log u) = 1 + O(|h-h'|^2(\log u)^2)$.
When $2^{-k} |h- h'|^{-1}=2^{-k+h\wedge h'}<1$, we use integration by parts. After the change of variable $v=\log u$, the integral becomes
$$
\frac{\sin(|h-h'| v)}{ |h-h'| v}\Big|_{2^{k-1}}^{2^k}
+
\int_{2^{k-1}}^{2^k} \frac{\sin( |h-h'|v )}{ |h-h'|v^2} du\ .
$$
Both terms are $O(2^{-k} |h-h'|^{-1})$.

\end{proof}

\begin{rem}
 A similar but easier argument using \eqref{eq: PNT} shows that
       \begin{equation}\label{eq: logp over p sum}
         \sum_{P<p\le Q}\frac{\left(\log p\right)^{m}}{p} = O((\log Q)^m), \mbox{ for all } 1 \le P \le Q.
       \end{equation}
\end{rem}

The main results of this section are explicit expressions for the cumulant generating functions of the increments, from which we will deduce large deviation estimates.
For fixed $h, h'\in \mathbb{R}$, we will often drop the dependence on $h$ and $h'$ when it is clear from context and define
$$
\vec{Y}_k= \big( Y_k(h),Y_k(h')\big) \ .
$$
The covariance matrix of $\vec{Y}_k$ is then denoted by
$$
\vec{\Sigma}_{k}=\text{Cov}(\vec{Y}_k)=\left(\begin{array}{cc}\sigma_k^2 & \rho_k \\ \rho_k & \sigma_k^2\end{array}\right)\ .
$$
The eigenvalues of $\Sigma_k$ are $\sigma_k^2\pm \rho_k$. 

The cumulant generating functions are
\begin{equation}
\label{eqn: psi}
\begin{aligned}
\psi^{(1)}_k(\lambda)=\log \E[\exp (\lambda Y_k )]\qquad
\psi^{(2)}_k(\vec{\lambda})= \log \E[\exp ( \vec{\lambda}\cdot \vec{Y}_k ) ]\ ,
\end{aligned}
\end{equation}
where $\lambda\in \R$, $\vec{\lambda}\in\R^2$ and $``\cdot"$ is the inner product in $\R^2$.
The following change of measure will also be needed in the proof of Theorem \ref{thm: main}:
\begin{equation}
\label{eqn: RN1 and RN2}
\frac{d\mathbb{Q}_{\lambda}}{d\mathbb{P}}=
\prod_{k=1}^n \frac{e^{\lambda Y_k}}{e^{\psi^{(1)}_k(\lambda)}}\ \mbox{ for }\lambda \in \mathbb{R},
 \quad \quad \quad
\frac{d\mathbb{Q}_{\vec{\lambda}}}{d\mathbb{P}}=
 \prod_{k=1}^n
\frac{e^{\vec{\lambda}\cdot \vec{Y}_k}}{e^{\psi^{(2)}_k(\vec{\lambda})}}\ \mbox{ for } \vec{\lambda}\in \R^2.
\end{equation}
Recall that in the univariate case,
\begin{equation}
\label{eqn: deriv Q1}
\mathbb Q_\lambda[Y_k]= \frac{d}{d\lambda}\psi_k^{(1)}(\lambda), \qquad \text{Var}_{\mathbb Q_\lambda}(Y_k)=  \frac{d^2}{d\lambda^2}\psi_k^{(1)}(\lambda) \ ,
\end{equation}
and in the multivariate case,
\begin{equation}
\label{eqn: deriv Q2}
\mathbb Q_{\vec{\lambda}}[\vec{Y}_k]= \nabla\psi_k^{(2)}(\vec{\lambda}), \qquad \text{Cov}_{\mathbb Q_{\vec{\lambda}}}(\vec{Y}_k)=  \text{Hess} \ \psi_k^{(2)}(\vec{\lambda})\ .
\end{equation}
The results also provide bounds on these quantities. 
We first state the result for the univariate case. The proof is omitted since it is a special case of the multivariate
bound in Proposition \ref{prop: two point ld}. 
\begin{prop}
\label{prop: one point cgf}
Let $C>0$. For all $0<\lambda<C$ and $k$ large enough (depending on $C$),
the cumulant generating function $\psi_{k}^{\left(1\right)}\left(\lambda\right)$ satisfies
\begin{equation}
\label{eqn: one point cgf}
\psi_{k}^{\left(1\right)}\left(\lambda\right)=\frac{\lambda^2\sigma_k^2}{2}+O\left(e^{-2^{k-1}}\right)\ .
\end{equation}
Moreover, for such $k$, the measure $\mathbb{Q}_{\lambda}$ in \eqref{eqn: RN1 and RN2} satisfies
\begin{equation}
\label{eqn: one point mean and variance}
\mathbb{Q}_{\lambda}\left[Y_{k}\right]=\lambda \sigma_k^2 +O\left(e^{-2^{k-1}}\right)
,\qquad 
Var_{\mathbb{Q}_{\lambda}}\left[Y_{k}\right]=\sigma_{k}^{2}+O\left(e^{-2^{k-1}}\right)\ .
\end{equation}
\end{prop}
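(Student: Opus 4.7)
The plan is to exploit the independence of the $U_{p}$'s to factor the moment generating function over the dyadic block of primes, and then to Taylor expand each factor using the fact that $\lambda W_{p}(h)$ is extremely small on the relevant range. Since the cumulants are translation invariant in $h$, I may fix $h=0$.

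First I would write, by independence,
\[
\psi_{k}^{(1)}(\lambda) = \sum_{2^{k-1} < \log p \le 2^{k}} \log \E\!\left[e^{\lambda W_{p}(0)}\right].
\]
Because $U_{p}$ is uniform on the unit circle, $W_{p}(0)$ has the law of $\cos(\theta)/\sqrt{p}$ with $\theta$ uniform on $[0,2\pi]$, so the one-variable m.g.f.\ is the modified Bessel function
\[
\E\!\left[e^{\lambda W_{p}(0)}\right] = I_{0}(\lambda/\sqrt{p}) = 1 + \frac{\lambda^{2}}{4p} + \frac{\lambda^{4}}{64\, p^{2}} + \cdots,
\]
with the series converging very rapidly. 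For $p > e^{2^{k-1}}$ and $|\lambda|\le C$, we have $\lambda/\sqrt{p}\le Ce^{-2^{k-2}}$, so $\log(1+x)=x+O(x^{2})$ gives
\[
\log I_{0}(\lambda/\sqrt{p}) = \frac{\lambda^{2}}{4p} + O\!\left(\frac{1}{p^{2}}\right),
\]
with the implicit constant depending only on $C$. Summing in $p$ over the dyadic block, the main term contributes $\tfrac{\lambda^{2}}{2}\sum 1/(2p)=\lambda^{2}\sigma_{k}^{2}/2$ by \eqref{eqn: variance}, while the error is controlled by
\[
\sum_{p > e^{2^{k-1}}} \frac{1}{p^{2}} \;\le\; \sum_{n > e^{2^{k-1}}} \frac{1}{n^{2}} \;=\; O\!\left(e^{-2^{k-1}}\right),
\]
which proves \eqref{eqn: one point cgf}.

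For the mean and variance under $\mathbb{Q}_{\lambda}$, by \eqref{eqn: deriv Q1} it suffices to obtain the analogous expansion for the first two derivatives of $\psi_{k}^{(1)}$. The direct route is to differentiate term-by-term, using $I_{0}'=I_{1}$ and $I_{1}(x)/I_{0}(x)=x/2+O(x^{3})$, giving $\tfrac{d}{d\lambda}\log I_{0}(\lambda/\sqrt{p})=\lambda/(2p)+O(1/p^{2})$ and an analogous expression for the second derivative; summing in $p$ yields \eqref{eqn: one point mean and variance}. Alternatively, since $\psi_{k}^{(1)}$ is entire in $\lambda$ and the Bessel series converges on all of $\mathbb{C}$, Cauchy's estimates on a disc of radius $1$ around any $\lambda\in(0,C)$ transfer the $O(e^{-2^{k-1}})$ bound on $\psi_{k}^{(1)}(\lambda)-\lambda^{2}\sigma_{k}^{2}/2$ to its first two derivatives for free.

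There is no serious obstacle: the only thing to be careful about is uniformity in $\lambda$, but since the range $0<\lambda<C$ is bounded and the smallness of each factor is driven by the lower cutoff $p>e^{2^{k-1}}$, all constants depend only on $C$. This is exactly why the authors defer the proof to the two-point version in Proposition \ref{prop: two point ld}, where the analogous argument is carried out for a pair $(Y_{k}(h),Y_{k}(h'))$ and must additionally keep track of the off-diagonal term $\rho_{k}(h,h')$ controlled by Lemma \ref{lem: primes}.
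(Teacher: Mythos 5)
Your proof is correct and follows essentially the same route as the paper: the paper omits the one-point proof because it is the $\lambda' = 0$ specialization of the two-point computation in Proposition~\ref{prop: two point ld}, which likewise factors the moment generating function over primes, identifies each factor with $I_0$ via the Bessel integral identity \eqref{eq: Bessel Identity}, Taylor-expands $\log I_0$, and sums using $\sum_{p>e^{2^{k-1}}}p^{-2}=O(e^{-2^{k-1}})$. Your primary route for \eqref{eqn: one point mean and variance} (differentiating term by term via $I_1/I_0$) mirrors the paper's gradient/Hessian bounds on $f(x)=\log I_0(\sqrt{2x})$; your Cauchy-estimate alternative is a valid and slightly slicker way to get the derivative bounds for free, provided one notes that the $O(e^{-2^{k-1}})$ estimate on $\psi_k^{(1)}(\lambda)-\lambda^2\sigma_k^2/2$ extends to complex $|\lambda|\le C+1$ (which it does, since $\log I_0$ is holomorphic near $0$ and $|\lambda|/\sqrt{p}$ is tiny on the block).
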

One useful consequence of the proposition is a one-point large deviation estimate,
which after being strengthened to a bound
for the maximum over a small interval, will be a crucial
input to the proof of the upper bound of Theorem \ref{thm: main} (see \eqref{eqn: chaining} and \eqref{eq: after tail bound}).
Recall from \eqref{eqn: process cutoff} that $X_{r,k}(h) = X_{k}(h) - X_{r}(h) = \sum_{l=r+1}^{k} Y_{l}(h)$.
\begin{cor}
\label{cor: ld one point}
Let $C>0$. For any $0 \leq r \leq k-1$, $0<x<C(k-r)$ and $h\in \mathbb{R}$,
\begin{equation}
\label{eq: one point ld}
\mathbb{P}\left[X_{r,k}\left(h\right)> x\right]\leq c \exp\left(-\frac{x^{2}}{2(k-r)\sigma^2 } \right),
\end{equation}
where the constant $c$ depends on $C$. 
\end{cor}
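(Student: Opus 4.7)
The proof is a standard Chernoff (exponential Markov) argument based on Proposition \ref{prop: one point cgf}. The crucial starting observation is that the increments $Y_{r+1}(h), \ldots, Y_k(h)$ are \emph{independent}, since the dyadic ranges $\{p : 2^{l-1} < \log p \le 2^l\}$ are disjoint and the random phases $(U_p)$ are independent. Hence for any $\lambda > 0$,
\[
\mathbb{P}[X_{r,k}(h) > x] \;\le\; e^{-\lambda x}\,\E[e^{\lambda X_{r,k}(h)}] \;=\; \exp\!\left(-\lambda x + \sum_{l=r+1}^{k} \psi_l^{(1)}(\lambda)\right).
\]

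Next I would control this sum of cumulants. Set $C' := C/\sigma^{2}$, and let $L_0 = L_0(C')$ be the threshold from Proposition \ref{prop: one point cgf}. For $l \ge L_0$ and $\lambda \in (0, C')$, the proposition together with the variance estimate $\sigma_l^2 = \sigma^2 + O(e^{-c\sqrt{2^l}})$ of Lemma \ref{lem: primes} gives $\psi_l^{(1)}(\lambda) = \lambda^2 \sigma^2/2 + O(\lambda^{2} e^{-c\sqrt{2^l}}) + O(e^{-2^{l-1}})$, whose error terms are summable in $l$. For the finitely many $l < L_0$, each $Y_l(h)$ is a finite sum of bounded terms, so $\psi_l^{(1)}(\lambda)$ is uniformly bounded on $(0, C')$ by a constant depending only on $C$. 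Combining,
\[
\sum_{l=r+1}^{k} \psi_l^{(1)}(\lambda) \;\le\; \frac{\lambda^2 (k-r)\sigma^2}{2} + O(1), \qquad 0 < \lambda < C'.
\]

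Finally, I would optimize by taking $\lambda := x/((k-r)\sigma^2)$. The hypothesis $0 < x < C(k-r)$ ensures $0 < \lambda < C/\sigma^2 = C'$, placing us in the valid range of the cumulant estimate above, and the exponent becomes $-x^2/(2(k-r)\sigma^2) + O(1)$. This yields the corollary with $c = e^{O(1)}$ depending on $C$. There is no significant obstacle here; the only bit of bookkeeping is absorbing the small-$l$ contribution not covered by Proposition \ref{prop: one point cgf} into an additive constant, which is immediate from the almost-sure boundedness of each $Y_l$. Everything else is the standard Gaussian-type Chernoff optimization.
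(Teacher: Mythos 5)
Your proof is correct and follows essentially the same route as the paper: exponential Chebyshev, independence of the dyadic increments $Y_l$, Proposition~\ref{prop: one point cgf} for large $l$ together with $\sigma_l^2 = \sigma^2 + O(e^{-c\sqrt{2^l}})$, an $O(1)$ absorption of the finitely many small-$l$ cumulants, and the Gaussian optimization $\lambda = x/((k-r)\sigma^2)$, which the hypothesis $x < C(k-r)$ keeps in the admissible range. The only cosmetic difference is that the paper invokes Proposition~\ref{prop: one point cgf} with $10C$ as a slack constant, whereas you use $C/\sigma^2$ directly; both are fine.
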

\begin{proof}
Using the exponential Chebyshev's inequality, the probability in (\ref{eq: one point ld})
is bounded above by $\exp\left(\sum_{l=r+1}^{k}\psi_{l}^{\left(1\right)}\left(\lambda\right)-\lambda x\right)$, for all $\lambda>0$.
By Proposition \ref{prop: one point cgf} (with, say, $10C$ in place of $C$), we get that if $\lambda \le 10C$, 
$$
\begin{aligned}
\mathbb{P}\left[X_{r,k}\left(h\right)> x\right]
& \le \exp\left( c + \frac{\lambda^{2}}{2}\sum_{l=r+1}^{k}\sigma_{l}^{2}-\lambda x +O(e^{-c2^r})\right)\\
 & \le c\exp\left(\frac{\lambda^{2}}{2}\sum_{l=r+1}^{k}\sigma_{l}^{2}-\lambda x \right)
\overset{\eqref{eqn: sigmak estimate}}{\le} c\exp\left(\frac{\lambda^{2}}{2}\left(k-r\right)\sigma^2-\lambda x \right),
\end{aligned}
$$
where for $l$ too small for \eqref{eqn: one point cgf} to be applied, we simply use that $\psi_{l}(\lambda)$ is bounded.
Setting $\lambda=x\big((k-r)\sigma^2\big)^{-1} \le 10C$ gives the result.
\end{proof}

We now prove the bounds in the multivariate case.
\begin{prop}
\label{prop: two point ld}
Let $C>0$. For all $\vec{\lambda}=\left(\lambda,\lambda^{'}\right)$, where $0<\lambda,\lambda'<C$,
and $k$ large enough (depending on $C$), the cumulant generating function $\psi_{k}^{\left(2\right)}\left(\vec{\lambda}\right)$ satisfies
\begin{equation}
\label{eq: two point cgf}
\psi_{k}^{(2)}\big(\vec{\lambda}\big)=\frac{1}{2}\vec{\lambda}\cdot\vec{\Sigma}_{k}\vec{\lambda}+O\big(e^{-2^{k-1}}\big).
\end{equation}
Moreover, for such $k$, the measure $\mathbb{Q}_{\vec{\lambda}}$ in \eqref{eqn: RN1 and RN2} satisfies
\begin{equation}
\label{eq: two point mean and variance}
\mathbb{Q}_{\vec{\lambda}}\left[\vec{Y}_{k}\right]=\vec{\Sigma}_{k}\vec{\lambda}+O\big(e^{-2^{k-1}}\big)\mbox{ and }
\text{\normalfont Cov}_{\mathbb{Q}_{\lambda}}\left[\vec{Y}{}_{k}\right]=\vec{\Sigma}_{k}+O\big(e^{-2^{k-1}}\big).
\end{equation}
\end{prop}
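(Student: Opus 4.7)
The plan is to prove \eqref{eq: two point cgf} by first factorizing over primes using independence of the $U_p$'s and then computing each factor explicitly from the uniform distribution on the circle.

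By independence, write
\begin{equation*}
\psi_{k}^{(2)}(\vec\lambda)=\sum_{2^{k-1}<\log p\le 2^k}\log\E\!\left[\exp\bigl(\lambda W_p(h)+\lambda' W_p(h')\bigr)\right].
\end{equation*}
For each prime, writing $U_p=e^{i\theta_p}$ with $\theta_p$ uniform on $[0,2\pi)$ and combining the two cosines, one gets
\begin{equation*}
Z_p:=\lambda W_p(h)+\lambda' W_p(h')=r_p\cos(\theta_p-\phi_p),\quad r_p^2=\frac{\lambda^2+\lambda'^2+2\lambda\lambda'\cos((h-h')\log p)}{p},
\end{equation*}
so in particular $r_p\le 2C/\sqrt{p}$. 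Then $\E[e^{Z_p}]=\frac{1}{2\pi}\int_0^{2\pi}e^{r_p\cos\theta}\,d\theta=I_0(r_p)$, the modified Bessel function, with Taylor series $I_0(r)=1+r^2/4+r^4/64+O(r^6)$. Taking logarithms gives $\log I_0(r_p)=r_p^2/4+O(r_p^4)$, provided $k$ is large enough (depending on $C$) so that $r_p\le 1/2$ for every $p$ with $\log p>2^{k-1}$.

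A direct check using \eqref{eq: variance and covariance of Wps} shows $r_p^2/4=\tfrac12\E[Z_p^2]$, so summing the leading term over primes yields exactly $\tfrac12\vec\lambda\cdot\vec\Sigma_k\vec\lambda$. For the error I would sum $\sum_p O(r_p^4)=O\!\bigl(\sum_{\log p>2^{k-1}}p^{-2}\bigr)$ and estimate this using \eqref{eq: PNT} and integration by parts: substituting $v=\log u$,
\begin{equation*}
\int_{e^{2^{k-1}}}^{\infty}\frac{du}{u^2\log u}=\int_{2^{k-1}}^{\infty}\frac{e^{-v}}{v}\,dv \le \frac{e^{-2^{k-1}}}{2^{k-1}},
\end{equation*}
which gives the claimed $O(e^{-2^{k-1}})$ bound. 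For \eqref{eq: two point mean and variance}, I would not differentiate the error asymptotically; instead I would repeat the above expansion, noting that the remainder $\log I_0(r_p)-r_p^2/4$ is a smooth function of $\vec\lambda$ whose gradient and Hessian are bounded by $O(r_p^2)$ times derivatives of $r_p^2$ in $\vec\lambda$, which are $O(1/p)$. This keeps the per-prime error at $O(p^{-2})$, and the same PNT estimate closes the argument.

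The main subtlety is the sharpness of the error exponent. A naive Taylor expansion $\log\E[e^{Z_p}]=\tfrac12\E[Z_p^2]+O(\E[|Z_p|^3])$ would give only $O(p^{-3/2})$ per prime, whose sum over $\log p\in(2^{k-1},2^k]$ is of order $e^{2^{k-2}}/2^k$, far from $O(e^{-2^{k-1}})$. The crucial saving is that the third cumulant vanishes, $\E[Z_p^3]=r_p^3\,\E[\cos^3(\theta_p-\phi_p)]=0$, by the rotation invariance of $\theta_p$; this is what makes the leading error $O(r_p^4)=O(p^{-2})$ rather than $O(p^{-3/2})$, and this is the step I would be most careful to highlight.
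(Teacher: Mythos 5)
Your proposal is correct and follows essentially the same route as the paper: factor over primes by independence, express each single-prime factor as a modified Bessel function $I_0$, expand $\log I_0$ to second order, and control the per-prime remainder at $O(p^{-2})$ so the sum over $\log p > 2^{k-1}$ gives $O(e^{-2^{k-1}})$; the gradient/Hessian argument for \eqref{eq: two point mean and variance} is likewise the same. Your remark that the rotation invariance of $\theta_p$ kills the third cumulant — which is what turns a naive $O(p^{-3/2})$ into the needed $O(p^{-2})$ — is a nice explicit articulation of what the $I_0$ expansion encodes implicitly.
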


\begin{proof}
We first compute
\begin{equation}
\begin{array}{rcl}\label{eq: psiW}
  \psi_p^{W}( \vec{\lambda} ) & =& \log \mathbb{E}[ \exp( \lambda W_p(0) + \lambda' W_p( |h - h'| ) ]\\
                              & = & \log \frac{1}{2\pi}\int_0^{2\pi} \exp\left( \frac{\lambda}{p^{1/2}} \cos( \theta ) + \frac{\lambda'}{p^{1/2}} \cos( \theta + |h-h'|\log p)\right) d\theta.
\end{array}
\end{equation}
Recall that for any $a,b \in \mathbb{R}$,
\begin{equation}\label{eq: Bessel Identity}
 \frac{1}{2\pi}\int_0^{2\pi} \exp( a\cos(\theta) + b\sin(\theta) ) d\theta = I_0( \sqrt{a^2 + b^2} ),
\end{equation}
where $I_n$ denotes the $n$-th modified Bessel function of the first kind. The identity $\cos( \theta + \eta) = \cos(\theta)\cos(\eta) - \sin(\theta) \sin( \eta )$ can be used
together with \eqref{eq: Bessel Identity} to write the integral in the bottom line of \eqref{eq: psiW} as
\begin{equation}
\begin{array}{l}\label{eq: Using Bessel Identity}
I_0\left( \sqrt{ \frac{1}{p} \left(\lambda+\cos(|h-h'|\log p)\lambda'\right)^2 + \frac{1}{p} \left(\sin(|h-h'|\log p) \lambda'\right)^2 } \right)\\
= I_0\left( \sqrt{ \frac{1}{p} \left( \lambda^2 + 2\lambda\lambda'\cos(|h-h'|\log p) +\lambda'^2 \right) }\right) = I_0\left( \sqrt{ 2\vec{\lambda}\cdot \vec{M}_p \vec{\lambda} } \right),
\end{array}
\end{equation}
where
$$
\vec{M}_p=
\frac{1}{2p}\left(\begin{array}{cc} 1& \cos(|h-h'|\log p)\\ \cos(|h-h'|\log p) & 1 \end{array}\right)\ ,
$$
is the covariance matrix of $( W_p(h),W_p(h'))$, see \eqref{eq: variance and covariance of Wps}. Thus writing
\begin{equation}\label{eq: f def}
  f(x) = \log I_0(\sqrt{2x}),
\end{equation}
we have $\psi_p^{W}( \vec{\lambda} ) = f\left( \vec{\lambda}\cdot \vec{M}_p \vec{\lambda} \right)$.
 Recall
that $I_0(x)$ has Taylor expansion $I_0(x) = 1 + \frac{x^2}{4} + \frac{x^4}{64} + O(x^6)$
(which can be verified by expanding in \eqref{eq: Bessel Identity}), so that $f$ has Taylor expansion
\begin{equation}\label{eq: f taylor exp}
f(x) = \frac{x}{2} - \frac{x^2}{16} + O(x^3).
\end{equation}

Now since the random variables $U_p$ are independent,
$$
\psi^{(2)}_k\big(\vec{\lambda}\big)
= \sum_{2^{k-1}<\log p \leq 2^k} \psi_p^{W}( \vec{\lambda} )
= \sum_{2^{k-1}<\log p \leq 2^k} f\left( \vec{\lambda}\cdot \vec{M}_p \vec{\lambda} \right).
$$
The bound \eqref{eq: f taylor exp} implies that for $k$ large enough (depending on $C$),
$$
\psi^{(2)}_k\big(\vec{\lambda}\big)
= \sum_{2^{k-1}<\log p \leq 2^k} \left( \frac{1}{2}\vec{\lambda}\cdot \vec{M}_p \vec{\lambda} + O\left(p^{-2}\right) \right)
= \frac{1}{2}\vec{\lambda}\cdot \vec{\Sigma}_k \vec{\lambda} + O\left(e^{-2^{k-1}}\right).
$$
This proves \eqref{eq: two point cgf}.

The first claim of \eqref{eq: two point mean and variance} follows similarly after noting that the gradient
of the map $\vec{\lambda} \to f( \vec{\lambda}\cdot \vec{M}_p \vec{\lambda} )$ is 
$\vec{M}_p \vec{\lambda} f'\left( \vec{\lambda}\cdot \vec{M}_p \vec{\lambda} \right)$, and using the bound
$f'(x) = \frac{1}{2} + O(x)$, valid for $x\in[0,1]$. Finally the second claim of \eqref{eq: two point mean and variance}
follows by noting that the Hessian of the aforementioned map is 
$$
  \vec{M}_p f'(\vec{\lambda}\cdot \vec{M}_p)
  + (\vec{M}_p \vec{\lambda})(\vec{M}_p \vec{\lambda})^T f''\left( \vec{\lambda}\cdot \vec{M}_p \vec{\lambda} \right),
$$
and using the previous bound for $f'(x)$, and that $f''(x)$ is bounded in $[0,1]$.
\end{proof}

\subsection{Continuity estimates}
\label{sect: continuity}
The main result of this section is a maximal inequality which shows that the maximum
over an interval of length $2^{-k}$ of the field $X_{r,k}(h)$ is close to the value of
the field at the mid-point of the interval, where $X_{r,k}(h)$ is defined in \eqref{eqn: process cutoff}.
One of the upshots is to reduce the proof of
the upper bound of the maximum of the process on $[0,1]$ to an upper bound on the maximum over a discrete set of points in Section \ref{sect: upper}.

\begin{prop}
\label{prop: max inequality}
Let $C>0$. For any $0 \le r \leq k-1$, $0\le x\le C \left(k-r\right)$, $2 \le a \le 2^{2k}-x$ and $h \in \mathbb{R}$, 
\begin{equation}
\label{eqn: chaining}
\mathbb{P}
\left[\max_{h':\left|h'-h\right|\le2^{-k-1}}X_{r,k}(h')> x+a,X_{r,k}\left(h\right)\le x\right]\le c\exp\left(-\frac{x^{2}}{2\left(k-r\right)\sigma^2}-ca^{3/2}\right),
\end{equation}
where the constants $c$ depend on $C$.
\end{prop}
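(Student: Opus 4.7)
The natural strategy is a dyadic chaining argument combined with the bivariate cumulant estimate of Proposition \ref{prop: two point ld}. Since $X_{r,k}$ is a finite sum of sinusoids with frequencies at most $2^{k}$, it is analytic, so the supremum over $[h-2^{-k-1},h+2^{-k-1}]$ is attained and equals the increasing limit of the suprema over a sequence of dyadic grids $D_{j}\subset[h-2^{-k-1},h+2^{-k-1}]$ of spacing $2^{-k-1-j}$; let $\pi_{j}(h')$ denote the closest point in $D_{j}$ to a point $h'$. It thus suffices to estimate the probability with $\max$ taken over $D_{J}$ for $J$ finite but large, the continuity correction being handled by a Bernstein-type bound on $\|X'_{r,k}\|$ applied to the band-limited trigonometric polynomial $X_{r,k}$.

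For any $h'$, I would use the telescoping expansion $X_{r,k}(h')-X_{r,k}(h)=\sum_{j\ge0}\Delta_{j}(h')$ where $\Delta_{j}(h')=X_{r,k}(\pi_{j+1}(h'))-X_{r,k}(\pi_{j}(h'))$. By Lemma \ref{lem: primes}, the variance of each chaining increment is $O(2^{-2j})$: since only scales $\ell\le k$ contribute to $X_{r,k}$ while adjacent grid points at level $j+1$ have branching point at least $k+j$, the covariance falls in the regime $\ell\le h\wedge h'$ of \eqref{eqn: correlation estimates}, giving a difference of $O(2^{-2(k+j-\ell)})$ that sums geometrically. Choosing a sequence $\{a_{j}\}_{j\ge0}$ with $\sum_{j}a_{j}=a$, if the event in \eqref{eqn: chaining} occurs at some maximizer then some adjacent pair $(u,v)$ in some $D_{j+1}$ must satisfy $X_{r,k}(v)-X_{r,k}(u)>a_{j}$, whence a union bound yields
\[
\mathbb{P}[\text{event}]\le\sum_{j\ge0}\sum_{(u,v)\text{ adj. in }D_{j+1}}\mathbb{P}\big[X_{r,k}(v)-X_{r,k}(u)>a_{j},\ X_{r,k}(h)\le x\big].
\]

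Each summand would be controlled via the bivariate exponential Chebyshev inequality applied to the joint MGF of $(X_{r,k}(h),\ X_{r,k}(v)-X_{r,k}(u))$, which is estimated through the three-point analogue of Proposition \ref{prop: two point ld} (proved by the same Bessel-function argument as in Section \ref{sect: ld}). Selecting the Chebyshev parameter conjugate to $X_{r,k}(h)$ to be $\mu^{*}=x/((k-r)\sigma^{2})$, which is admissible because $x\le C(k-r)$, extracts the factor $\exp(-x^{2}/(2(k-r)\sigma^{2}))$, while the parameter conjugate to the increment is tuned within its admissible range $[0,C]$ to produce a tail factor in $a_{j}$. Balancing the entropy $O(2^{j})$ at each scale against these tails, with a carefully chosen $\{a_{j}\}$, produces the claimed $\exp(-ca^{3/2})$ factor. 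The main technical obstacle is exactly this optimization: the validity constraint $\lambda\le C$ in Proposition \ref{prop: two point ld} forces the per-pair tail to transition from a Gaussian $\exp(-ca_{j}^{2}2^{2j})$ (available only when the optimal Chebyshev parameter $a_{j}/v_{j}$ lies below $C$) to a linear-exponential $\exp(-ca_{j})$ otherwise, and the exponent $3/2$ in $a$ is precisely what emerges from balancing these two regimes against the scale-dependent entropy $2^{j}$.
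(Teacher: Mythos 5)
Your overall architecture---a dyadic chaining decomposition feeding into a bivariate exponential Chebyshev bound derived from the Bessel-function computations of Section \ref{sect: ld}---is indeed the architecture of the paper's proof, which combines exactly such a chaining with the joint tail estimate of Lemma \ref{lem: chaining LD}. But there are two genuine gaps.

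\textbf{The $q$-decomposition of $X_{r,k}(h)$ is missing.} Your union bound reduces the problem to estimating $\mathbb{P}\bigl[X_{r,k}(v)-X_{r,k}(u)>a_{j},\ X_{r,k}(h)\le x\bigr]$, and you then propose to ``select the Chebyshev parameter conjugate to $X_{r,k}(h)$ to be $\mu^{*}=x/((k-r)\sigma^{2})$'' to extract the factor $\exp(-x^{2}/(2(k-r)\sigma^{2}))$. But exponential Chebyshev with a positive parameter bounds the probability that a variable is \emph{at least} some value; the event supplies only the upper bound $X_{r,k}(h)\le x$, which is not rare and yields no such factor (indeed for $x$ comparable to $k-r$ that event has probability close to $1$). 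The paper resolves this by first slicing the event $\{X_{r,k}(h)\le x\}$ into the bands $B_{q}=\{X_{r,k}(h)\in[x-q-1,x-q]\}$, $q=0,\dots,x$; on $B_{q}$ one has the \emph{lower} bound $X_{r,k}(h)\ge x-q-1$ (which does admit the tilting $\mu^{*}$, yielding $\exp(-(x-q-1)^2/(2(k-r)\sigma^2))$), and the price is that the chaining increment must now exceed $a+q$. Summing over $q$ recovers the product form of the bound. Without some equivalent localization of $X_{r,k}(h)$ there is no way to extract the $x$-dependent Gaussian factor.

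\textbf{The mechanism you propose for the $a^{3/2}$ exponent does not close.} You suggest the per-pair tail transitions from Gaussian $\exp(-ca_{j}^{2}2^{2j})$ to linear-exponential $\exp(-ca_{j})$ once the optimal tilting parameter exceeds the cap $\lambda\le C$ of Proposition \ref{prop: two point ld}. But with $\sum_{j}a_{j}=a$ one necessarily has $a_{j}\to0$, so $\sum_{j}2^{j}e^{-ca_{j}}$ diverges: the linear-exponential regime cannot beat the entropy $2^{j}$. The paper's Lemma \ref{lem: chaining LD} circumvents this by exploiting that the summands of the increment, $W_{p}(h_{2})-W_{p}(h_{1})$, have sup norm $O\bigl(p^{-1/2}|h_{2}-h_{1}|\log p\bigr)$ rather than $O(p^{-1/2})$. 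This yields a cumulant bound of the form $c\lambda_{2}D+c(\lambda_{2}D)^{2}$ with $D=2^{k}|h_{2}-h_{1}|$, which is valid for $\lambda_{2}$ all the way up to $|h_{2}-h_{1}|^{-1}$---far beyond $C$. Choosing $\lambda_{2}\asymp\sqrt{y}/D$ then gives the per-pair tail $\exp\bigl(-cy^{3/2}/D\bigr)$, i.e.\ $\exp(-ca_{j}^{3/2}2^{j})$ at scale $j$ with $a_{j}=(a+q)/(2(j+1)^{2})$, and the factor $2^{j}$ inside the exponent \emph{does} defeat the entropy and produces the claimed $\exp(-ca^{3/2})$. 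So the $3/2$ does not come from balancing a $\lambda\le C$ cap; it comes from the linear-plus-quadratic structure of this refined cumulant estimate, which is stronger than the ``three-point analogue of Proposition \ref{prop: two point ld}'' your proposal invokes.

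Finally, a minor point: you invoke a Bernstein-type derivative bound to pass from the continuum supremum to a discrete one; the paper instead just uses almost-sure continuity of $h\mapsto X_{r,k}(h)$ and the absolutely convergent telescoping sum, avoiding any further quantitative input.
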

The proof of the proposition is postponed until the end of the section.
It is based on a {\em chaining argument} and an estimate on joint large deviations of $X_{r,k}(h)$ and of the difference $X_{r,k}(h')-X_{r,k}(h)$ for $|h'-h|\leq 2^{-k-1}$, see Lemma \ref{lem: chaining LD} below. 
The exponent of the $a$ term is probably not optimal.
A direct consequence of the proposition is the following large deviation bound of the maximum of $X_{k}\left(h\right)$ over an interval of length $2^{-k}$.
\begin{cor}
\label{cor: tail of sup over interval}
Let $C>0$. For any $0 \le r \leq k-1$, $h\in \mathbb{R}$ and $0\le x\le C\left(k-r\right)$,
\begin{equation}
\mathbb{P}\left[\max_{h^{'}:\left|h^{'}-h\right|\le2^{-k-1}}X_{r,k}(h')> x \right]\le c\exp\left(-\frac{x^{2}}{2\left(k-r\right)\sigma^2}\right),\label{eq: tail of sup over interval}
\end{equation}
where the constant $c$ depends on $C$.
\end{cor}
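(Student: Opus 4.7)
The corollary is a short deduction from Proposition \ref{prop: max inequality} and Corollary \ref{cor: ld one point}, obtained by peeling the event according to the value of $X_{r,k}(h)$ at the centre of the interval. The plan is as follows. If $x\le 2$ or if $k-r$ is bounded in terms of $C$, then $x$ is bounded and the right-hand side of \eqref{eq: tail of sup over interval} is bounded below by an absolute constant, so the inequality holds trivially by enlarging $c$. I may therefore assume $x>2$ and $k-r$ large enough that $x+2\le 2^{2k}$ (using $x\le C(k-r)\le Ck$ and that $2^{2k}$ grows exponentially).

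Under these assumptions, I decompose
\[
\Big\{\max_{|h'-h|\le 2^{-k-1}} X_{r,k}(h')>x\Big\}
\subseteq
\big\{X_{r,k}(h)>x-2\big\}
\,\cup\,
\Big\{\max_{|h'-h|\le 2^{-k-1}} X_{r,k}(h')>x,\ X_{r,k}(h)\le x-2\Big\}.
\]
On the first event, Corollary \ref{cor: ld one point} (applied at the level $x-2$, which lies in $[0,C(k-r)]$) gives the bound $c\exp\big(-(x-2)^2/(2(k-r)\sigma^2)\big)$. On the second event, Proposition \ref{prop: max inequality} applies with the ``$x$'' there taken to be $x-2$ and with $a=2$ (valid since $2\le 2^{2k}-(x-2)$), yielding
\[
c\exp\!\Big(-\tfrac{(x-2)^{2}}{2(k-r)\sigma^{2}}-c\cdot 2^{3/2}\Big),
\]
which is also at most $c\exp\big(-(x-2)^2/(2(k-r)\sigma^2)\big)$.

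Finally I translate the exponent back to $x^2$: expanding $(x-2)^2 \ge x^2-4x$ and using $x\le C(k-r)$,
\[
\frac{(x-2)^{2}}{2(k-r)\sigma^{2}}
\ge \frac{x^{2}}{2(k-r)\sigma^{2}}-\frac{2x}{(k-r)\sigma^{2}}
\ge \frac{x^{2}}{2(k-r)\sigma^{2}}-\frac{2C}{\sigma^{2}},
\]
so the multiplicative error $e^{2C/\sigma^{2}}$ is absorbed into $c$, giving \eqref{eq: tail of sup over interval}. There is no real obstacle here beyond invoking the two previous results; the only mild point is ensuring that the gap ``$2$'' between the central value and the level of the maximum is compatible with the constraint $a\le 2^{2k}-x$ in Proposition \ref{prop: max inequality}, which is why small values of $k-r$ are absorbed into the constant at the outset.
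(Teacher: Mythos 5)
Your proof is correct and follows essentially the same route as the paper: decompose the event according to whether $X_{r,k}(h)\le x-2$, apply Proposition \ref{prop: max inequality} with $x-2$ in place of $x$ and $a=2$ to the first piece, and Corollary \ref{cor: ld one point} to the other. Your extra care about the boundary cases $x\le 2$ and small $k-r$, and the explicit check that $(x-2)^2$ vs.\ $x^2$ changes only the constant, makes explicit what the paper leaves implicit but does not change the argument.
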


\begin{proof}
The left-hand side of \eqref{eq: tail of sup over interval} is at most
\[
\mathbb{P}\left[\max_{h':\left|h^{'}-h\right|\le2^{-k-1}}X_{r,k}\left(h'\right)>(x-2)+2,X_{r,k}\left(h\right)\le x-2\right]+\mathbb{P}\left[X_{r,k}\left(h\right)> x-2\right]\ .
\]
The bound follows by \eqref{eqn: chaining} with $x-2$ in place of $x$ and $a=2$, and the bound \eqref{eq: one point ld}.
\end{proof}
\begin{rem}
\label{rem: leading order upper bound}
A union bound over $2^{n}$ intervals of length $2^{-n}$ yields
\begin{equation}
\label{eqn: max int}
\mathbb{P}\left[\max_{h\in[0,1]}X_{n}\left(h\right)\ge (1+\delta)n\log 2\right]\le c2^{-n\delta}\ \mbox{ for all }\delta>0,
\end{equation}
where \eqref{eq: tail of sup over interval} is used with $r=0$ and $k=n$ (note that $X_n(h) = Y_0(h) + X_{0,n}(h)$ and $Y_0$ is bounded).
This proves that $\max_{h\in[0,1]}X_{n}\left(h\right)$ is at most $(1+o(1))n\log 2$, which is tight to leading order,
but does not include the subleading correction present in \eqref{eq: main in terms of X} and \eqref{eqn: m_n definition}.
\end{rem}

To prove Proposition \ref{prop: max inequality} we will use the following 
large deviation estimate for $X_{r,k}\left(0\right)$ and the difference
$X_{r,k}\left(h_{2}\right)-X_{r,k}\left(h_{1}\right)$ (jointly), where $|h_2-h_1| \le 2^{-k}$.
It shows that on a large deviation scale the two quantities are
essentially independent, and that the difference decays rapidly with $|h_2-h_1|$. The latter is
a consequence of the covariance of the field $X_{r,k}\left(h\right)$
losing its log-correlation structure below scale $2^{-k}$, and instead decaying linearly with distance. 
\begin{lem}
\label{lem: chaining LD}
Let $C>0$. For any $0 \leq r \leq k-1$, $0\leq x\leq C\left(k-r\right)$, $ 0 \le y \le2^{2k}$
and any distinct $-2^{-k-1} \leq h_1, h_2 \leq 2^{-k-1}$,
\begin{equation}
\label{eq: three point ld}
\mathbb{P}\left[X_{r,k}\left(0\right)\ge x,X_{r,k}\left(h_{2}\right)-X_{r,k}\left(h_{1}\right)\ge y\right]\le c\exp\left(-\frac{x^{2}}{2\left(k-r\right)\sigma^2}-\frac{ c y^{3/2}}{2^{k}|h_{2}-h_{1}|}\right),
\end{equation}
where the constants $c$ depend on $C$. 
\end{lem}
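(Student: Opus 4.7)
My plan is exponential Chebyshev with a two-parameter tilt, supplemented by a clean Bessel MGF bound in the spirit of Proposition \ref{prop: two point ld}. Write $U = X_{r,k}(0)$, $V = X_{r,k}(h_2) - X_{r,k}(h_1)$, and set $A = \text{Var}(U)$, $B^2 = \text{Var}(V)$, $C = \text{Cov}(U,V)$, $B_0 = 2^k|h_2-h_1|$; the hypothesis on $h_1, h_2$ gives $B_0 \le 1$. For any $\lambda, \mu \ge 0$,
\[
\prob[U \ge x,\, V \ge y] \le e^{-\lambda x - \mu y}\,\E\!\left[e^{\lambda U + \mu V}\right].
\]

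The first step bounds the joint MGF. Writing $U_p = e^{i\Theta_p}$ and using the identity $\cos\alpha - \cos\beta = -2\sin(\tfrac{\alpha+\beta}{2})\sin(\tfrac{\alpha-\beta}{2})$, the contribution of prime $p$ to $\lambda U + \mu V$ takes the form $A_p\cos\Theta_p + B_p\sin\Theta_p$, with $A_p^2 + B_p^2 = 2\,\text{Var}(\lambda W_p(0)+\mu(W_p(h_2)-W_p(h_1)))$ (since $\text{Var}(a\cos\Theta + b\sin\Theta) = (a^2+b^2)/2$ for uniform $\Theta$). By \eqref{eq: Bessel Identity}, the per-prime MGF is $I_0(\sqrt{A_p^2+B_p^2})$, and the Taylor series comparison
\[
I_0(z) = \sum_{k\ge 0}\frac{(z/2)^{2k}}{(k!)^2} \le \sum_{k\ge 0}\frac{(z^2/4)^k}{k!} = e^{z^2/4}
\]
then yields, after multiplying over primes with $2^r < \log p \le 2^k$, the clean bound
\[
\E\!\left[e^{\lambda U + \mu V}\right] \le \exp\!\Big(\tfrac{1}{2}\big(\lambda^2 A + 2\lambda\mu C + \mu^2 B^2\big)\Big), \qquad \lambda, \mu \in \R.
\]
Importantly this holds without any restriction on $|\lambda|,|\mu|$, so we have room to tilt aggressively in the $\mu$-direction.

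Next I estimate $A$, $B$, $C$. Lemma \ref{lem: primes} gives $A = (k-r)\sigma^2 + O(1)$. For the other two, I expand $\rho_l(0, h_i) = \sum_p (2p)^{-1}\cos(h_i\log p)$; using $|\cos a - \cos b| \le |a-b|$ together with \eqref{eq: logp over p sum} yields $|\rho_l(0,h_2)-\rho_l(0,h_1)| = O(|h_2-h_1|2^l)$, so $|C| \le \sum_{l \le k} O(|h_2-h_1|2^l) = O(B_0)$. Similarly $1-\cos z \le z^2/2$ gives $B^2 \le \sum_{l \le k} O(|h_2-h_1|^2 4^l) = O(B_0^2)$.

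Finally, take $\lambda = x/A$ and split in $y$. For $y \ge K B_0$ with $K$ a large absolute constant, the cross term satisfies $|xC/A| = O(B_0) \le y/2$ (using $x \le C(k-r) \lesssim A$), so choosing $\mu = (y - xC/A)/B^2 \ge 0$ yields
\[
\prob[U \ge x,\, V \ge y] \le \exp\!\Big(-\frac{x^2}{2A} - \frac{(y-xC/A)^2}{2B^2}\Big) \le \exp\!\Big(-\frac{x^2}{2A} - \frac{y^2}{cB_0^2}\Big);
\]
since $y \ge KB_0 \ge B_0^2$ (as $B_0 \le 1$) one has $y^2/B_0^2 \ge c'y^{3/2}/B_0$, and $A$ is replaced by $(k-r)\sigma^2$ at the cost of absorbing an $O(1)$ into the prefactor. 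For $y < KB_0$, instead $y^{3/2}/B_0 \le K^{3/2}B_0^{1/2} = O(1)$, so $e^{-cy^{3/2}/B_0}$ is bounded below by a constant and the claim reduces to the one-point estimate Corollary \ref{cor: ld one point} applied to $U$. The main obstacle is purely organizational: the $y^{3/2}$ exponent (not sharp, as the authors note after Proposition \ref{prop: max inequality}) is the shape that drops out most naturally from this case split, and is sufficient for the chaining argument.
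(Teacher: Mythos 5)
Your proof is correct, and it follows the same skeleton as the paper's (two-parameter exponential Chebyshev on $(U,V)$, the Bessel identity \eqref{eq: Bessel Identity} per prime, and a case split on $y$ versus $B_0 = 2^k|h_2-h_1|$). The one genuinely different ingredient is your use of the global termwise inequality $I_0(z) \le e^{z^2/4}$ in place of the paper's local Taylor expansion $\log I_0(\sqrt{x}) = x/4 + O(x^2)$. The paper's expansion only controls the cumulant per prime for small argument, which forces the restrictions $\lambda_1 \le 10C$ and $1 \le \lambda_2 \le |h_2-h_1|^{-1}$ on the tilt; in turn this makes the optimal $\lambda_2 \approx y/B_0^2$ unavailable for large $y$ and leads to the suboptimal (but sufficient) choice $\lambda_2 = cy^{1/2}/B_0$, directly producing the $y^{3/2}$ exponent. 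Your bound is unconditional, so the MGF is exactly sub-Gaussian with the true covariance matrix of $(U,V)$, the optimization is a clean complete-the-square, and you actually obtain the stronger exponent $e^{-cy^2/B_0^2}$ before downgrading to $e^{-cy^{3/2}/B_0}$ to match the stated form. The remaining bookkeeping — $A = (k-r)\sigma^2 + O(1)$ via \eqref{eqn: sigmak estimate}, $|C| = O(B_0)$ and $B^2 = O(B_0^2)$ via \eqref{eq: logp over p sum}, the cross-term $|xC/A| \le y/2$ for $y \ge K B_0$, and the reduction of the $y < KB_0$ case to Corollary \ref{cor: ld one point} — all checks out. What your simplification buys is cleaner optimization and independence from the hypothesis $y \le 2^{2k}$; what the paper's expansion buys is reuse in Propositions \ref{prop: one point cgf} and \ref{prop: two point ld}, where the $O(e^{-2^{k-1}})$ precision on the cumulant is actually needed and a pure upper bound on the MGF would not suffice.
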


\begin{proof}
Observe first that we may assume $y$ is bigger than a large constant depending on $C$ times $2^{k}|h_2 - h_1|$, (and therefore also bigger than a large constant times $2^{2k}|h_2-h_1|^{2}$), because otherwise the required bound follows from \eqref{eq: one point ld}.

For any $\lambda_{1},\lambda_{2} > 0$, the left-hand side of \eqref{eq: three point ld}
is bounded above by
\begin{equation}
\label{eqn: 3-point markov}
\mathbb{E}\left[\exp\left(\lambda_{1}X_{r,k}\left(0\right)+\lambda_{2}\left(X_{r,k}\left(h_{2}\right)-X_{r,k}\left(h_{1}\right)\right)\right)\right]\exp\left(-\lambda_{1}x-\lambda_{2}y\right).
\end{equation}
We will show that if $\lambda_{1} \le 10C$ and $1 \le \lambda_{2} \le |h_2-h_1|^{-1}$,
\begin{equation}
\label{eqn: 3-point exp}
\begin{aligned}
&\mathbb{E}\left[\exp\left(\lambda_{1}X_{r,k}\left(0\right)+\lambda_{2}\left(X_{r,k}\left(h_{2}\right)-X_{r,k}\left(h_{1}\right)\right)\right)\right]\\
&\hspace{2cm}
\le c \exp\left(
\frac{\lambda_1^2\sigma^2}{2}(k-r) + c\lambda_2 2^{k}|h_2-h_1| + c( \lambda_2 2^{k}|h_2-h_1|)^2 
\right)\ .
\end{aligned}
\end{equation}
The result then follows by choosing $\lambda_1=x\ ((k-r)\sigma^2)^{-1}$ and $\lambda_2= c y^{1/2} \ 2^{-k}|h_2-h_1|^{-1}$
in \eqref{eqn: 3-point markov} and \eqref{eqn: 3-point exp}, for a suitable small $c$, and using our assumption that $y$ is bigger than a large constant times $2^{k}|h_2 - h_1|$. Note that the assumptions on $x,y, h_1$ and $h_2$ ensure that
$\lambda_1\le10C$ and $1 \le \lambda_2 \le |h_2-h_1|^{-1}$.

We now prove \eqref{eqn: 3-point exp}. First we note that similarly to the argument from \eqref{eq: psiW} to \eqref{eq: Using Bessel Identity}, 
\begin{equation}\label{eq: three point cgf}
\mathbb{E}\left[\exp\left(\lambda_{1}W_{p}\left(0\right)+\lambda_{2}\left(W_{p}\left(h_{2}\right)-W_{p}\left(h_{1}\right)\right)\right)\right],
\end{equation}
can be written explicitly as
\begin{equation}
I_0\left(  \sqrt{ \frac{1}{p} \Big(\lambda_1+(\cos(h_2\log p)-\cos(h_1\log p))\lambda_2\Big)^2 + \frac{1}{p} \Big((\sin(h_2\log p)-\sin(h_1\log p)) \lambda_2\Big)^2 } \right).
\end{equation}
 Recall from \eqref{eq: f taylor exp} that $\log I_0 (\sqrt{x}) = \frac{1}{4}x + O(x^2)$,
and that $\cos(h_2\log p)-\cos(h_1\log p)=O(|h_2-h_1|\log p)$ and $\sin(h_2\log p)-\sin(h_1\log p)=O(|h_2-h_1|\log p)$.
Thus provided $\lambda_1 \le 10C$, $1 \le \lambda_2 \le |h_2-h_1|^{-1}$ and $p$ is large enough, the logarithm of the quantity
in \eqref{eq: three point cgf} is at most
\begin{equation}\label{eq: bound on I0} 
\begin{aligned}
&\frac{1}{4p} ( \lambda_1 + c \lambda_2 |h_2-h_1| \log p)^2 + \frac{c}{p}(\lambda_2 |h_2-h_1| \log p)^2 + cp^{-2} \\
&\hspace{2cm}\le
\frac{\lambda_1^2}{4p} + \frac{c}{p} \lambda_2 |h_2-h_1|\log p
+ \frac{c}{p}\left( \lambda_2 |h_2-h_1|\log p\right)^2  + cp^{-2}.
\end{aligned}
\end{equation}
Here we used the fact that $\lambda_1 \leq 10C$. After summing over $2^{r} < \log p \le 2^k$ we get that
$$
\begin{aligned}
&\log \mathbb{E}\left[\exp\left(\lambda_{1}X_{r,k}\left(0\right)+\lambda_{2}\left(X_{r,k}\left(h_{2}\right)-X_{r,k}\left(h_{1}\right)\right)\right)\right]\\
&\hspace{2cm} \le c + \displaystyle{\sum_{2^{r} < \log p \le 2^k}} \frac{\lambda_1^2}{4p} + c\displaystyle{\sum_{2^{r} < \log p \le 2^k}}\frac{\log p}{p} \lambda_2|h_2-h_1| + c\displaystyle{\sum_{2^{r} < \log p \le 2^k}}\frac{(\log p)^2}{p}\left( \lambda_2|h_2-h_1|\right)^2 .
\end{aligned}
$$
In the above, if $p$ is too small for \eqref{eq: bound on I0} to be an upper bound, we simply use that \eqref{eq: three point cgf} is bounded. 
The claim \eqref{eqn: 3-point exp} now follows from the bounds \eqref{eqn: sigmak estimate} and \eqref{eq: logp over p sum}.
\end{proof}

We are now ready to prove Proposition \ref{prop: max inequality}.
We will use the following notation: for $k\in \N$, let
\begin{equation}
\label{eqn: H}
\begin{array}{c}
\mathcal{H}_k \mbox{ be the set } \frac{1}{2^k}\mathbb{Z} \mbox{ of dyadic rationals, so that }
\mathcal{H}_{0} \subset \mathcal{H}_{1}\subset\ldots\subset\mathcal{H}_{k}\subset\ldots\subset \mathbb{R} \\
\mbox{ is a nested sequence of sets of equally spaced points and } |\mathcal{H}_k \cap [0,1)| = 2^k.
\end{array}
\end{equation}

\begin{proof}[Proof of Proposition \ref{prop: max inequality}]
Without loss of generality, we may assume $h=0$. We can also round $x$ up and decrease $a$ so that
we may assume that $x$ is an integer and $a\ge1$. Define the events
\[
B_{q}=\left\{ X_{r,k}\left(0\right)\in\left[x-q-1,x-q\right]\right\} ,q=0,1,\ldots,x-1,\mbox{ and }B_{x}=\left\{ X_{r,k}\left(0\right)\le0\right\} .
\]
Note that the left-hand side of \eqref{eqn: chaining} is at most 
\begin{equation}\label{eq:sum over p}
\sum_{q=0}^{x}\mathbb{P}\left[B_{q}\cap\left\{ \max_{h^{'}\in A}\left\{X_{r,k}\left(h'\right)-X_{r,k}\left(0\right)\right\}\ge a+q\right\} \right],
\end{equation}
where $A=[-2^{-k-1},2^{-k-1}]$.
Let $(h_i, i\geq 0)$ be a dyadic sequence such that $h_0=0$, $h_i \in \mathcal H_{k+i} \cap A$ and $\lim_{i \to \infty} h_{i} = h'$,
 so that $|h_{i+1} - h_i| \in \{0, 2^{-k-i-1}\}$ for all $i$.
Because the map $h\mapsto X_{r,k}(h)$ is almost surely continuous,
$$
X_{r,k}(h')-X_{r,k}(0)=\sum_{i=0}^{\infty} \big(X_{r,k}(h_{i+1})- X_{r,k}(h_{i})\big)\ .
$$
The right-hand side converges almost surely, since
$\sum_{i=0}^{l} \big(X_{r,k}(h_{i+1})- X_{r,k}(h_{i})\big)\ = X_{r,k}(h_{l+1}) - X_{r,k}(0) \to X_{r,k}(h') - X_{r,k}(0)$,
because  $X_{r,k}(h)$ is continuous almost surely.
Since $\sum_{i=0}^\infty \frac{1}{2(i+1)^2}\leq 1$, we have the inclusion of events,
\[
\left\{ X_{r,k}\left(h'\right)-X_{r,k}\left(0\right)\ge a+q\right\} \subset\bigcup_{i=0}^{\infty}\left\{ X_{r,k}\left(h_{i+1}\right)-X_{r,k}\left(h_{i}\right)\ge\frac{a+q}{2\left(i+1\right)^{2}}\right\} .
\]
This implies that $\left\{ \max_{h^{'}\in A}\left(X_{r,k}\left(h^{'}\right)-X_{r,k}\left(0\right)\right)\ge a+q\right\} $ is included in 
\[
\bigcup_{i=0}^{\infty}\bigcup_{\begin{array}{c}
h_{1}\in\mathcal{H}_{k+i}\cap A ,\\
h_{2}=h_{1} \pm 2^{-k-i-1}
\end{array}}\left\{ X_{r,k}\left(h_{2}\right)-X_{r,k}\left(h_{1}\right)\ge\frac{a+q}{2\left(i+1\right)^{2}}\right\} ,
\]
where we have ignored the case $h_1=h_2$ since then event 
$\left\{ X_{r,k}\left(h_{2}\right)-X_{r,k}\left(h_{1}\right)\ge\frac{a+q}{2\left(i+1\right)^{2}}\right\}$
is the empty set.
Because $\left|\mathcal{H}_{k+i}\cap A\right|\le c2^{i}$, 
the $q$-th summand in \eqref{eq:sum over p} is at most,
$$
\sum_{i=0}^{\infty} c2^{i}\sup_{\begin{array}{c}
h_{1}\in\mathcal{H}_{k+i}\cap A ,\\
h_{2}=h_{1} \pm 2^{-k-i-1}
\end{array}}\mathbb{P}\left[B_{q}\cap\left\{ X_{r,k}\left(h_{2}\right)-X_{r,k}\left(h_{1}\right)\ge\frac{a+q}{2\left(i+1\right)^{2}}\right\} \right].\label{eq: done markov over dyadic level}
$$
Note that $a+q\leq a+x\leq 2^{2k}$ by assumption. The inequality \eqref{eq: three point ld} can thus be applied to get that \eqref{eq: done markov over dyadic level} is at most
$$
c\sum_{i=0}^{\infty}2^{i}\exp\left(-\frac{\left(x-q-1\right)^{2}}{2(k-r)\sigma^2}-c2^{i}\frac{\left(a+q\right)^{3/2}}{(i+1)^3}\right)
\le ce^{-\frac{\left(x-q-1\right)^{2}}{2(k-r)\sigma^2}-c(a+q)^{3/2}}.
$$
Since $e^{-c\left(a+q\right)^{3/2}}\le e^{-ca^{3/2}-cq^{3/2}}$, \eqref{eq:sum over p} is thus at most
$$
\begin{aligned}
ce^{-ca^{3/2}}\sum_{q=0}^{x}e^{-\left(x-q-1\right)^{2}/\left(2(k-r)\sigma^2\right)-cq^{3/2}}
&\le ce^{-x^{2}/\left(2(k-r)\sigma^2\right)-ca^{3/2}}\sum_{q=0}^{x}e^{c\left(q+1\right)-cq^{3/2}}\\
&\le ce^{-x^{2}/\left(2(k-r)\sigma^2\right)-ca^{3/2}},
\end{aligned}
$$
where we used the assumption $x\leq C(k-r)$. This proves \eqref{eqn: chaining}. 
\end{proof}

\subsection{Gaussian approximation}
\label{sect: gaussian}
The purpose of this section is to compare the increments $Y_{k}(h)$ to Gaussian random variables with
mean and variance independent of $k$, both for a single $h\in\mathbb{R}$ and for
vectors $(Y_{k}(h_1),Y_{k}(h_2))$ for $h_1 \ne h_2 \in \mathbb{R}$.
This will be used in the subsequent sections to apply the ballot theorem and derive bounds on the probability that $X_{r,k}(h_1)$ and $X_{r,k}(h_2)$ satisfy
a barrier condition.
One reason to pass to Gaussian random variables is that the standard ballot theorem provides such bounds for random walks with i.i.d.~increments.
It does not immediately apply to the process $k \mapsto X_{r,k}(h)$, whose increments $Y_k(h)$ have slightly different distributions for different $k$.
Moreover, we need to show that the increments $Y_{k}(h_1)$ and $Y_{k}(h_2)$ for two points $h_1\ne h_2$ become roughly independent
when $k$ is beyond the branching point $h_1\wedge h_2$, cf.~\eqref{eqn: branching}.
To quantify this, we introduce a parameter $\Delta$ and refer to the scale $h_1\wedge h_2+\Delta$ as the {\it decoupling point}.
Passing to Gaussian variables facilitates the proof of the decoupling, since in the Gaussian case we can investigate independence
solely by controlling the covariance and the mean.

Our main tool is the following multivariate Berry--Esseen approximation for independent random vectors.
For the remainder of the paper, $\eta_{\vec \mu, \Sigma}$ will denote the Gaussian measure with mean vector $\vec \mu$ and covariance matrix $\Sigma$.
\begin{lem}[Corollary 17.2 in \cite{bhattacharya-rao}, see also Theorem 1.3 in \cite{goetze}]
\label{lem: normal approx}
Let $(\vec W_j, j\geq1 )$ be a sequence of independent random vectors on $(\R^d,\mathcal B(\R^d),P)$
with mean $E\left[\vec W_j\right]$ and covariance matrix $\text{Cov}(\vec W_j)$. 
Define 
$$
\vec{\mu}_m = \sum_{j=1}^m E\left[\vec W_j\right]  \mbox{ and } \Sigma_m=\sum_{j=1}^m \text{Cov}(\vec W_j)\ .
$$
Let $\lambda_m$ be the smallest eigenvalue of $\Sigma_m$ and $Q_m$ be the law of $\vec W_1+\dots + \vec W_m$. 

There exists an absolute constant $c$ depending only on the dimension $d$ such that
$$
\sup_{A\in\mathcal A} \Big| Q_m(A ) - \eta_{\vec \mu_m,\Sigma_m}(A)\Big| \leq c \lambda_m^{-3/2} \sum_{j=1}^m E[\|\vec W_j - E[\vec W_j ]\|^3]\ .
$$
where $\mathcal A$ is the collection of Borel measurable convex subsets of $\R^d$. 
\end{lem}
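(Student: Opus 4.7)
The statement is presented in the paper as a citation of Bhattacharya--Rao (and of Götze), so a self-contained proof is not required; I would only indicate the standard route. The plan is first to reduce to the isotropic case by centering each $\vec W_j$ and applying the linear map $\Sigma_m^{-1/2}$ to $\vec W_j - E[\vec W_j]$. The resulting vectors $\vec W_j' = \Sigma_m^{-1/2}(\vec W_j - E[\vec W_j])$ have mean zero, $\sum_j \text{Cov}(\vec W_j')=I$, and
\begin{equation*}
\beta := \sum_j E\left[\|\vec W_j'\|^3\right] \leq \lambda_m^{-3/2}\sum_j E\left[\|\vec W_j-E[\vec W_j]\|^3\right],
\end{equation*}
because the operator norm of $\Sigma_m^{-1/2}$ equals $\lambda_m^{-1/2}$. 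After this reduction, the task is to bound $\sup_{A\in\mathcal A}|Q'(A)-\eta_{0,I}(A)|$ by a constant multiple of $\beta$.

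The next step is to compare characteristic functions. Writing $\hat Q'(t)=\prod_j \hat\varphi_j(t)$ and Taylor-expanding each $\log\hat\varphi_j(t)$ to third order, the quadratic terms sum to $-\|t\|^2/2$ by the identity-covariance normalization, and the cubic residual is controlled by the third moments. This yields a pointwise bound of the form $|\hat Q'(t) - e^{-\|t\|^2/2}|\leq C\beta\|t\|^3 e^{-\|t\|^2/4}$ valid in the moderate range $\|t\|\leq c\beta^{-1/3}$, supplemented by a crude tail estimate on $\hat Q'$ outside this range.

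The final step, and the main obstacle, is the passage from closeness of characteristic functions to uniform closeness on the class $\mathcal A$ of convex sets. For half-spaces alone, the multivariate Esseen smoothing inequality would suffice, but uniformity over all convex $A$ requires the Bhattacharya--Rao machinery: I would convolve both $Q'$ and $\eta_{0,I}$ with a smooth bump of scale $\epsilon$, transfer the Fourier bound to the convolved measures evaluated on $A\in\mathcal A$ by Fourier inversion, and then undo the smoothing by means of the Gaussian anti-concentration estimate
\begin{equation*}
\eta_{0,I}\left(\{x:\mathrm{dist}(x,\partial A)<\epsilon\}\right)\leq C_d\,\epsilon,
\end{equation*}
valid uniformly over convex $A$. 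Optimizing $\epsilon$ against $\beta$ produces the claimed estimate. The step that genuinely uses convexity, and which is responsible for the dimension-dependence of the constant $c$ in the lemma, is precisely this isoperimetric-type bound on $\epsilon$-neighborhoods of convex boundaries; without it one cannot obtain uniformity over the very large class $\mathcal A$.
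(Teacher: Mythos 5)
The paper does not prove this lemma; it is cited verbatim from Bhattacharya--Rao (Corollary 17.2) and G\"otze (Theorem 1.3), with no argument given. You correctly recognize this, and your sketch of the standard route is accurate: the reduction via $\Sigma_m^{-1/2}$ with the bound $\|\Sigma_m^{-1/2}\|_{\mathrm{op}} = \lambda_m^{-1/2}$ correctly explains the factor $\lambda_m^{-3/2}$, the characteristic-function comparison in the moderate range $\|t\|\lesssim\beta^{-1/3}$ is the right intermediate step, and you correctly identify that the heart of the matter --- and the source of the dimension dependence --- is the Gaussian anti-concentration bound $\eta_{0,I}(\{x:\mathrm{dist}(x,\partial A)<\epsilon\})\leq C_d\,\epsilon$ uniform over convex $A$, which is what makes the Esseen smoothing argument close over the class $\mathcal A$. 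One minor technical point: expanding $\log\hat\varphi_j(t)$ directly requires knowing $\hat\varphi_j$ is nonvanishing, which in Bhattacharya--Rao is circumvented by expanding $\hat\varphi_j$ itself and using truncation; your restriction to the range $\|t\|\leq c\beta^{-1/3}$ is where this is implicitly handled, but it is worth flagging. Your summary is a fair account of the cited proof and is consistent with what the paper relies on.
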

Before stating the results, we recall the notation from Section \ref{sect: ld}: \
 $\mathbb{Q}_{\vec{\lambda}}$ is the product measure from \eqref{eqn: RN1 and RN2} and
for fixed $h_1, h_2\in \mathbb{R}$, we write $\vec{Y}_{k}=\left(Y_{k}\left(h_{1}\right),Y_{k}\left(h_{2}\right)\right)$,
$\vec{X}_{r,n}=\left(X_{r,n}\left(h_{1}\right),X_{r,n}\left(h_{2}\right)\right)$.
We show that beyond the decoupling point $h_1\wedge h_2+\Delta$, the increments under $\mathbb{Q}_{\vec{\lambda}}$ are close (in terms of $\Delta$) to being independent Gaussians
with mean $\lambda \sigma^2$ and variance $\sigma^2 = (\log 2)/2$.
\begin{prop}
\label{prop: two point gaussian comparison}
Let $\lambda \in \R$ and $\Delta > 0$. Let $h_{1},h_{2}\in \mathbb{R}$, $m\geq h_1\wedge h_2\ +\Delta$ and $\mu=\lambda \sigma^2$.
For any convex subsets $A_{k}\subseteq\mathbb{R}^{2},k=m+1,\ldots,n$, we have
\begin{equation}
\begin{aligned}
&\mathbb{Q}_{\vec{\lambda}}\left[\vec{X}_{m,k}\in A_{k}\ \forall m<k\leq n\right]\\
&=\big(1+O(e^{-c\Delta})\big)\  \eta_{\mu,\sigma^{2}}^{\times2\left(n-m\right)}\left\{ \vec{y}\in \mathbb{R}^{2\times(n-m)}:\ \sum_{j=1}^{k}\vec{y}_{j}\in A_{k+m}\ \forall k=1,\dots,n-m\right\}+O(e^{-e^{c\Delta}}) \ ,
\end{aligned}
\end{equation}
where $\eta_{\mu,\sigma^{2}}^{\times2\left(n-m\right)}$ denotes the product measure (on $2(n-m)$ independent Gaussians each with mean $\mu$ and variance $\sigma^2$).
\end{prop}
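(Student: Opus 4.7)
The plan is a three-step Gaussian approximation: decompose $\vec{V}=(\vec{Y}_{m+1},\dots,\vec{Y}_n)$ into independent per-prime contributions under $\mathbb{Q}_{\vec{\lambda}}$, apply multivariate Berry--Esseen to approximate its law by a Gaussian, and finally compare this Gaussian with the target iid product. The whole argument is set up in $\mathbb{R}^{2(n-m)}$, where the event in question turns out to be convex.

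For the first step, I would observe that since the Radon--Nikodym derivative in \eqref{eqn: RN1 and RN2} factorizes across $k$, with each factor depending only on the $U_p$ with $2^{k-1}<\log p\le 2^k$, the increments $\vec{Y}_k$ remain mutually independent under $\mathbb{Q}_{\vec{\lambda}}$. Each $\vec{Y}_k$ is itself a sum over independent per-prime contributions, so $\vec{V}$ admits a decomposition $\sum_{p:\log p>2^m}\vec{Z}_p$ with $\mathbb{Q}_{\vec{\lambda}}$-independent summands. Crucially, the event $\{\vec{X}_{m,k}\in A_k\ \forall m<k\le n\}$ is a convex subset of $\mathbb{R}^{2(n-m)}$, since $\vec{X}_{m,k}=\sum_{j=m+1}^k\vec{Y}_j$ is a linear function of $\vec{V}$ and convexity is preserved under preimages of linear maps and finite intersections. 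This is what allows the later application of Lemma \ref{lem: normal approx}, which only gives uniform bounds on convex events.

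For the second step, I would apply Lemma \ref{lem: normal approx} to the sum $\sum_p\vec{Z}_p$ to replace the law of $\vec{V}$ under $\mathbb{Q}_{\vec{\lambda}}$ by the Gaussian $\eta_{\vec{\mu}^*,\vec{\Sigma}^*}$ with matching mean and covariance. The third moment $\mathbb{Q}_{\vec{\lambda}}[\|\vec{Z}_p\|^3]$ is $O(p^{-3/2})$, so the sum over $\log p>2^m$ is $O(e^{-c\, 2^m})$ by \eqref{eq: PNT}. Since one may assume $h_1\wedge h_2\ge 0$ by translation, this is $O(e^{-e^{c\Delta}})$ once the dimension-dependent Berry--Esseen constant is absorbed, producing the additive error. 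The smallest eigenvalue of $\vec{\Sigma}^*$ is bounded below uniformly: by Proposition \ref{prop: two point ld} each $2\times 2$ block equals $\vec{\Sigma}_k+O(e^{-2^{k-1}})$, and by Lemma \ref{lem: primes} its eigenvalues $\sigma_k^2\pm\rho_k$ both lie within $O(e^{-c\Delta})$ of $\sigma^2$.

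For the third step, I would compare $\eta_{\vec{\mu}^*,\vec{\Sigma}^*}$ with the iid product $\eta_{\mu,\sigma^2}^{\times 2(n-m)}$, exploiting that both are products of $2\times 2$ Gaussians indexed by $k$. Proposition \ref{prop: two point ld} and Lemma \ref{lem: primes} give that the $k$-th block mean is $\vec{\Sigma}_k\vec{\lambda}=\mu(1,1)+O(e^{-c\Delta}e^{-c(k-m)})$ and the $k$-th block covariance is $\sigma^2 I+O(e^{-c\Delta}e^{-c(k-m)})$, where the geometric decay in $k-m$ comes from the bound $\rho_k=O(2^{-(k-h_1\wedge h_2)})$. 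Computing the per-block Radon--Nikodym derivatives and summing yields $\log(d\eta_{\vec{\mu}^*,\vec{\Sigma}^*}/d\eta_{\mu,\sigma^2}^{\times 2(n-m)})=O(e^{-c\Delta})$ on typical configurations, which integrates against the target to give the multiplicative factor $1+O(e^{-c\Delta})$.

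The main obstacle I anticipate is making this last Gaussian-to-Gaussian comparison multiplicatively uniform over the event, since in the intended applications $\{\vec{X}_{m,k}\in A_k\ \forall k\}$ will be only polynomially small in $n$ (it encodes a ballot-type barrier). The crucial structural point is the geometric decay $O(e^{-c\Delta}e^{-c(k-m)})$ of the per-block parameter mismatches, which ensures the cumulative density-ratio perturbation telescopes to a single $O(e^{-c\Delta})$ rather than scaling with $n-m$, so the comparison remains tight independently of the event's probability.
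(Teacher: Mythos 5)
Your approach is genuinely different in one key step: you apply the multivariate Berry--Esseen theorem \emph{once} in dimension $2(n-m)$, after observing (correctly) that the event $\{\vec{X}_{m,k}\in A_k\ \forall m<k\le n\}$ is a convex subset of $\mathbb{R}^{2(n-m)}$. The paper instead applies Lemma \ref{lem: normal approx} in dimension $d=2$, successively for each $k$ from $n$ down to $m+1$, conditioning on the earlier increments and using that an intersection of translates of convex sets is convex. The reason for this apparently more laborious route is exactly the point you flag but do not resolve: the constant in Lemma \ref{lem: normal approx} \emph{depends on the ambient dimension $d$}, and in your formulation $d=2(n-m)$ grows with $n$. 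The additive error you obtain is therefore $c(n-m)\cdot e^{-2^{m-1}}$, where $c(d)$ is polynomial in $d$; for a fixed $\Delta$ and $m$ close to $\Delta$ this is \emph{not} $O(e^{-e^{c\Delta}})$ uniformly in $n$, so as written your argument does not yield the proposition's stated error bound. The sequential scheme keeps $d=2$ throughout (absolute constant), and the $n-m$ Berry--Esseen errors simply sum to the same $\sum_{\log p>2^m}p^{-3/2}=O(e^{-2^{m-1}})$, with no extra $n$-dependence. This is a genuine gap, not merely a stylistic difference, although in the paper's downstream applications (where $\Delta=r/100$ with $r=\lfloor(\log\log n)^2\rfloor$, making $e^{-e^{c\Delta}}$ superpolynomially small) a $\mathrm{poly}(n)$ degradation of the error would be harmless, so your approach could be salvaged by weakening the stated error term.

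The remaining steps match the paper's in spirit. The factorization of $\mathbb{Q}_{\vec{\lambda}}$ across scales preserving independence of the $\vec Y_k$, the block-diagonal structure of the covariance, the lower bound on eigenvalues via Lemma \ref{lem: primes} and Proposition \ref{prop: two point ld}, and the geometric decay of the block-parameter mismatches are all used identically. In the Gaussian-to-Gaussian comparison, your phrase ``on typical configurations'' correctly anticipates the need to restrict to a high-probability set; the paper makes this explicit by truncating to $E_k=\{\|\vec y-\tilde{\vec\mu}_k\|\le 2^{(k-h_1\wedge h_2)/4}\}$, bounding the density ratio on $E_k$, and folding $\eta[E_k^c]$ into the additive $O(e^{-e^{c\Delta}})$ term. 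You should spell this out: without the truncation, the pointwise density-ratio bound blows up in the tails and the multiplicative estimate fails, and the contribution of the bad set is precisely what produces the second source of additive error.
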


\begin{proof}
Recall that $\vec{Y}_{k}=\sum_{2^{k-1}<\log p\leq2^{k}}\vec{W}_{p}$
where $\vec{W}_{p}=\left(W_{p}\left(h_{1}\right),W_{p}\left(h_{2}\right)\right)$.
The proof has two steps. First, Lemma \ref{lem: normal approx} is applied successively for each $k$ from $k=n$ down to $k=m+1$ to pass to a Gaussian measure.
Then we explicitly compare the resulting Gaussian measure $\otimes_{k=m+1}^n\eta_{\tilde{\vec{\mu}}_k,\tilde \Sigma_k}$
(the product of $(n-m)$ bivariate Gaussians with means $\tilde{\vec\mu}_{k}=\mu_k\ (1,1)=\big(\mathbb{Q}_{\vec{\lambda}}\left[Y_{k}(h_1)\right],\mathbb{Q}_{\vec{\lambda}}\left[Y_{k}(h_2)\right]\big)$ and covariance matrices
$\tilde \Sigma_k=\mbox{Cov}_{\mathbb{Q}_{\vec{\lambda}}}\left[\vec{Y}_{k}\right]$), to the decoupled measure $\eta_{\mu,\sigma^{2}}^{\times2\left(n-m\right)}$. 

Conditioning on the values of $\vec{Y}_{j}$ for all $m+1 \leq j \leq n-1$, then
applying Lemma \ref{lem: normal approx} to the $\vec{W}_{p}$ with $2^{n-1}<\log p\leq2^{n}$,
and finally integrating over $\vec{Y}_{j}$  we obtain 
\begin{equation}
\begin{aligned}
\label{eq:berry-eseen 2D}
&\Big|\mathbb{Q}_{\vec{\lambda}}\left[\vec{X}_{m,k}\in A_{k}\ \forall m<k\leq n\right]\\
&-\mathbb{Q}_{\vec{\lambda}}\times\eta_{\tilde{\vec\mu}_{n},\tilde{\Sigma}_{n}}\left[\sum_{j=m+1}^{k}\vec{Y}_{j}\in A_{k}\ \forall m<k\leq n-2,
\sum_{j=m+1}^{n-1}\vec{Y}_{j}\in A_{n-1}\cap(A_{n}-\vec{y}_{n})\right] \Big|\\
&\le c\lambda_{n}^{-3/2}{\displaystyle \sum_{2^{n-1}<\log p\leq2^{n}}}\mathbb{Q}_{\vec{\lambda}}\left[\|\vec{W}_{p}-\mathbb{Q}_{\vec{\lambda}}\left[\vec{W}_{p}\right]\|^{3}\right],
\end{aligned}
\end{equation}
where $\vec{y}_{n}$ is sampled from $\eta_{\tilde{\vec\mu}_{n},\tilde{\Sigma}_{n}}$,  $\lambda_{n}$ is the smallest eigenvalue of $\tilde{\Sigma}_{n}$, and $A_{n}-\vec{y}_{n}$ is the set $A_{n}$ translated by $\vec{y}_{n}$.
Since an intersection of convex sets is convex, the lemma can be applied in the same way to the $\vec{W}_{p}$'s contributing to
$\vec{Y}_{n-1}$, $\vec{Y}_{n-2}$, and so on. The resulting estimate is then
\begin{equation}
\label{eq:berry-esseen 2d error term}
\begin{aligned}
&\Big |\mathbb{Q}_{\vec{\lambda}}\left[\vec{X}_{m,k}\in A_{k}\ \forall m<k\leq n\right]
-\otimes_{k=m+1}^{n}\eta_{\tilde{\vec \mu}_{k},\tilde{\Sigma}_{k}}\left\{ \vec{y}\in \mathbb{R}^{2\times(n-m)}:\ \sum_{j=m+1}^{k}\vec{y}_j \in A_{k}\ \forall k=m+1,\dots,n \right\} \Big|\\
& \hspace{2cm} \le c \sum_{k=m+1}^{n}\sum_{2^{k-1}<\log p\leq2^{k}}\lambda_{k}^{-3/2}\mathbb{Q}_{\vec{\lambda}}\left[\|\vec{W}_{p}-\mathbb{Q}_{\vec{\lambda}}\left[\vec{W}_{p}\right]\|^{3}\right].
\end{aligned}
\end{equation}
For $k>h_1\wedge h_2 + \Delta$, the eigenvalues $\lambda_k$ are uniformly bounded away from $0$.
Indeed, observe that by \eqref{eq: two point mean and variance}, and the discussion preceding \eqref{eqn: psi}, and Lemma \ref{lem: primes},
$$
\lambda_{k} = \sigma_{k}^{2} - \rho_{k} + O\left(e^{-2^{k-1}}\right) = \sigma^2+O(e^{-c\sqrt{2^k}} + e^{-c\Delta})\ge c> 0,
$$
for $\Delta$ large enough but fixed. Also by construction, the norm of the vector $\vec{W}_{p}$ is bounded by $cp^{-1/2}$. 
Hence the error term in \eqref{eq:berry-esseen 2d error term} is bounded by 
\begin{equation}
\label{eqn: gaussian error 1}
c\sum_{2^{m}<\log p\leq2^{n}}p^{-3/2}\le c e^{-2^{m-1}} \le e^{-e^{c\Delta}}.
\end{equation}

It remains to compare the measure $\otimes_{k=m+1}^{n} \eta_{\tilde{\vec\mu}_{k}, \tilde \Sigma_k}$ with the measure $\eta_{\mu,\sigma^2}^{\times 2(n-m)}$.
The specifics of the considered event play no role at this point, so we write $B$ for a generic measurable subset of $\R^2$. 
We show
\begin{equation}
\label{eqn: gaussian error 2}
\eta_{\tilde{\vec\mu}_{k}, \tilde \Sigma_k}[B]
=\big(1+O(e^{-c(k-h_1\wedge h_2)})\big) \ \eta^{}_{\mu,\sigma^2}[B] + O(e^{-e^{c(k-h_1\wedge h_2)}})\ , \text{ $\forall k>m$.}
\end{equation}
Together with \eqref{eqn: gaussian error 1} and \eqref{eq:berry-esseen 2d error term}, this implies the proposition since the estimate \eqref{eqn: gaussian error 2} can be applied successively integrating in each coordinate to get for any $A\subseteq \R^{2(n-m)}$
$$
\begin{aligned}
\otimes_{k=m+1}^n\eta_{\tilde{\vec\mu}_{k}, \tilde \Sigma_k}[A]
&=\prod_{k=m+1}^n \big(1+O(e^{-c(k-h_1\wedge h_2)})\big) \eta^{\times 2(n-m)}_{\mu,\sigma^2}[A] + \sum_{k=m+1}^nO(e^{-e^{c(k-h_1\wedge h_2)}})\\
&=\big(1+O(e^{-c\Delta})\big)  \eta^{\times 2(n-m)}_{\mu,\sigma^2}[A] + O(e^{-e^{c\Delta}})\ .
\end{aligned}
$$

To prove \eqref{eqn: gaussian error 2}, we compare densities.
Proposition \ref{prop: two point ld} and Lemma \ref{lem: primes} give
\begin{equation}
\label{eqn: approx mu sigma}
\begin{aligned}
\mu_{k} &=\mu +O(2^{-(k-h_1\wedge h_2)}), \qquad
\tilde \Sigma_k&= \sigma^2 \mathbbm{1} +O(2^{-(k-h_1\wedge h_2)})\ ,
\end{aligned}
\end{equation}
where $\mathbbm{1}$ is the $2\times 2$ identity matrix, using that $k>m>h_1\wedge h_2+\Delta$.
Consider the set,
$$
E_k=\{\vec{y}\in \R^2: \| \vec{y}-\tilde{\vec\mu}_{k}\| \leq  2^{(k-h_1\wedge h_2)/4}\}\ .
$$
A straightforward Gaussian estimate yields
$$
\eta_{\tilde{\vec\mu}_{k}, \tilde \Sigma_k}[E_k^c]\leq  \exp\left(-c\frac{2^{(k-h_1\wedge h_2)/2}}{\sigma^2} \right)\leq e^{-e^{c(k-h_1\wedge h_2)}} \ ,
$$
and similarly for $\eta_{\mu, \sigma^2}^{\times 2}[E_k^c]$. Therefore, it suffices to prove \eqref{eqn: gaussian error 2} for $B \subset E_k$.
The density of $\eta_{\tilde{\vec{\mu}}_k, \tilde{\Sigma}_k}$ with respect to Lebesgue measure is,
\begin{equation}\label{eq: bivariate normal density}
 \frac{1}{2\pi (\mbox{det} \tilde{\Sigma}_k)^{1/2} }
e^{ - (\vec{y}-\tilde{\vec\mu}_{k})\cdot \tilde \Sigma_k^{-1}(\vec{y}-\tilde{\vec\mu}_{k})/2 }.
\end{equation}
By \eqref{eqn: approx mu sigma},
$$
(\det   \tilde \Sigma_k)^{-1/2}=\sigma^{-2} \big(1+O(2^{-(k-h_1\wedge h_2)})\big)\ .
$$
Furthermore for all $\vec{y}\in\R^2$,
$$
(\vec{y}-\tilde{\vec\mu}_{k})\cdot \tilde \Sigma_k^{-1}(\vec{y}-\tilde{\vec\mu}_{k})
= \sigma^{-2}\|\vec{y}-\tilde{\vec\mu}_{k}\|^2+(\vec{y}-\tilde{\vec\mu}_{k})\cdot (\tilde \Sigma_k^{-1}-\sigma^{-2}\mathbbm{1}) (\vec{y}-\tilde{\vec\mu}_{k})\ .
$$
By \eqref{eqn: approx mu sigma} and the definition of $E_k$, the error term is
$$
(\vec{y}-\tilde{\vec\mu}_{k})\cdot (\tilde \Sigma_k^{-1}-\sigma^{-2}\mathbbm{1}) (\vec{y}-\tilde{\vec\mu}_{k})=O(2^{-(k-h_1\wedge h_2)/4})\ .
$$
Thus, on $E_k$, the density \eqref{eq: bivariate normal density} equals $\big(1+O(e^{-c(k-h_1\wedge h_2)})\big)\frac{1}{2\pi \sigma^2}e^{-||\vec{y}-\tilde{\vec\mu}_{k}||^{2}/2}$.
In particular,
$$\eta_{\tilde{\vec\mu}_{k}, \tilde \Sigma_k}[B]
=\big(1+O(e^{-c(k-h_1\wedge h_2)})\big) \ \eta^{\times 2}_{\tilde{\mu}_k,\sigma^2}[B]\ \mbox{ for any } B \subset E_k.
$$
It remains to compare the densities of $\eta_{\tilde{\mu}_k,\sigma^2}$ and $\eta_{\mu,\sigma^2}$.
We have that
$$
(y-\tilde\mu_k)^2=(y-\mu)^2+(\tilde\mu_k-\mu)^2 -2(y-\mu)(\tilde\mu_k-\mu)\ .
$$
The second term is $O(2^{-(k-h_1\wedge h_2)})$ by \eqref{eqn: approx mu sigma}.
The third term can be estimated using the fact that $|y-\mu_k|=O(2^{(k-h_1\wedge h_2)/4})$:
$$
|(y-\mu)(\tilde \mu_k-\mu)|
\leq (|y-\tilde \mu_k|+|\mu_k-\mu|)|\mu_k-\mu|
=O(2^{-3(k-h_1\wedge h_2)/4})\ .
$$
This implies that on $B \subset E_k$
$$
\eta^{\times 2}_{\tilde{\mu}_k,\sigma^2}[B]
=\big(1+O(e^{-c(k-h_1\wedge h_2)})\big) \ \eta^{\times 2}_{\mu,\sigma^2}[B]\ .
$$
This concludes the proof of the claim  \eqref{eqn: gaussian error 2}.
\end{proof}
The next proposition provides a Gaussian comparison before the branching point. The proof is omitted, as it follows the previous
one closely, with $\mu$ replaced by $2 \lambda \sigma^2$ in \eqref{eqn: approx mu sigma}.
\begin{prop}
\label{prop: two point gaussian comparison before branch}
Let $\lambda \in \R$ and $\Delta > 0$. Let $h_{1},h_{2}\in \mathbb{R}$, $m\leq h_1\wedge h_2\ - \Delta$ and $\mu=2\lambda \sigma^2$.
For any convex subsets $A_{k}\subseteq\mathbb{R}^2,k=m+1,\ldots,n$, we have
\begin{equation}
\begin{aligned}
&\mathbb{Q}_{\vec{\lambda}}\left[\vec{X}_{m,k}(h_1) \in A_{k}\ \forall m<k\leq n\right]\\
&=\big(1+O(e^{-c\Delta})\big)\  \eta_{\mu,\sigma^{2}}^{\times 2\left(n-m\right)}\Big\{ \vec{y}\in \mathbb{R}^{\times2(n-m)}:\ \sum_{j=1}^{k}y_{j}\in A_{k+m}\ \forall k=1,\dots,n-m\Big\}+O(e^{-e^{c\Delta}}) \ .
\end{aligned}
\end{equation}
\end{prop}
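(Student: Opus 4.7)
The plan is to follow the proof of Proposition \ref{prop: two point gaussian comparison} step by step, merely replacing the ``after branching'' half of Lemma \ref{lem: primes} with its ``before branching'' counterpart. Concretely, I would first apply the multivariate Berry--Esseen bound (Lemma \ref{lem: normal approx}) inductively from $k=n$ down to $k=m+1$ under the tilted measure $\mathbb{Q}_{\vec{\lambda}}$ with $\vec{\lambda}=(\lambda,\lambda)$, replacing the increments $\vec{Y}_{k}$ by bivariate Gaussians matching their mean and covariance; and then compare the resulting Gaussian product to the target $\eta_{\mu,\sigma^{2}}^{\times 2(n-m)}$ by the same density-ratio computation as around \eqref{eqn: gaussian error 2}.

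The only substantive change is the analogue of \eqref{eqn: approx mu sigma}. Combining Proposition \ref{prop: two point ld} with the $k\le h_{1}\wedge h_{2}$ case of Lemma \ref{lem: primes} yields $\sigma_{k}^{2}=\sigma^{2}+O(e^{-c\sqrt{2^{k}}})$ and $\rho_{k}=\sigma^{2}+O(2^{-2(h_{1}\wedge h_{2}-k)})+O(e^{-c\sqrt{2^{k}}})$, so with $\vec{\lambda}=(\lambda,\lambda)$ the tilted mean is
\[
\tilde{\vec{\mu}}_{k} \;=\; (\sigma_{k}^{2}+\rho_{k})\lambda\,(1,1) + O\bigl(e^{-2^{k-1}}\bigr) \;=\; \mu\,(1,1) + O\bigl(2^{-2(h_{1}\wedge h_{2}-k)}\bigr),
\]
with $\mu=2\lambda\sigma^{2}$, which is precisely the promised substitution. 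The tilted covariance $\tilde{\Sigma}_{k}$ has eigenvalues $\sigma_{k}^{2}+\rho_{k}\approx 2\sigma^{2}$ along the diagonal direction $(1,1)$ and $\sigma_{k}^{2}-\rho_{k}=O(2^{-2(h_{1}\wedge h_{2}-k)})$ along the antidiagonal direction $(1,-1)$.

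The main obstacle is precisely this near-degeneracy of $\tilde{\Sigma}_{k}$: its smallest eigenvalue is of order $2^{-2\Delta}$, so the factor $\lambda_{k}^{-3/2}$ in Lemma \ref{lem: normal approx} would blow up under a direct two-dimensional application. Because the event in the statement constrains only the $h_{1}$-coordinate $X_{m,k}(h_{1})$, I would sidestep this by projecting and applying the one-dimensional Berry--Esseen bound to the marginal $Y_{k}(h_{1})$ under $\mathbb{Q}_{\vec{\lambda}}$; its mean $(\sigma_{k}^{2}+\rho_{k})\lambda\approx\mu$ and variance $\sigma_{k}^{2}\approx\sigma^{2}$ match the target exactly. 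As in \eqref{eqn: gaussian error 1}, the Berry--Esseen cost from summing $p^{-3/2}$ over $2^{m}<\log p\le 2^{n}$ is $O(e^{-2^{m-1}})\le e^{-e^{c\Delta}}$, supplying the additive error in the conclusion.

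The final Gaussian-to-Gaussian comparison proceeds verbatim as in the paragraph containing \eqref{eqn: gaussian error 2}: restrict to the ball of radius $2^{(h_{1}\wedge h_{2}-k)/4}$ around $\tilde{\mu}_{k}$, on whose complement both Gaussians have mass at most $e^{-e^{c(h_{1}\wedge h_{2}-k)}}$; then Taylor-expand the density of $\eta_{\tilde{\mu}_{k},\sigma_{k}^{2}}$ against that of $\eta_{\mu,\sigma^{2}}$, producing a per-scale factor $1+O(2^{-c(h_{1}\wedge h_{2}-k)})$. Summing geometrically over $k=m+1,\dots,n$ and combining with the Berry--Esseen errors gives the stated multiplicative $(1+O(e^{-c\Delta}))$ correction and the additive $O(e^{-e^{c\Delta}})$ term.
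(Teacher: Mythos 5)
Your proposal is correct, and it goes a step further than the paper's terse note (``the proof is omitted, as it follows the previous one closely, with $\mu$ replaced by $2\lambda\sigma^2$''), which on its own is somewhat misleading. You correctly identify that a literal repetition of Proposition~\ref{prop: two point gaussian comparison}'s proof --- applying the two-dimensional Berry--Esseen bound of Lemma~\ref{lem: normal approx} to $\vec{Y}_k$ --- would not give uniform control here: for $k\le h_1\wedge h_2$ one has $\sigma_k^2-\rho_k\asymp 2^{-2(h_1\wedge h_2-k)}$ (by \eqref{eqn: correlation estimates}, or directly from $\sigma_k^2-\rho_k=\sum_p\frac{1}{2p}(1-\cos(|h_1-h_2|\log p))$), so the factor $\lambda_k^{-3/2}$ in Lemma~\ref{lem: normal approx} is of order $2^{3(h_1\wedge h_2-k)}$ and the per-scale Berry--Esseen error need not be small. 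Your fix --- observing that the event only constrains the $h_1$-coordinate, that the $W_p(h_1)$ remain independent over $p$ under the product measure $\mathbb{Q}_{\vec\lambda}$, and applying the univariate Berry--Esseen bound to $Y_k(h_1)$ whose tilted variance $\sigma_k^2+O(e^{-2^{k-1}})$ is bounded away from zero --- is exactly the right move and recovers the clean additive error $O(e^{-2^{m-1}})$. (This also reflects what the proposition's conclusion really is: despite the $\eta_{\mu,\sigma^2}^{\times 2(n-m)}$ notation, which appears to be carried over from Proposition~\ref{prop: two point gaussian comparison}, the statement is used in Lemma~\ref{lem:Coupled Bound} as a one-dimensional comparison for $X_{r,k}(h_1)$ alone, with a product of $n-m$ Gaussians.) Your computation of the tilted mean $(\sigma_k^2+\rho_k)\lambda=\mu+O(2^{-2(h_1\wedge h_2-k)})$ gives precisely the replacement the paper alludes to, and the Gaussian-to-Gaussian density comparison with truncation radius $2^{(h_1\wedge h_2-k)/4}$ is a faithful mirror of the argument around \eqref{eqn: gaussian error 2}, with the sign of $k-h_1\wedge h_2$ flipped. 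One small remark: as in the paper, the $O(e^{-e^{c\Delta}})$ additive error coming from the Berry--Esseen step really scales like $O(e^{-e^{cm}})$ (just as in Proposition~\ref{prop: one point gaussian comparison}); in the application one has $m=r=100\Delta$, so this distinction is harmless, but it is worth keeping in mind that the additive error depends on $m$ rather than on $\Delta$ alone.
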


A one-point Gaussian approximation for the
measure $\mathbb{Q}_{\lambda}$ from  \eqref{eqn: RN1 and RN2}
 will also be needed. The proof is again similar to the proof of Proposition \ref{prop: two point gaussian comparison} and is omitted.
One noticeable difference is in \eqref{eqn: approx mu sigma} where the covariance estimate is
replaced by $\sigma_k^2=\sigma^2+O(e^{-e^{ck}})$ because of \eqref{eqn: sigmak estimate}.
The additive error $e^{-e^{c\Delta}}$ is then replaced by $e^{-e^{cm}}$. The multiplicative
error $1+O(e^{-c\Delta})$ becomes $1+O(e^{-e^{cm}})$, and can thus be ``absorbed'' in the additive error.

\begin{prop}
\label{prop: one point gaussian comparison}
Let $\lambda \in \R$, $h\in \mathbb{R}$, $0 \le m < n$
and  $\mu=\lambda \sigma^2$.
For any convex subsets $A_{k}\subseteq\mathbb{R},k=m+1,\ldots,n$, we have
\begin{equation}
\begin{aligned}
&\mathbb{Q}_{\lambda}\left[X_{m,k}(h)\in A_{k}\ \forall m<k\leq n\right]\\
&\hspace{1cm} = \eta^{\times (n-m)}_{\mu,\sigma^{2}}\left\{ y\in \mathbb{R}^{\times(n-m)}:\ \sum_{j=1}^{k}y_{j}\in A_{k+m}\ \forall k=1,\dots,n-m\right\}+O(e^{-e^{cm}}) \ .
\end{aligned}
\end{equation}
\end{prop}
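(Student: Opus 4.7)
The plan is to imitate the proof of Proposition \ref{prop: two point gaussian comparison} using the univariate version of Lemma \ref{lem: normal approx} and the sharper one-dimensional estimates available from Lemma \ref{lem: primes} and Proposition \ref{prop: one point cgf}. First I would apply the $d=1$ case of Lemma \ref{lem: normal approx} successively for $k=n,n-1,\ldots,m+1$. Conditioning on $(Y_j)_{m<j<k}$, the sum $Y_k=\sum_{2^{k-1}<\log p\leq 2^k}W_p(h)$ is an independent sum of bounded random variables under $\mathbb{Q}_\lambda$, and the lemma approximates its law by the Gaussian with mean $\tilde\mu_k=\mathbb{Q}_\lambda[Y_k]$ and variance $\tilde\sigma_k^2=\mathrm{Var}_{\mathbb{Q}_\lambda}(Y_k)$. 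In $\mathbb{R}$ every relevant set is an interval, so the convexity requirement of the lemma holds automatically for the iterated intersections that appear. By Proposition \ref{prop: one point cgf} the variance $\tilde\sigma_k^2$ is bounded below by a positive constant uniformly in $k$, and $|W_p|\leq p^{-1/2}$, so the cumulative Berry--Esseen error is bounded by
\begin{equation*}
c\sum_{k=m+1}^n\sum_{2^{k-1}<\log p\leq 2^k}p^{-3/2}\leq ce^{-2^{m-1}}\leq e^{-e^{cm}}.
\end{equation*}

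Next I would compare the resulting product Gaussian measure $\bigotimes_{k=m+1}^n\eta_{\tilde\mu_k,\tilde\sigma_k^2}$ with the target $\eta_{\mu,\sigma^2}^{\times(n-m)}$. By Proposition \ref{prop: one point cgf} and \eqref{eqn: sigmak estimate} we have $\tilde\mu_k=\mu+O(e^{-2^{k-1}})$ and $\tilde\sigma_k^2=\sigma^2+O(e^{-c\sqrt{2^k}})$, and after adjusting constants both errors may be written as $O(e^{-e^{ck}})$ (note $e^{-c\sqrt{2^k}}=e^{-c\,e^{(k/2)\log 2}}$). Mimicking the density comparison leading to \eqref{eqn: gaussian error 2}, I would introduce the set $E_k=\{y\in\mathbb{R}:|y-\tilde\mu_k|\leq e^{ck/4}\}$, on which both Gaussians put all but $e^{-e^{c'k}}$ of their mass, and expand
\begin{equation*}
\frac{(y-\tilde\mu_k)^2}{2\tilde\sigma_k^2}-\frac{(y-\mu)^2}{2\sigma^2}
\end{equation*}
together with the ratio of normalisation constants. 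Using $|y-\tilde\mu_k|\leq e^{ck/4}$ and $|\tilde\mu_k-\mu|=O(e^{-e^{ck}})$ yields $\eta_{\tilde\mu_k,\tilde\sigma_k^2}[B]=(1+O(e^{-e^{ck}}))\eta_{\mu,\sigma^2}[B]+O(e^{-e^{ck}})$ for every Borel $B\subseteq\mathbb{R}$.

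Applying this one coordinate at a time over $k=m+1,\ldots,n$ yields an overall multiplicative factor $\prod_{k>m}(1+O(e^{-e^{ck}}))=1+O(e^{-e^{cm}})$ and a summed additive error $\sum_{k>m}O(e^{-e^{ck}})=O(e^{-e^{cm}})$, which combined with the Berry--Esseen error from the first step gives the one-dimensional analogue of the conclusion of Proposition \ref{prop: two point gaussian comparison}. Since every probability is at most $1$, the multiplicative factor $1+O(e^{-e^{cm}})$ can be absorbed into the additive $O(e^{-e^{cm}})$, recovering the statement of Proposition \ref{prop: one point gaussian comparison}. The delicate step is the density expansion, where one must control the normalisation ratio together with the cross term $-(y-\mu)(\tilde\mu_k-\mu)/\sigma^2$ on $E_k$; the iterated-exponential closeness of $(\tilde\mu_k,\tilde\sigma_k^2)$ to $(\mu,\sigma^2)$ guarantees each contribution is $O(e^{-e^{c'k}})$, which is what drives the improvement from the $e^{-e^{c\Delta}}$ additive error of Proposition \ref{prop: two point gaussian comparison} to the $e^{-e^{cm}}$ additive error here.
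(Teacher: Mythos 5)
Your proposal follows exactly the route the paper indicates for the omitted proof: apply the $d=1$ Berry--Esseen lemma successively exactly as in Proposition~\ref{prop: two point gaussian comparison}, then compare densities, with the crucial change being that the sharper estimate $\sigma_k^2=\sigma^2+O(e^{-c\sqrt{2^k}})=\sigma^2+O(e^{-e^{ck}})$ from \eqref{eqn: sigmak estimate} and Proposition~\ref{prop: one point cgf} replaces the $O(2^{-(k-h_1\wedge h_2)})$ decay, so both the additive and multiplicative errors become $O(e^{-e^{cm}})$ and the latter is absorbed into the former. This is correct and matches the paper's intended argument.
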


\subsection{Ballot theorem}
The ballot theorem provides an estimate
for the probability that a random walk stays below a certain value
and ends up in an interval. We state the case we need, which is that of Gaussian random walk with increments
of mean $0$ and variance $\sigma^2$.
\begin{lem}
\label{lem:ballot theorem}
Let $\left(X_{n}\right)_{n\ge0}$ be a
Gaussian random walk with increments of mean $0$ and variance $\sigma^2 > 0$, with $X_0=0$. Let $\delta>0$.
 There is a constant $c=c(\sigma,\delta)$ such that for all $a>0$, $b \le a - \delta$ and $n\ge1$
\begin{equation}\label{eq:ballot theorem ub}
  P\left[X_{n}\in\left(b,b+\delta\right)\mbox{ and }X_{k}\le a \mbox{ for }0<k<n\right] \le c\frac{(1+a)(1+a-b)}{n^{3/2}}.
\end{equation}
Also provided $\delta<1$,
\begin{equation}\label{eq:ballot theorem lb}
  \frac{1}{cn^{3/2}} \le P\left[X_{n}\in\left(0,\delta\right)\mbox{ and }X_{k}\le 1 \mbox{ for }0<k<n\right].
\end{equation}
\end{lem}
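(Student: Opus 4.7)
The plan is to prove both bounds via Brownian-motion techniques, exploiting the fact that a Gaussian random walk $(X_k)_{0 \le k \le n}$ has the same law at integer times as $(\sigma B_k)$ for $(B_t)$ a standard Brownian motion, so that the reflection principle yields closed-form barrier probabilities for the continuous-time process.

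For the lower bound \eqref{eq:ballot theorem lb}, I would use the trivial inclusion $\{\max_{t \in [0,n]} \sigma B_t \le 1\} \subseteq \{X_k \le 1 \text{ for } 0 < k < n\}$, which holds because the continuous maximum dominates the discrete one. By the reflection principle,
\[
P\!\left[\sigma B_n \in (0,\delta),\ \max_{t \in [0,n]} \sigma B_t \le 1\right]
= \int_0^\delta \frac{1}{\sigma\sqrt{2\pi n}}\left(e^{-y^2/(2n\sigma^2)} - e^{-(2-y)^2/(2n\sigma^2)}\right)dy.
\]
Since $(2-y)^2 - y^2 = 4 - 4y \ge 4(1-\delta)$ for $y \in (0,\delta)$ and $1-e^{-x} \ge x/2$ for small $x>0$, the integrand is bounded below by $c/n^{3/2}$ on $(0,\delta)$, which gives the claim.

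For the upper bound \eqref{eq:ballot theorem ub}, the same inclusion runs the wrong way, so a direct reflection argument is not available. Instead, I would condition on the endpoint $X_n = y \in (b, b+\delta)$ and analyse the resulting Brownian bridge. The conditional law of $(X_k)_{0 \le k \le n}$ given $X_n = y$ is a Brownian bridge of variance $\sigma^2$ on $[0,n]$ sampled at integers, and I would bound the discrete barrier probability in terms of the continuous one using the explicit Brownian-bridge overshoot formula
\[
P\!\left[\max_{s \in [k,k+1]} \sigma B_s > a \,\Big|\, X_k, X_{k+1}\right] = \exp\!\left(-\tfrac{2(a-X_k)(a-X_{k+1})}{\sigma^2}\right), \quad X_k, X_{k+1} \le a.
\]
Averaging against the joint Gaussian density of $(X_k, X_{k+1})$ allows the overshoot contributions to be absorbed into a fixed multiplicative constant, yielding
\[
P[X_k \le a : 0 < k < n \mid X_n = y] \le c\big(1 - e^{-2a(a-y)/(n\sigma^2)}\big) \le c\cdot\frac{(1+a)(1+a-y)}{n},
\]
after replacing $a$ by $\max(a,1)$ and $a-y$ by $\max(a-y,1)$ to account for small parameters. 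Multiplying by the Gaussian endpoint density (at most $c/\sqrt{n}$) and integrating over $y \in (b,b+\delta)$ yields the desired bound $c(1+a)(1+a-b)/n^{3/2}$.

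The main technical difficulty is the discrete-to-continuous passage in the upper bound. A naive strategy—introducing a deterministic buffer to control the maximal overshoot of Brownian motion over its integer-time values—would contaminate the bound with a $\sqrt{\log n}$ factor from the union bound over $n$ subintervals. To recover the clean $(1+a)(1+a-b)/n^{3/2}$ rate, one must exploit the \emph{product} structure $(a-X_k)(a-X_{k+1})$ of the Brownian-bridge overshoot exponent (rather than the additive buffer estimate), which decays fast enough when averaged against the Gaussian joint density of consecutive walk values to convert the apparent logarithmic loss into an $n$-independent constant. This is a standard but delicate computation in the literature on random walks conditioned to avoid a barrier (cf.\ Addario-Berry--Reed \cite{addario-berry-reed-minima}).
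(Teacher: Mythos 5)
Your lower bound argument is correct and self-contained: the inclusion $\{\max_{t\in[0,n]}\sigma B_t\le 1\}\subseteq\{X_k\le 1,\ 0<k<n\}$, the reflection formula for the maximum of Brownian motion, and the elementary estimate $1-e^{-x}\ge x/2$ give exactly the $c^{-1}n^{-3/2}$ lower bound. The structure of your upper bound argument (embed in Brownian motion, condition on $X_n=y$, estimate the bridge barrier-avoidance probability, multiply by the $O(n^{-1/2})$ endpoint density) is the same as the paper's; the paper however simply invokes (6.4) of Webb \cite{WebbbExactAsymptoticsofFreezingTransitionOfLogCorrelatedREM} for the bridge estimate, whereas you attempt to rederive it from the bridge overshoot formula. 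That attempt, as written, has a genuine gap.

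The issue is quantitative. To compare the discrete barrier probability with the continuous one you union-bound over the $n$ subintervals $[k,k+1]$, on each retaining only the constraint $X_k,X_{k+1}\le a$ and multiplying by the overshoot probability $\exp(-2(a-X_k)(a-X_{k+1})/\sigma^2)$. But after dropping the barrier constraint on $\{0,\dots,n\}\setminus\{k,k+1\}$, the expectation $\mathbb{E}[\exp(-2(a-X_k)(a-X_{k+1})/\sigma^2)\mathbf{1}_{X_k,X_{k+1}\le a}]$ under the bridge law is of order $1/\sqrt{k\wedge(n-k)}$ (the dominant contribution comes from $a-X_k$ and $a-X_{k+1}$ of order one, and the marginal density of $X_k$ near $a$ under the bridge is $\sim (k\wedge(n-k))^{-1/2}$). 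Summing over $k$ gives a contribution of order $\sqrt{n}$, whereas you need order $(1+a)(1+a-y)/n$. The product structure of the overshoot exponent does not by itself convert a polynomial loss into a constant: the saving must come from retaining the barrier constraint on \emph{both} sides of $[k,k+1]$, which contributes ballot-type factors $\sim (a-X_k)/k$ and $\sim (a-X_{k+1})(a-y)/(n-k)$ and restores the $1/n$ rate after summation — but establishing these factors is essentially the statement being proved, so the argument has to be set up as a bootstrap or done from scratch as in the references. Relatedly, the displayed inequality $P[X_k\le a,\,0<k<n\mid X_n=y]\le c(1-e^{-2a(a-y)/(n\sigma^2)})$ is false for small $a$: as $a\downarrow 0$ the right side vanishes while the left side stays bounded below. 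The subsequent $\max(\cdot,1)$ replacement is the right fix for the final bound, but it is not produced by the overshoot argument you outline. In short, you correctly identify where the difficulty lies and cite appropriate literature for the rigorous version, but the sketch as given would yield the wrong order of magnitude.
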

\begin{proof}
Note that $(X_{k})_{0\le k\le n}$ has the law of $(\sigma B_{ k })_{0\le k\le n}$, where $(B_t)_{t\ge0}$ is standard Brownian motion.
Thus we see that the probability in \eqref{eq:ballot theorem ub} conditioned on $X_{n}=y$ can be written as the probability
that a Brownian bridge avoids a barrier at integer times. The bound (6.4) of \cite{WebbbExactAsymptoticsofFreezingTransitionOfLogCorrelatedREM} shows, after shifting by $a/\sigma$ and reflecting,
that this condidional probability is at most $c(1 + a/\sigma)(1 + (a-b-\delta)/\sigma)/n$. Noting that $P[ X_n \in (b,b+\delta) ] \le cn^{-1/2}$
then yields \eqref{eq:ballot theorem ub}. In a similar fashion the display below (6.4) in \cite{WebbbExactAsymptoticsofFreezingTransitionOfLogCorrelatedREM} gives \eqref{eq:ballot theorem lb}.
\end{proof}

\section{Proof of Theorem \ref{thm: main}}\label{sect: proof}
In this section, we prove \eqref{eq: main in terms of X}, that is,
\begin{equation}\label{eq: main in terms of X 2}
  \lim_{n \to \infty} \prob\left[ m_n(-\varepsilon) \le \max_{h\in [0,1]} X_n(h) \le m_n(\varepsilon) \right] = 1 \mbox{, for all }\varepsilon>0.
\end{equation}
This proves Theorem \ref{thm: main} for the subsequence $T=e^{2^n}$, $n\in\N$.
The extension of the argument to general sequences $T$ follows by trivial adjustments.
We will need to consider the process $X_{r,n}(h)$ with the first $r$ scales cutoff, see \eqref{eqn: process cutoff}.
Throughout this section we use
\begin{equation}\label{eq: def of r}
r=\lfloor\left(\log\log n\right)^{2}\rfloor.
\end{equation}
First we show that the difference between $\max_{h\in[0,1]}X_{r,n}\left(h\right)$ and $\max_{h\in[0,1]}X_{n}\left(h\right)$
is negligible compared to the subleading correction term.
\begin{lem}
\label{lem: taking care of up to r}
For all $\varepsilon>0$,
\begin{eqnarray}
 \lim_{n\to\infty}\mathbb{P}\left[\max_{h\in [0,1]}X_{n}\left(h\right)\ge m_{n}\left(2\varepsilon\right),\max_{h\in[0,1]}X_{r,n}\left(h\right)\le m_{n-r}(\varepsilon)\right]=0,\label{eq: taking care of up to r UB}\\
\lim_{n\to\infty}\mathbb{P}\left[\max_{h\in [0,1]}X_{n}\left(h\right)\le m_{n}\left(-2\varepsilon\right),\max_{h\in[0,1]}X_{r,n}\left(h\right)\ge m_{n-r}\left(-\varepsilon\right)\right]=0.\label{eq: taking care of up to r LB}
\end{eqnarray}
\end{lem}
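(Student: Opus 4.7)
The plan is to reduce the lemma to a simple tail estimate on $\max_{h\in[0,1]}|X_r(h)|$, exploiting the decomposition $X_n = X_r + X_{r,n}$. Both processes are continuous on $[0,1]$, so their maxima are attained; taking $h_1, h_2$ to be respective maximizers one gets $\max_h X_n(h) = X_r(h_1) + X_{r,n}(h_1) \le \max_h |X_r(h)| + \max_h X_{r,n}(h)$, and symmetrically $\max_h X_{r,n}(h) \le \max_h X_n(h) + \max_h |X_r(h)|$. Hence
\[
\bigl|\max_{h\in[0,1]} X_n(h) - \max_{h\in[0,1]} X_{r,n}(h)\bigr| \le \max_{h\in[0,1]} |X_r(h)|.
\]

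The first substantive step is to show that $\max_{h\in[0,1]}|X_r(h)| \le 3r\log 2$ holds with probability $1-o(1)$. Since $X_r = Y_0 + X_{0,r}$ with $|Y_0|\le 1/\sqrt{2}$ bounded, I would cover $[0,1]$ by $2^r$ intervals of length $2^{-r-1}$ and apply Corollary \ref{cor: tail of sup over interval} with $r'=0$, $k=r$, and $x$ a (sufficiently large) constant multiple of $r$, noting that the hypothesis $x\le C(k-r')$ is satisfied. The union bound then yields
\[
\mathbb{P}\bigl[\max_{h\in[0,1]} X_{0,r}(h) > 2r\log 2\bigr] \le c\cdot 2^r \exp\Bigl(-\tfrac{(2r\log 2)^2}{2r\sigma^2}\Bigr) = c\cdot 8^{-r},
\]
using $\sigma^2=(\log 2)/2$. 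The law of $(U_p)$ is invariant under $U_p\mapsto -U_p$, so $X_{0,r}$ and $-X_{0,r}$ share a distribution, and the identical estimate applies to $\max_h(-X_{0,r}(h))$, giving the claim.

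The second step is to compare the relevant windows. Using $\log(n-r)=\log n + o(1)$ and $r=\lfloor(\log\log n)^2\rfloor = o(\log n)$,
\[
m_n(2\varepsilon) - m_{n-r}(\varepsilon) = r\log 2 + \varepsilon\log n + O(1), \qquad m_{n-r}(-\varepsilon) - m_n(-2\varepsilon) = -r\log 2 + \varepsilon\log n + O(1),
\]
both of which exceed $3r\log 2$ once $n$ is large. On the event $\{\max_h|X_r(h)|\le 3r\log 2\}$, the triangle inequality above forces the event in \eqref{eq: taking care of up to r UB} to entail $\max_h X_{r,n}(h) \ge m_n(2\varepsilon) - 3r\log 2 > m_{n-r}(\varepsilon)$, a contradiction; and analogously the event in \eqref{eq: taking care of up to r LB} forces $\max_h X_n(h) \ge m_{n-r}(-\varepsilon) - 3r\log 2 > m_n(-2\varepsilon)$, again a contradiction. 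Both probabilities therefore vanish.

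I don't anticipate a serious obstacle. The only tension is in the choice of threshold: it must be large enough for the Gaussian-type factor $\exp(-x^2/(2r\sigma^2))$ to beat the $2^r$ from the union bound (which forces $x > r\log 2$), and yet small enough to be negligible compared to $\varepsilon\log n$. Both requirements hold with ample room because $r=(\log\log n)^2$ is tiny relative to $\log n$.
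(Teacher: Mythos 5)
Your proof is correct and follows essentially the same approach as the paper: decompose $X_n = X_r + X_{r,n}$, bound $\max_{h}|X_r(h)|$ by a constant multiple of $r$ with high probability using Corollary \ref{cor: tail of sup over interval} (the paper uses its consequence \eqref{eqn: max int} with $\delta=99$, you use the corollary directly with threshold $2r\log 2$), exploit the sign symmetry of the $U_p$'s, and observe that since $r = \lfloor(\log\log n)^2\rfloor = o(\log n)$ the gap between $m_n(\pm 2\varepsilon)$ and $m_{n-r}(\pm\varepsilon)$ grows like $\varepsilon\log n$ and eventually swamps $O(r)$. The paper phrases the reduction as ``the event forces $\max_h X_r(h)$ (or $-\min_h X_r(h)$) to be large,'' whereas you phrase it as the contrapositive ``on the high-probability event that $\max_h|X_r(h)|$ is small, the event cannot hold''; these are logically equivalent.
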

\begin{proof}
The event in the probability in \eqref{eq: taking care of up to r UB}
implies $\max_{h\in[0,1]}X_{r}\left(h\right)\ge\left(\log2\right)r+\varepsilon\log\left(n-r\right)\ge100\left(\log2\right)r$,
where the last inequality holds for $n$ large enough. 
But \eqref{eqn: max int}, with $n=r$, gives
\[
\mathbb{P}\left[\max_{h\in[0,1]}X_{r}\left(h\right)\ge100(\log 2) r\right]\leq 2^{-99r}\to0,\mbox{ as }r\to\infty.
\]
Since the laws of $\max_{h\in[0,1]}X_{r}\left(h\right)$
and $-\min_{h\in[0,1]}X_{r}\left(h\right)$, coincide we
also have that the probability
$
\mathbb{P}\left[\min_{h\in[0,1]}X_{r}\left(h\right)\le-100(\log 2)r\right]
$
tends to $0$ as $r\to\infty$, which similarly implies (\ref{eq: taking care of up to r LB}).
\end{proof}

In the proof of \eqref{eq: main in terms of X 2} we will use a change of measure under which the process $X_{r,n}$
has an upward drift of
\begin{equation}
  \mu(\varepsilon) = \frac{m_{n-r}(\varepsilon)}{n-r} = \frac{(n-r)\log 2 - \frac{3}{4} \log(n-r) + \epsilon \log(n-r)}{n-r} .
\end{equation}
We use the following consequence of \eqref{eqn: m_n definition} and \eqref{eq: changed variance} several times,
\begin{equation}
\label{eq: mu squared}
 \frac{\mu(\varepsilon)^2}{2\sigma^2} = \log2 - \left( \frac{3}{2} - 2\varepsilon \right) \frac{\log(n-r)}{n-r} + o(n^{-1}).
\end{equation}

\subsection{Proof of the upper bound}
\label{sect: upper}

In this section we prove the upper bound part of \eqref{eq: main in terms of X 2}.
By Lemma \ref{lem: taking care of up to r}, it suffices to prove the following upper bound for $\max_{h\in[0,1]}X_{r,n}\left(h\right)$.

\begin{prop}
\label{prop: Upper bound}
For all $\varepsilon>0$,
\begin{equation}
  \lim_{n\to\infty}\mathbb{P}\left[\max_{h\in[0,1]}X_{r,n}\left(h\right)\ge m_{n-r}(\varepsilon)\right]=0.
\end{equation}
\end{prop}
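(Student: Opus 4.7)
I would follow the multiscale first-moment scheme outlined in Section \ref{sect: outline}. First, apply Corollary \ref{cor: tail of sup over interval} with a union bound over the $2^n$ intervals of length $2^{-n}$ partitioning $[0,1]$: the fluctuation of $X_{r,n}$ within each such interval is $O(1)$ with probability $1-o(1)$, so it suffices to control the maximum over $h_j = j 2^{-n}$, $0 \le j < 2^n$. With $B = C \log n$ for a large constant $C$, introduce the barrier-restricted count
$$\widetilde Z = \#\bigl\{\, 0 \le j < 2^n : X_{r,n}(h_j) \ge m_{n-r}(\varepsilon),\ X_{r,k}(h_j) \le (k-r)\log 2 + B\ \forall\, r < k < n\,\bigr\}\,,$$
and let $\mathcal F$ denote the event that $X_{r,k}(i 2^{-k}) > (k-r)\log 2 + B$ for some $r < k < n$ and $0 \le i < 2^k$ (this is the natural discretization since $X_{r,k}$ only varies meaningfully at scale $2^{-k}$, again by Proposition \ref{prop: max inequality}). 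Then
$$\mathbb{P}\bigl[\max_{0\le j < 2^n} X_{r,n}(h_j) \ge m_{n-r}(\varepsilon)\bigr] \le \mathbb{E}[\widetilde Z] + \mathbb{P}[\mathcal F] + o(1),$$
so it remains to show both terms on the right are $o(1)$.

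For $\mathbb{P}[\mathcal F]$: by translation invariance, a union bound and Corollary \ref{cor: ld one point} give
$$\mathbb{P}[\mathcal F] \le \sum_{k=r+1}^{n} 2^k \exp\!\Bigl(-\tfrac{((k-r)\log 2 + B)^2}{2(k-r)\sigma^2}\Bigr) \le n \cdot 2^{r} \cdot e^{-2B},$$
where the last inequality uses $\sigma^2 = (\log 2)/2$ to expand the square (the cross term contributes exactly $2B$). This is $o(1)$ provided $C$ is large enough, using crucially that $r = (\log\log n)^2 = o(\log n)$ so $2^r$ is subpolynomial. For $\mathbb{E}[\widetilde Z] = 2^n \mathbb{P}[A]$ (where $A$ is the analogous event at $h_j = 0$, by translation invariance), I apply the tilt $\mathbb{Q}_\lambda$ of \eqref{eqn: RN1 and RN2} with $\lambda = \mu(\varepsilon)/\sigma^2$, and decompose the endpoint into unit intervals $X_{r,n}(0) \in [m + q, m + q + 1)$ for $q = 0, 1, \ldots$ with $m = m_{n-r}(\varepsilon)$. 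On each piece the Radon-Nikodym prefactor is bounded by $e^{-\lambda(m+q) + \sum_{k=r+1}^n \psi_k^{(1)}(\lambda)}$, which by Proposition \ref{prop: one point cgf} together with \eqref{eq: mu squared} equals $2^{-(n-r)}(n-r)^{3/2 - 2\varepsilon + o(1)} e^{-\lambda q}$. Under $\mathbb{Q}_\lambda$, Proposition \ref{prop: one point gaussian comparison} compares $(X_{r,r+j}(0))_{0 \le j \le n-r}$ to a Gaussian walk of step mean $\mu(\varepsilon)$ and variance $\sigma^2$ uniformly over convex events; the barrier-plus-endpoint event is an intersection of half-spaces in $\mathbb{R}^{n-r}$, hence convex. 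Centering out the drift $\mu(\varepsilon)$ turns this into a barrier problem for a centered Gaussian walk ending in $[q, q+1)$ with a barrier of intercept $B$ and slope $(\log 2 - \mu(\varepsilon)) = O(\log(n-r)/(n-r))$, contributing at most an extra $O(\log n)$. Lemma \ref{lem:ballot theorem} then bounds the resulting probability by $c(1+B)(1+B+q)/(n-r)^{3/2}$.

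Summing the bound on $\mathbb{P}[A \cap \{X_{r,n}(0) \in [m+q, m+q+1)\}]$ over $q$ (convergent since $\lambda \to 2$ and the ballot factor is polynomial in $q$), multiplying, and then multiplying by $2^n$ yields
$$\mathbb{E}[\widetilde Z] \lesssim 2^{r} \cdot (\log n)^{2} \cdot (n-r)^{-2\varepsilon + o(1)} = o(1),$$
as desired. The main obstacle is the Gaussian-walk reduction in this last step: one must (i) verify the convexity needed to apply Proposition \ref{prop: one point gaussian comparison} to the joint endpoint-plus-barrier event, (ii) absorb the slowly tilting barrier from centering into $B$ without spoiling the barrier-violation bound, and (iii) track the bookkeeping so that the $(n-r)^{3/2}$ gain of the tilted rate function over the naive union bound is precisely canceled by the $(n-r)^{-3/2}$ ballot factor, leaving the net $(n-r)^{-2\varepsilon}$ decay that overpowers the subpolynomial factor $2^{r}$ coming from truncating the first $r$ scales.
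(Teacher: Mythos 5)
Your proposal follows the paper's strategy closely: reduce to a discrete maximum via the chaining estimate, introduce a barrier-restricted count, and control its first moment via a change of measure, Gaussian comparison and the ballot theorem. The overall scheme is sound, but there is a concrete gap in the barrier step.

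To bound $\mathbb{P}[\mathcal F]$ you apply Corollary~\ref{cor: ld one point} with $x=(k-r)\log 2+B$, $B=C\log n$, for \emph{all} $k$ in $(r,n)$. That corollary (and likewise Corollary~\ref{cor: tail of sup over interval}) is only stated for $0<x<C'(k-r)$ with a \emph{fixed} constant $C'$: the proof tilts by $\lambda=x/((k-r)\sigma^2)$ and needs $\lambda$ bounded so that the quadratic expansion \eqref{eqn: one point cgf} of the cumulant generating function applies. With $B=C\log n$ and $k-r=O(1)$, you get $\lambda\sim\log n\to\infty$, so the cited estimate does not cover this range and the first inequality in your displayed bound on $\mathbb{P}[\mathcal F]$ is not justified as written. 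The paper sidesteps exactly this by imposing the barrier only for $k\ge\lfloor\log n\rfloor^2$, so that $k-r\gtrsim(\log n)^2$ and the intercept $(\log n)^2$ stays $O(k-r)$; the ``free'' initial segment $r<k<\lfloor\log n\rfloor^2$ is then absorbed by conditioning on its terminal value $q$ and taking a supremum over $q\in[-(\log n)^2,(\log n)^2]$ (see \eqref{eq: after rece}--\eqref{eq: after sup}). Your argument can be patched either by adopting such a restricted barrier range (and then reintroducing the free segment in the ballot step, which is where the paper's $(\log n)^6$ comes from), or by observing that after cutting off the first $r\sim(\log\log n)^2$ scales the remaining primes satisfy $p>e^{2^r}$, so the Bessel-function Taylor expansion in fact tolerates $\lambda$ as large as any fixed power of $\log n$ --- but that extension must be argued, not cited.

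A secondary, more cosmetic point: you define $\widetilde Z$ with the barrier evaluated at the fine points $h_j=j2^{-n}$ but $\mathcal F$ with the barrier violated at the coarse points $i2^{-k}$. Passing from one to the other is not automatic. The clean version is to take $\mathcal F$ to be a barrier violation anywhere on $[0,1]$ and bound it level by level with Corollary~\ref{cor: tail of sup over interval} (which controls the maximum over each $2^{-k}$-interval, not just the midpoint); this is exactly what Lemma~\ref{lem: construction fo barrier} does. The remaining bookkeeping in your $\mathbb{E}[\widetilde Z]$ estimate --- tilting, the convexity check, the slowly rising barrier after centering (slope $\log 2-\mu(\varepsilon)=O(\log(n-r)/(n-r))$, which you correctly absorb into the intercept), and the geometric sum over $q$ (treating $q$ above the end-barrier separately using the fast decay of $e^{-\lambda q}$) --- all check out and match the paper's.
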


The first step is to reduce the proof to a bound on the maximum over the discrete set $\mathcal H_n \cap [0,1]$
(as defined in \eqref{eqn: H})
using the continuity estimates from Section \ref{sect: continuity}.
\begin{lem}
\label{lem: continuity at level n}
For all $\varepsilon>0$,
\begin{equation}
\lim_{n\to\infty}\mathbb{P}\left[\max_{h\in [0,1]}X_{r,n}\left(h\right)\ge m_{n-r}\left(2\varepsilon\right),\max_{h\in\mathcal{H}_{n} \cap [0,1] }X_{r,n}\left(h\right)\le m_{n-r}(\varepsilon)\right]=0.\label{eq: continuity at level n}
\end{equation}
\end{lem}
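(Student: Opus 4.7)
The plan is to compare the continuous maximum with the maximum over the discrete set $\mathcal H_n\cap[0,1]$ by means of the maximal inequality Proposition \ref{prop: max inequality}, and then take a union bound over the $2^n$ discrete centers.

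First I would discretize the event. On the event in \eqref{eq: continuity at level n}, some $h^{*}\in[0,1]$ attains $X_{r,n}(h^{*})\ge m_{n-r}(2\varepsilon)$; picking $h\in\mathcal H_n$ nearest to $h^{*}$ gives $|h-h^{*}|\le 2^{-n-1}$, while the second hypothesis forces $X_{r,n}(h)\le m_{n-r}(\varepsilon)$. Hence the event is contained in
\[
\bigcup_{h\in\mathcal H_n\cap[0,1]}\Bigl\{\max_{|h'-h|\le 2^{-n-1}}X_{r,n}(h')\ge m_{n-r}(\varepsilon)+\varepsilon\log(n-r),\;X_{r,n}(h)\le m_{n-r}(\varepsilon)\Bigr\},
\]
using $m_{n-r}(2\varepsilon)-m_{n-r}(\varepsilon)=\varepsilon\log(n-r)$.

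Next I would apply Proposition \ref{prop: max inequality} to each summand with $k=n$, $x=m_{n-r}(\varepsilon)$ and $a=\varepsilon\log(n-r)$ (both admissibility conditions hold for $n$ large, since $x\le(\log 2)(n-r)$ and $a$ lies safely in $[2,2^{2n}-x]$). This bounds each summand by
\[
c\exp\!\left(-\frac{m_{n-r}(\varepsilon)^{2}}{2(n-r)\sigma^{2}}-c(\varepsilon\log(n-r))^{3/2}\right),
\]
and by \eqref{eq: mu squared} the quadratic term rewrites as $(n-r)\log 2-(3/2-2\varepsilon)\log(n-r)+o(1)$. Summing over the $\le 2^n+1$ centers therefore gives an upper bound of order
\[
2^{r}\,(n-r)^{3/2-2\varepsilon}\,\exp\!\bigl(-c(\varepsilon\log(n-r))^{3/2}\bigr),
\]
which tends to $0$: by the choice $r=\lfloor(\log\log n)^{2}\rfloor$ in \eqref{eq: def of r} we have $2^{r}=e^{O((\log\log n)^{2})}$, and together with the polynomial prefactor this is killed by the stretched exponential $e^{-c(\log n)^{3/2}}$.

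The argument is essentially bookkeeping, and the only delicate point is exactly that bookkeeping: the $2^n$ union-bound cost is offset by a Gaussian-type tail at height $\mu(\varepsilon)(n-r)$, leaving a residue $2^{r}$ from having cut off the first $r$ scales and a polynomial in $n-r$ from the subleading term $-\frac34\log(n-r)$. The reason these survive the tail is the stretched exponential $e^{-ca^{3/2}}$ supplied by the chaining argument of Proposition \ref{prop: max inequality}; the subpolynomial gap $a=\varepsilon\log(n-r)$ is more than enough to swallow both, even though $r$ is slightly superlogarithmic in $n$. No Gaussian approximation or ballot input is needed at this stage — the lemma is purely a continuity-type reduction.
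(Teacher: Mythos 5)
Your proof is correct and follows essentially the same route as the paper: cover $[0,1]$ by $2^n$ intervals of length $2^{-n}$ centered on $\mathcal H_n$, apply Proposition~\ref{prop: max inequality} with $k=n$, $x=m_{n-r}(\varepsilon)$, $a=\varepsilon\log(n-r)$, then expand the quadratic term via \eqref{eq: mu squared} and observe that the residual $2^r(n-r)^{3/2-2\varepsilon}$ is killed by the stretched exponential. One tiny imprecision: $x\le(\log 2)(n-r)$ is false when $\varepsilon>3/4$, but all you need is $x\le C(n-r)$ for some fixed $C$, which holds for $n$ large.
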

\begin{proof}
Using translation invariance and a union bound on $2^n$ intervals, the probability in \eqref{eq: continuity at level n} is at most
$$
  2^{n}\mathbb{P}\left[\max_{h: |h|\leq 2^{-n-1}}X_{r,n}\left(h\right)\ge m_{n-r}\left(2\varepsilon\right),X_{r,n}\left(0\right)\le m_{n-r}(\varepsilon)\right].\label{eq:cont union bound}
$$
Proposition \ref{prop: max inequality} can be applied with $k=n$, $x=m_{n-r}(\varepsilon)=(n-r)\mu(\varepsilon)$ and $a=m_{n-r}(2\varepsilon)-m_{n-r}(\varepsilon)=\varepsilon \log{(n-r)} <2^{2n}-x$.
This gives the upper bound
\begin{equation}\label{eq: after tail bound}
  c2^{n}\exp\left( -\left(n-r\right) \frac{\mu(\varepsilon)^2}{2\sigma^2}-c\varepsilon^{3/2}\left(\log\left(n-r\right)\right)^{3/2}\right).
\end{equation}
Using \eqref{eq: mu squared} and \eqref{eq: def of r}, we get that \eqref{eq: after tail bound} is at most
$$
  c2^{n} \left( 2^{r-n} \left(n-r\right)^{\frac{3}{2}-2\varepsilon}e^{-c\varepsilon^{3/2}\left(\log\left(n-r\right)\right)^{3/2}} \right)
  = o\left(1\right).
$$
\end{proof}

The second step is to show that for each $h\in \mathcal [0,1]$ the process $k\to X_{r,k}\left(h\right)$
satisfies a barrier condition with very high probability. This simply requires a union bound together with continuity estimates.
\begin{lem}
\label{lem: construction fo barrier}
For all $\varepsilon>0$,
\begin{equation}
\lim_{n\to\infty}\mathbb{P}\left[\exists h\in[0,1],k\in\left\{ \lfloor \log n\rfloor^2,\ldots,n\right\} \mbox{ s.t. }X_{r,k}\left(h\right)> (k-r)\mu(\varepsilon) +\left(\log n\right)^{2}\right]=0.\label{eq:barrier}
\end{equation}
\end{lem}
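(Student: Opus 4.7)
The plan is a straightforward union bound over the scales $k$ combined with a discretization of $[0,1]$ into intervals of length $2^{-k}$, to which the supremum tail bound of Corollary \ref{cor: tail of sup over interval} applies. Set $x_k = (k-r)\mu(\varepsilon) + (\log n)^2$. By translation invariance and a union bound over $h\in \mathcal{H}_k \cap [0,1]$ (there are at most $2^k$ such points, cf.~\eqref{eqn: H}),
\[
\mathbb{P}\left[\max_{h\in [0,1]} X_{r,k}(h) > x_k\right] \leq 2^k \, \mathbb{P}\left[\max_{h': |h'| \leq 2^{-k-1}} X_{r,k}(h') > x_k\right].
\]
For $n$ large, $k - r \geq \lfloor \log n\rfloor^2 - \lfloor(\log\log n)^2\rfloor \geq \tfrac{1}{2}(\log n)^2$, so $x_k \leq (\mu(\varepsilon)+1)(k-r) \leq C(k-r)$, and Corollary \ref{cor: tail of sup over interval} applies to give
\[
\mathbb{P}\left[\max_{h\in [0,1]} X_{r,k}(h) > x_k\right] \leq c \cdot 2^k \exp\left(-\frac{x_k^2}{2(k-r)\sigma^2}\right).
\]

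Next I would expand the exponent using \eqref{eq: mu squared}. Writing $x_k^2 = (k-r)^2\mu(\varepsilon)^2 + 2(k-r)\mu(\varepsilon)(\log n)^2 + (\log n)^4$ and dropping the last (positive) term for an upper bound on the probability,
\[
\frac{x_k^2}{2(k-r)\sigma^2} \geq (k-r)\log 2 - \left(\tfrac{3}{2} - 2\varepsilon\right)\frac{(k-r)\log(n-r)}{n-r} + \frac{\mu(\varepsilon)(\log n)^2}{\sigma^2}.
\]
Since $k - r \leq n - r$ the middle term is bounded by $(3/2)\log n$, and since $\mu(\varepsilon)/\sigma^2 \to 2$ the last term is at least $(2 - o(1))(\log n)^2$. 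Hence
\[
c \cdot 2^k \exp\left(-\frac{x_k^2}{2(k-r)\sigma^2}\right) \leq c\exp\!\left(r\log 2 + \tfrac{3}{2}\log n - (2 - o(1))(\log n)^2\right) \leq c\, e^{-(\log n)^2},
\]
using $r \log 2 = O((\log\log n)^2) = o((\log n)^2)$.

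Finally, a union bound over the at most $n$ values of $k \in \{\lfloor \log n\rfloor^2, \ldots, n\}$ gives
\[
\mathbb{P}\left[\exists h \in [0,1], k \in \{\lfloor \log n\rfloor^2, \ldots, n\} : X_{r,k}(h) > x_k\right] \leq c\, n\, e^{-(\log n)^2} \to 0.
\]
I do not foresee any serious obstacle: the only point to verify carefully is that the hypothesis $x \leq C(k-r)$ of Corollary \ref{cor: tail of sup over interval} is satisfied uniformly in $k$, which is where the choice of the lower cutoff $k \geq \lfloor\log n\rfloor^2$ (rather than $k\geq r$) matters, since it ensures the additive $(\log n)^2$ term is small compared to $k-r$.
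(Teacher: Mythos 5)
Your proposal is correct and follows essentially the same approach as the paper: a union bound over the scales $k$ and, within each scale, a union bound over $2^k$ intervals of length $2^{-k}$, followed by the maximal inequality of Corollary \ref{cor: tail of sup over interval} and the expansion \eqref{eq: mu squared}. You also correctly identify the key technical point---that the lower cutoff $k\geq\lfloor\log n\rfloor^2$ guarantees $x_k\leq C(k-r)$ so that the corollary applies uniformly.
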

\begin{proof}
By two successive union bounds, first over the scales $k=\lfloor \log n \rfloor^2,\ldots,n$, and then, for each of those scales, over $2^{k}$
intervals (together with translation invariance), the probability in \eqref{eq:barrier} is at most
$$
  \sum_{k= \lfloor \log n\rfloor^2}^{n}2^{k}
  \mathbb{P}\left[\max_{h:\left|h\right|\le2^{-k-1}}X_{r,k}\left(h\right)\ge (k-r)\mu(\varepsilon) +\left(\log n\right)^{2}\right].\label{eq:markovoverlevels}
$$
The maximal inequality \eqref{eq: tail of sup over interval} can be applied since the right-hand side of the inequality in
the probability is less than a constant times $(k-r)$.
Thus the sum is bounded above by,
\[
c\sum_{k= \lfloor \log n\rfloor^2}^{n}2^{k}\exp\left(-\frac{\left((k-r)\mu(\varepsilon)+\left(\log n\right)^{2}\right)^{2}}{2\left(k-r\right)\sigma^2}\right).
\]
Using \eqref{eq: mu squared} the argument in the exponential is at least
$$
 \left(k-r\right)\log 2 -\frac{3}{2}\log\left(n-r\right)+c\left(\log n\right)^{2} \ .
$$
We conclude that the probability in \eqref{eq:barrier} is at most
\[
c\sum_{k= \lfloor \log n\rfloor^2}^{n}2^{k} \left( 2^{r-k} n^{3/2}e^{-c\left(\log n\right)^{2}} \right)=c2^{r}n^{5/2}e^{-c\left(\log n\right)^{2}}=o\left(1\right).
\]
\end{proof}
Lemma \ref{lem: continuity at level n} and Lemma \ref{lem: construction fo barrier} show that $\max_{h\in [0,1]}X_{r,n}\left(h\right)$
exceeds $m_{n-r}\left(2\varepsilon\right)$ only if, for some $h\in\mathcal{H}_{n} \cap [0,1]$, $X_{r,n}\left(h\right)$
exceeds $m_{n-r}(\varepsilon)$ and the process $(X_{r,k}\left(h\right), \lfloor \log n \rfloor^2 \leq k\leq n)$
stays below a linear barrier. The number of $h\in\mathcal{H}_{n}$ that manage this feat is
\begin{equation}
\begin{array}{c}
Z^{+}=\sum_{h\in\mathcal{H}_{n}\cap[0,1]}\textbf{1}_{J^{+}\left(h\right)}, \mbox{ where }\\
J^{+}\left(h\right)=\left\{ X_{r,n}\left(h\right)\ge m_{n-r}(\varepsilon)\mbox{, }X_{r,k}\left(h\right)\le (k-r)\mu(\varepsilon) 
+\left(\log n\right)^{2}\forall k\ge\lfloor\log n\rfloor^2 \right\} .
\end{array}
\end{equation}
We show $\mathbb{P}\left[Z^{+}>0\right] \le c2^{r}\left(\log n\right)^{6}\left(n-r\right)^{-2\varepsilon}$, thereby proving Proposition \ref{prop: Upper bound} since the right-hand side is $o(1)$ by the definition \eqref{eq: def of r} of $r$.
Here we shall use the previous Gaussian approximation results and the ballot theorem.
\begin{prop}
\label{prop: Upper bound one point expectation}
For all $\varepsilon>0$,
\begin{equation}
\label{eq: upper bound one point expectation}
\mathbb{P}\left[Z^{+}>0\right]\le\mathbb{E}\left[Z^{+}\right] \le c2^{r}\left(\log n\right)^{6}\left(n-r\right)^{-2\varepsilon} \ .
\end{equation}
\end{prop}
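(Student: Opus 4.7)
The first inequality $\prob[Z^+>0] \leq \E[Z^+]$ is Markov's inequality. By translation invariance of the family $(U_p\, p^{-ih})_{p}$ in $h$, the law of each $X_{r,k}(h)$ coincides with that of $X_{r,k}(0)$, so $\prob[J^+(h)] = \prob[J^+(0)]$ for every $h\in \mathcal{H}_n \cap [0,1]$, giving $\E[Z^+] = 2^n\, \prob[J^+(0)]$. It therefore suffices to show that $\prob[J^+(0)] \leq c\, 2^{-(n-r)}\,(\log n)^6\,(n-r)^{-2\varepsilon}$. The plan is a first-moment exponential-tilting argument, followed by a Gaussian comparison and the ballot theorem.

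Choose the tilt $\lambda = \mu(\varepsilon)/\sigma^2$, which is bounded since $\mu(\varepsilon)\to \log 2$, and use the measure $\mathbb{Q}_\lambda$ from \eqref{eqn: RN1 and RN2} restricted to scales $r+1,\ldots,n$. Partition $J^+(0)$ by the unit strips
\[ B_q := \{X_{r,n}(0) \in [m_{n-r}(\varepsilon)+q,\, m_{n-r}(\varepsilon)+q+1)\},\qquad q\in\mathbb{N}.\]
Proposition \ref{prop: one point cgf} together with \eqref{eq: mu squared} yields
\[ \sum_{k=r+1}^{n}\psi_k^{(1)}(\lambda)\;-\;\lambda\, m_{n-r}(\varepsilon) \;=\; -(n-r)\log 2 + \bigl(\tfrac{3}{2}-2\varepsilon\bigr)\log(n-r) + O(1), \]
and hence the Radon--Nikodym identity, using $X_{r,n}(0) \geq m_{n-r}(\varepsilon)+q$ on $B_q$, gives
\[ \prob[J^+(0)\cap B_q] \;\leq\; c\, 2^{-(n-r)}\,(n-r)^{3/2-2\varepsilon}\, e^{-\lambda q}\, \mathbb{Q}_\lambda[J^+(0)\cap B_q]. \]

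To bound $\mathbb{Q}_\lambda[J^+(0)\cap B_q]$, note that this event has the form $\{X_{r,k}(0)\in A_k\ \forall\,r<k\leq n\}$ where each $A_k$ is an interval. Proposition \ref{prop: one point gaussian comparison} (with $m=r$) replaces the probability by the corresponding Gaussian-walk probability, with additive error $O(e^{-e^{cr}})$; for $r=\lfloor(\log\log n)^2\rfloor$ this error is super-polylogarithmically small and negligible. In the Gaussian walk, let $\tilde S_j$ denote the centered sum (mean $0$, variance $\sigma^2$ per step); the event demands $\tilde S_{k-r}\leq (\log n)^2$ for all $k_0 \leq k\leq n$ with $k_0 := \lfloor\log n\rfloor^2$, and $\tilde S_{n-r}\in[q,q+1)$. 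Since the walk is unconstrained for $k < k_0$, I condition on $\tilde S_{k_0-r}=x$ and apply Lemma \ref{lem:ballot theorem} (barrier and endpoint shifted by $x$) over the remaining $n-k_0$ steps to get, for $x\leq (\log n)^2$,
\[ \prob^{\mathrm{Gauss}}\!\bigl[\,J^+(0)\cap B_q \,\big|\, \tilde S_{k_0-r}=x\,\bigr] \;\leq\; c\,\frac{\bigl(1+(\log n)^2 - x\bigr)\bigl(1+(\log n)^2 - q\bigr)}{(n-k_0)^{3/2}}, \]
and zero otherwise. Integrating against the Gaussian law of $\tilde S_{k_0-r}$ (of standard deviation $O(\log n)$) inserts a factor $O((\log n)^2)$, and summing over $q\in [0,(\log n)^2]$ with the geometric weight $e^{-\lambda q}$ (summable since $\lambda$ is bounded below by a positive constant) contributes another $O((\log n)^2)$. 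Thus $\sum_q e^{-\lambda q}\,\mathbb{Q}_\lambda[J^+(0)\cap B_q] \leq c\,(\log n)^4/(n-r)^{3/2}$, which, combined with the tilting bound and multiplied by $2^n$, yields the proposition (absorbing $(\log n)^4$ into the target $(\log n)^6$).

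The main technical point is the interface between the Gaussian comparison and the ballot theorem across the scale $k_0$: since the barrier only activates at $k_0$, one must apply the Markov property at that scale and verify that the Gaussian integration over $\tilde S_{k_0-r}$ produces only polylogarithmic overhead rather than a power of $n$. The choice $r=\lfloor(\log\log n)^2\rfloor$ is delicately calibrated so that the additive Gaussian error $O(e^{-e^{cr}})$ is negligible while simultaneously $2^r$ is of sub-polynomial order, so that the factor $2^r$ in the conclusion is absorbed into $(n-r)^{-2\varepsilon}$ when deducing Theorem \ref{thm: main}.
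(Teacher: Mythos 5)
Your proof is correct and follows essentially the same route as the paper: translation invariance to reduce to $\prob[J^+(0)]$, exponential tilting with $\lambda=\mu(\varepsilon)/\sigma^2$, the Gaussian comparison of Proposition~\ref{prop: one point gaussian comparison} with $m=r$, the Markov property at scale $k_0=\lfloor\log n\rfloor^2$ since the barrier is only active from there on, and the ballot theorem. The only divergence is bookkeeping around the endpoint: you partition $J^+(0)$ into unit strips $B_q$ before tilting and sum with the geometric weight $e^{-\lambda q}$, whereas the paper bounds $X_{r,n}(0)\geq m_{n-r}(\varepsilon)$ once, passes to the Gaussian event, and handles the endpoint range $[-q,(\log n)^2-q]$ by a union bound over unit intervals inside the ballot-theorem step; your accounting in fact produces $(\log n)^4$ in place of the paper's $(\log n)^6$, which is harmlessly sharper. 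One minor point you gloss over: for $q$ within $1$ of $(\log n)^2$ the hypothesis $b\leq a-\delta$ of Lemma~\ref{lem:ballot theorem} fails, so those boundary $q$ need either a trivial local-CLT bound $O((n-k_0)^{-1/2})$ or a slightly enlarged barrier; this is easily patched and does not affect the conclusion.
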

\begin{proof}
By translation invariance and linearity of expectation, we have $\mathbb{E}\left[Z^{+}\right]=2^{n}\mathbb{P}\left[J^{+}\left(0\right)\right]$.
We show that
\begin{equation}
\label{eq: upper bound one point to show}
\mathbb{P}\left[J^{+}(0)\right]\le c2^{r-n}\left(\log n\right)^{6}\left(n-r\right)^{-2\varepsilon},
\end{equation}
thus yielding (\ref{eq: upper bound one point expectation}). 
To prove \eqref{eq: upper bound one point to show}, let
$\lambda=\mu(\varepsilon)/\sigma^2$, and recall the definition of $\mathbb{Q}_\lambda$ from \eqref{eqn: RN1 and RN2}.
We have that
\begin{equation}
\label{eq: change of measure bound}
  \mathbb{P}\left[J^{+}(0)\right] \le \mathbb{Q}_{\lambda}\left[J^{+}(0)\right]e^{\sum_{k=r+1}^{n}\psi_{k}^{\left(1\right)}\left(\lambda\right)-\lambda (n-r)\mu(\varepsilon)},
\end{equation}
because $X_{r,n}\left(0\right)\ge (n-r)\mu\left(\varepsilon\right)$ on the event $J^{+}\left(0\right)$. 
Using the estimates \eqref{eqn: one point cgf} and \eqref{eqn: sigmak estimate} we get that
\begin{equation}
\label{eqn: 1 point rate}
\begin{aligned}
\sum_{k=r+1}^{n}\psi_{k}^{\left(1\right)}\left(\lambda\right)-\lambda (n-r)\mu(\varepsilon)
& = -(n-r)\frac{\mu(\varepsilon)^2}{2\sigma^2} + O\left(e^{-c\sqrt{2^r}}\right).\\
\end{aligned}
\end{equation}
By \eqref{eq: mu squared}, the exponential in \eqref{eq: change of measure bound} is thus at most $c2^{r-n} (n-r)^{\frac{3}{2} - 2\varepsilon}$.
It remains to show
\begin{equation}
\mathbb{Q}_{\lambda}\left[J^{+}(0)\right]\le c\left(\log n\right)^{6}\left(n-r\right)^{-3/2}.\label{eq: suffices to show}
\end{equation}
The event $J^{+}(0)$ takes the form in Proposition \ref{prop: one point gaussian comparison} with $m=r$.
Thus $\mathbb{Q}_{\lambda}\left[J^{+}(0)\right]$ is at most
$\eta^{\times(n-r)}_{\mu(\varepsilon),\sigma^2}(E_1)+O(e^{-e^{cr}})$, where
$$
 E_1 = \Big\{ \vec{y}\in\mathbb{R}^{n-r}:\sum_{l=1}^{k}(y_{l}-\mu(\varepsilon))\le  \left(\log n\right)^{2}\ \forall k\geq \lfloor\log n\rfloor^2-r,\sum_{l=1}^{n-r}(y_{l}-\mu(\varepsilon)) \ge 0 \Big\}.
$$
After recentering, the probability of $E_1$ is simply
\begin{equation}\label{eq: after rece}
  \eta^{\times(n-r)}_{0,\sigma^2}\Big\{ \vec{y}\in\mathbb{R}^{n-r}:\sum_{l=1}^{k}y_{l}\le\left(\log n\right)^{2} \forall k\geq  \lfloor\log n\rfloor^2-r,\sum_{l=1}^{n-r}y_{l}\ge 0 \Big\} \ .
\end{equation}
By conditioning on $\sum_{l=1}^{  \lfloor\log n\rfloor^2 -r }y_{l} =q$, we may bound the above by the supremum over $q\in [-(\log n)^2,(\log n)^2]$ of 
$\eta^{\times(n-(\log n)^2)}_{0,\sigma^2}(E_2) + O( ce^{-(\log n)^2} )$, where
\begin{equation}\label{eq: after sup}
 E_2 = \big\{ \vec{y}\in\mathbb{R}^{n-(\log n)^2}:\sum_{l=1}^{k}y_{l}\le\left(\log n\right)^{2}-q \ \forall k\ge0,\sum_{l=1}^{n- \lfloor\log n\rfloor^2}y_{l}\ge -q \big\}\ .
\end{equation}
This is because of the standard Gaussian bound
$$
  \eta^{\times((\log n)^2 -r )}_{0,\sigma^2}\Big\{ \vec{y}\in\mathbb{R}^{(\log n)^2 -r } : \sum_{l=1}^{ (\log n)^2 -r }y_{l} \le -(\log n)^2 \Big\} \le c\exp\left(-c\frac{(\log n)^4}{(\log n)^2-r} \right).
$$
For a given $q$, the probability of the event in \eqref{eq: after sup} may be bounded above by a union bound over a partition of $[-q, (\log n)^2 -q]$ into intervals of length $1$, and
the ballot theorem (Lemma \ref{lem:ballot theorem}). This gives an upper bound for \eqref{eq: after rece} of
\[
\sup_{ -(\log n)^2 \le q \le (\log n)^2 } (\log n)^2 \times c\frac{\left( 1 + \left(\log n\right)^{2} - q \right) \left(2\left(\log n\right)^{2}\right)}{\left(n-r\right)^{3/2}}
\le
c(\log n)^6\left(n-r\right)^{-3/2}.
\]
This proves \eqref{eq: suffices to show}, and thus also \eqref{eq: upper bound one point to show}
and \eqref{eq: upper bound one point expectation}.
\end{proof}

\subsection{Proof of the lower bound}

In this section, we prove the lower bound part of \eqref{eq: main in terms of X 2}.
The proof is reduced to a lower bound on $\max_{h\in[0,1]}X_{r,n}\left(h\right)$ by Lemma \ref{lem: taking care of up to r}.
We show:
\begin{prop}
\label{prop: lower bound}
For all $\varepsilon>0$,
\begin{equation}\label{eq: lower bound}
  \lim_{n\to\infty}\mathbb{P}\left[\max_{h\in[0,1]}X_{r,n}\left(h\right)\geq m_{n-r}\left(-\varepsilon\right)\right]=1\ .
\end{equation}
\end{prop}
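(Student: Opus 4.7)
The plan is to prove Proposition 4.4 via a second moment (Paley--Zygmund) argument, mirroring the structure of the upper bound proof in Section 4.1. First I would introduce, in parallel with $J^{+}(h)$, the event
\begin{equation*}
J^{-}(h)=\Big\{X_{r,n}(h)\in [m_{n-r}(-\varepsilon),m_{n-r}(-\varepsilon)+1],\ X_{r,k}(h)\le (k-r)\mu(-\varepsilon)+(\log n)^{2}\ \forall k\geq \lfloor\log n\rfloor^{2}\Big\}
\end{equation*}
and set $Z^{-}=\sum_{h\in\mathcal{H}_{n}\cap[0,1]}\mathbf{1}_{J^{-}(h)}$. The upper barrier in $J^{-}$ is not needed for $Z^{-}>0$ to imply the desired event; its role is purely to control the second moment by suppressing the contribution of pairs whose common ancestor path overshoots the linear profile. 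Since $\{Z^{-}>0\}\subseteq\{\max_{h\in[0,1]}X_{r,n}(h)\ge m_{n-r}(-\varepsilon)\}$, by Paley--Zygmund it suffices to show $\mathbb{E}[Z^{-}]\to\infty$ and $\mathbb{E}[(Z^{-})^{2}]\le (1+o(1))\mathbb{E}[Z^{-}]^{2}$.

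For the first moment, I would set $\lambda=\mu(-\varepsilon)/\sigma^{2}$ and tilt by $\mathbb{Q}_{\lambda}$. Mirroring \eqref{eq: change of measure bound}--\eqref{eqn: 1 point rate}, the endpoint lies in the window of length $1$ so
$$\mathbb{P}[J^{-}(0)]=(1+o(1))\,e^{-(n-r)\mu(-\varepsilon)^{2}/(2\sigma^{2})}\cdot\mathbb{Q}_{\lambda}[J^{-}(0)],$$
and \eqref{eq: mu squared} gives $e^{-(n-r)\mu(-\varepsilon)^{2}/(2\sigma^{2})}\asymp 2^{-(n-r)}(n-r)^{3/2+2\varepsilon}$. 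Under $\mathbb{Q}_{\lambda}$, Proposition \ref{prop: one point gaussian comparison} lets me replace the partial sums by a Gaussian walk with i.i.d.\ increments of mean $\mu(-\varepsilon)$ and variance $\sigma^{2}$. After recentering by the drift, the event $J^{-}(0)$ becomes a random walk bridge ending in $[-\mu(-\varepsilon)\cdot 0,\,1]$ and staying below $(\log n)^{2}$, so the ballot lower bound \eqref{eq:ballot theorem lb} yields $\mathbb{Q}_{\lambda}[J^{-}(0)]\ge c(n-r)^{-3/2}$. Translation invariance then gives $\mathbb{E}[Z^{-}]=2^{n}\mathbb{P}[J^{-}(0)]\ge c\,2^{r}(n-r)^{2\varepsilon}\to\infty$.

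For the second moment, I would decompose
$\mathbb{E}[(Z^{-})^{2}]=\sum_{j=0}^{n}\sum_{h_{1}\wedge h_{2}=j}\mathbb{P}[J^{-}(h_{1})\cap J^{-}(h_{2})]$,
noting that the inner sum has $O(2^{n}\cdot 2^{n-j})$ terms. For a fixed pair with branching point $j$, I tilt by $\vec{\lambda}=(\lambda,\lambda)$. A direct computation using Proposition \ref{prop: two point ld} together with the structure of $\vec\Sigma_{k}$ (dominated by $\rho_{k}\approx\sigma^{2}$ for $k\le j$ and $\rho_{k}\approx 0$ for $k>j$, by Lemma \ref{lem: primes}) shows that the exponent produced by the tilt equals $-(n-j)\mu(-\varepsilon)^{2}/\sigma^{2}+o(1)$, giving
$$\mathbb{P}[J^{-}(h_{1})\cap J^{-}(h_{2})]\le(1+o(1))\,e^{-(n-j)\mu(-\varepsilon)^{2}/\sigma^{2}}\cdot\mathbb{Q}_{\vec\lambda}[J^{-}(h_{1})\cap J^{-}(h_{2})].$$
Under $\mathbb{Q}_{\vec\lambda}$, Propositions \ref{prop: two point gaussian comparison before branch} and \ref{prop: two point gaussian comparison} (with a buffer $\Delta=\Delta(n)\to\infty$ slowly) replace the process with an \emph{exact} branching Gaussian walk: a common walk with drift $2\mu(-\varepsilon)$ up to scale $j$, then two independent walks with drift $\mu(-\varepsilon)$. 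The joint barrier-plus-endpoint probability then factors, up to Gaussian error, as (common ballot probability)$\times$(two independent split ballot probabilities), which by Lemma \ref{lem:ballot theorem} I would bound by $c(\log n)^{O(1)}(j-r)^{-3/2}\cdot\big[(n-j)^{-3/2}\big]^{2}\cdot \text{(recentering factor)}$; the extra drift $\mu$ in the common part contributes an additional $2^{-(j-r)}$ from the Gaussian density at the forced endpoint $\mu(j-r)$. Plugging back, summing over $j$, and comparing with the explicit form of $(\mathbb{E}[Z^{-}])^{2}$ obtained in the first-moment step should yield $\mathbb{E}[(Z^{-})^{2}]\le C\,\mathbb{E}[Z^{-}]^{2}$, which suffices for Paley--Zygmund to force $\mathbb{P}[Z^{-}>0]$ to remain bounded below. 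A standard boosting (considering disjoint sub-intervals of $[0,1]$ and using independence at the coarsest scale $r$) then upgrades this to probability $1-o(1)$.

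The main obstacle is the second moment step. Several delicate issues must be reconciled simultaneously: (i) the Gaussian approximations in Section \ref{sect: gaussian} only work beyond the decoupling buffer $\Delta$, so the scales $j-\Delta<k<j+\Delta$ around the branching point need separate handling; (ii) the tilted common walk has drift $2\mu(-\varepsilon)$ against a barrier of slope $\mu(-\varepsilon)$, so one must carefully track the barrier against a negatively-drifting recentered walk and apply the ballot estimates with care; (iii) boundary $j$'s (both $j$ close to $r$ and $j$ close to $n$) require separate treatment, with very small $j$ being the most dangerous regime that motivates the extra upper barrier in $J^{-}$. Once these are controlled, the matching powers of $2^{n}\cdot 2^{n-j}\cdot 2^{-(j-r)}$ across the double sum collapse to produce the desired pairing with $\mathbb{E}[Z^{-}]^{2}$.
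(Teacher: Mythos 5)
Your overall strategy---Paley--Zygmund with a barrier, exponential tilting by $\mathbb{Q}_{\lambda}$ and $\mathbb{Q}_{\vec\lambda}$, Gaussian comparison before and after the branching point, and the ballot theorem---is the same as the paper's, and you correctly identify the main difficulties (the $\Delta$-buffer around the branching point, the doubling of the tilt in the coupled phase, the boundary regimes of $j$). But two of your steps would fail as written.

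The fixed endpoint window of width $1$ makes the step
$\mathbb{P}[J^{-}(0)]=(1+o(1))\,e^{-(n-r)\mu(-\varepsilon)^{2}/(2\sigma^{2})}\,\mathbb{Q}_{\lambda}[J^{-}(0)]$
incorrect: on $J^{-}(0)$ the Radon--Nikodym factor $e^{-\lambda X_{r,n}(0)}$ varies by a multiplicative $e^{\lambda}$, and $\lambda=\mu(-\varepsilon)/\sigma^{2}\to 2$, so this is a fixed factor of roughly $e^{2}$, not $1+o(1)$. Consequently $\mathbb{E}[(Z^{-})^{2}]\le(1+o(1))\,\mathbb{E}[Z^{-}]^{2}$ cannot be achieved with a fixed-width window; at best one gets $\mathbb{E}[(Z^{-})^{2}]\le C\,\mathbb{E}[Z^{-}]^{2}$ for a constant $C>1$, and Paley--Zygmund then yields only a uniformly positive lower bound. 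The paper resolves this by putting the endpoint in a window of width $\delta$, proving $\mathbb{P}[Z^{-}\ge 1]\ge(1+o(1))e^{-2c\delta}$, and sending $\delta\to 0$ \emph{after} $n\to\infty$. Your proposed substitute, ``boosting by disjoint subintervals and independence at the coarsest scale,'' is not a valid step: the restrictions of $X_{r,n}$ to disjoint subintervals of $[0,1]$ are not independent, only approximately decorrelated, and since $X_{r,n}$ is not Gaussian this decorrelation does not give anything close to independence of the events $\{\max>m\}$. You have no argument upgrading the constant lower bound to $1-o(1)$, and this is the heart of the proposition.

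A secondary issue is the barrier. Copying the upper-bound barrier (slack $(\log n)^{2}$, starting at $\lfloor\log n\rfloor^{2}$) into $J^{-}$ spoils the cancellation needed in Paley--Zygmund: the ballot upper bound \eqref{eq:ballot theorem ub} with barrier $(\log n)^{2}$ picks up a $(\log n)^{4}$ factor, while the ballot lower bound \eqref{eq:ballot theorem lb} as stated gives only $cn^{-3/2}$ for barrier $1$. Unless you prove a matching lower bound $\asymp(\log n)^{4}/n^{3/2}$ for a growing barrier, the $(\log n)^{4}$ mismatch enters the ratio $\mathbb{E}[Z^{-}]^{2}/\mathbb{E}[(Z^{-})^{2}]$ and drives it to zero. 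The paper instead imposes barrier slack $+1$ for \emph{all} $k>r$; this makes the ballot input tight on both sides and causes the crucial quantity $\eta_{0,\sigma^{2}}^{\times(n-r)}[A]$ to cancel identically between Lemma \ref{lem: first moment} and Lemma \ref{lem: second moment}.
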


As for the upper bound, we consider a modified number of exceedances with a barrier.
For $\delta>0$, let
$$
J^{-}(h)=\left\{ X_{r,n}(h) \in [m_{n-r}(-\varepsilon), m_{n-r}(-\varepsilon)+\delta], \ X_{r,k}(h)\leq (k-r)\mu(-\varepsilon) +1 \ \forall k=r+1,\dots, n\right\}\ .
$$
We omit the dependence on the parameter $\delta$ in the notation for simplicity. Consider the random variable,
$$
Z^{-}=\sum_{h\in\mathcal H_n \cap [0,1) } \textbf{1}_{J^{-}(h)}\ .
$$
Clearly, $\max_{h\in[0,1]}X_{r,n}\left(h\right)\geq m_{n-r}\left(-\varepsilon\right)$ if and only if $Z^{-}\geq 1$.
The Paley--Zygmund inequality implies that
$$
\prob(Z^{-}\geq 1)\geq \frac{\E[Z^{-}]^2}{\E[(Z^{-})^2]}.
$$
We will prove the following estimates for the first and second moments of $Z^{-}$.
Let
\begin{equation}
\label{eqn: A}
A=\Big\{\vec{y}\in \R^{n-r}: \sum_{k=1}^{n-r} y_k  \in [0,\delta], \ \sum_{k=1}^{l-r} y_k \leq 1\ \forall l=r+1,\dots, n\Big\}.
\end{equation}
\begin{lem}
For $\delta>0$,
\label{lem: first moment}
\begin{equation}
\E[Z^{-}]\geq \left(1+o\left(1\right)\right) e^{-c\delta} \ 2^r (n-r)^{\frac{3}{2}+2\varepsilon}\  \eta_{0,\sigma^2}^{\times(n-r)}[A]\ .
\end{equation} 
\end{lem}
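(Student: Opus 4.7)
The plan is to mimic the upper bound computation of Proposition \ref{prop: Upper bound one point expectation} but in reverse: tilt by the same exponential change of measure, then apply the Gaussian comparison and recenter. The key point is that since the window $[m_{n-r}(-\varepsilon),m_{n-r}(-\varepsilon)+\delta]$ has bounded width $\delta$, the Radon--Nikodym derivative is nearly constant on $J^{-}(0)$ and can be bounded below by a deterministic quantity.

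First, by translation invariance of the law of $(X_{r,k}(h))_{k}$ and linearity of expectation,
\[
\E[Z^{-}]=2^{n}\prob[J^{-}(0)].
\]
Let $\lambda=\mu(-\varepsilon)/\sigma^{2}$ and recall $\mathbb{Q}_{\lambda}$ from \eqref{eqn: RN1 and RN2}. Since $J^{-}(0)$ depends only on $Y_{r+1}(0),\dots,Y_{n}(0)$, inverting the change of measure yields
\[
\prob[J^{-}(0)]=\mathbb{E}_{\mathbb{Q}_{\lambda}}\!\left[\mathbf{1}_{J^{-}(0)}\,\exp\!\Big(-\lambda X_{r,n}(0)+\sum_{k=r+1}^{n}\psi_{k}^{(1)}(\lambda)\Big)\right].
\]
On $J^{-}(0)$ we have $X_{r,n}(0)\leq m_{n-r}(-\varepsilon)+\delta=(n-r)\mu(-\varepsilon)+\delta$, so the exponential is at least
\[
\exp\!\Big(-\lambda\delta-(n-r)\lambda\mu(-\varepsilon)+\sum_{k=r+1}^{n}\psi_{k}^{(1)}(\lambda)\Big).
\]
By \eqref{eqn: one point cgf} applied to each scale $k>r$ (and absorbing the $r$ small scales into a constant, as in the proof of Corollary \ref{cor: ld one point}) together with \eqref{eqn: sigmak estimate}, this exponent equals
\[
-\lambda\delta-(n-r)\frac{\mu(-\varepsilon)^{2}}{2\sigma^{2}}+O\!\left(e^{-c\sqrt{2^{r}}}\right).
\]
Invoking \eqref{eq: mu squared} with $-\varepsilon$ in place of $\varepsilon$, and recalling $\lambda$ is bounded, this produces the factor
\[
(1+o(1))\,e^{-c\delta}\,2^{-(n-r)}(n-r)^{\frac{3}{2}+2\varepsilon}.
\]

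It remains to bound $\mathbb{Q}_{\lambda}[J^{-}(0)]$ from below by $\eta_{0,\sigma^{2}}^{\times(n-r)}[A]$ up to lower order corrections. The event $J^{-}(0)$ is an intersection of convex conditions on the increments $X_{r,k}(0)$, so Proposition \ref{prop: one point gaussian comparison} (with $m=r$ and drift $\mu=\mu(-\varepsilon)$) gives
\[
\mathbb{Q}_{\lambda}[J^{-}(0)]=\eta_{\mu(-\varepsilon),\sigma^{2}}^{\times(n-r)}[\tilde A]+O\!\left(e^{-e^{cr}}\right),
\]
where $\tilde A$ is the image of the barrier-and-window event in the space of coordinates. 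Shifting each coordinate by $\mu(-\varepsilon)$, the drift is removed and the barrier $(k-r)\mu(-\varepsilon)+1$ becomes $1$, while the endpoint window $[m_{n-r}(-\varepsilon),m_{n-r}(-\varepsilon)+\delta]$ becomes $[0,\delta]$. This is exactly the set $A$ from \eqref{eqn: A}, so
\[
\mathbb{Q}_{\lambda}[J^{-}(0)]=\eta_{0,\sigma^{2}}^{\times(n-r)}[A]+O\!\left(e^{-e^{cr}}\right).
\]
The additive error is negligible because the Gaussian ballot theorem (Lemma \ref{lem:ballot theorem}) ensures $\eta_{0,\sigma^{2}}^{\times(n-r)}[A]\geq c(n-r)^{-3/2}$, which for $r=\lfloor(\log\log n)^{2}\rfloor$ dominates $e^{-e^{cr}}$. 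Combining everything yields the claimed bound.

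The only real subtlety is bookkeeping: confirming that the exponential rate produced by the tilt, together with the identity \eqref{eq: mu squared}, yields precisely the factor $2^{r}(n-r)^{\frac{3}{2}+2\varepsilon}$ after multiplying by the $2^{n}$ from translation invariance, and that the Gaussian error from Proposition \ref{prop: one point gaussian comparison} is truly lower order than the ballot probability. No other novel difficulty appears.
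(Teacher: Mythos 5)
Your proof is correct and follows essentially the same route as the paper's: translation invariance, the change of measure $\mathbb{Q}_{\lambda}$ with $\lambda = \mu(-\varepsilon)/\sigma^2$, a lower bound on the Radon--Nikodym density via the upper endpoint constraint $X_{r,n}(0) \le (n-r)\mu(-\varepsilon)+\delta$, evaluation of the exponent via \eqref{eqn: one point cgf}, \eqref{eqn: sigmak estimate}, and \eqref{eq: mu squared}, the Gaussian comparison of Proposition \ref{prop: one point gaussian comparison} followed by recentering, and finally \eqref{eq:ballot theorem lb} to see that $\eta_{0,\sigma^2}^{\times(n-r)}[A]$ dominates the additive error $O(e^{-e^{cr}})$. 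The only cosmetic difference is that you write out the change-of-measure inversion a bit more explicitly, but the substance is identical.
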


\begin{lem}
\label{lem: second moment}
For $\delta>0$,
\begin{equation}
\label{eq:second moment bound}
\mathbb{E}\left[(Z^{-})^{2}\right]\le (1+o(1))\left(2^{r}\left(n-r\right)^{\frac{3}{2}+2\varepsilon} \eta_{0,\sigma^2}^{\times\left(n-r\right)}\left[A\right]\right)^{2}\ .
\end{equation}
\end{lem}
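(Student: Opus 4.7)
The plan is to expand
$\mathbb{E}[(Z^-)^2] = \mathbb{E}[Z^-] + \sum_{h\ne h'}\mathbb{P}[J^-(h)\cap J^-(h')]$
(with the sum over $h,h'\in\mathcal{H}_n\cap[0,1)$) and to bound the off-diagonal term by splitting according to the branching level $j=h\wedge h'$. The diagonal contribution $\mathbb{E}[Z^-]$ is of order $2^r(n-r)^{2\varepsilon}\delta$ by Lemma \ref{lem: first moment}, which is negligible compared to the claimed target of order $2^{2r}(n-r)^{4\varepsilon}\delta^2$. By translation invariance, for each $0\le j<n$ there are approximately $2^n\cdot 2^{n-j}$ ordered pairs with $h\wedge h'=j$, and the pair probability depends only on $j$.

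For the pair probability at level $j$ I would apply the change of measure $\mathbb{Q}_{\vec{\lambda}}$ with $\vec{\lambda}=(\lambda,\lambda)$ and $\lambda=\mu(-\varepsilon)/\sigma^2$. Proposition \ref{prop: two point ld} and Lemma \ref{lem: primes} give that $\vec{\Sigma}_k$ is approximately $\sigma^2$ times the all-ones matrix for $k\le j$ and close to $\sigma^2\mathbbm{1}$ for $k>j$, from which a direct computation yields $\sum_{k=r+1}^n \psi^{(2)}_k(\vec{\lambda})-2\lambda m_{n-r}(-\varepsilon) = -(n-j)\mu(-\varepsilon)^2/\sigma^2+o(1)$. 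Since $\vec{\lambda}\cdot\vec{X}_{r,n}\ge 2\lambda m_{n-r}(-\varepsilon)$ on $J^-(h)\cap J^-(h')$, this produces $\mathbb{P}[J^-(h)\cap J^-(h')]\le (1+o(1))\,e^{-(n-j)\mu(-\varepsilon)^2/\sigma^2}\,\mathbb{Q}_{\vec{\lambda}}[J^-(h)\cap J^-(h')]$. Under $\mathbb{Q}_{\vec{\lambda}}$, Propositions \ref{prop: two point gaussian comparison before branch} and \ref{prop: two point gaussian comparison} reduce the process, up to a $(1+o(1))$ factor for a large fixed $\Delta$, to a single Gaussian walk with drift $2\mu(-\varepsilon)$ on scales $r<k\le j-\Delta$, followed by two independent Gaussian walks with drift $\mu(-\varepsilon)$ on scales $j+\Delta<k\le n$. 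I would then condition on the common value $V=X_{r,j}(h)=X_{r,j}(h')$, write $V=(j-r)\mu(-\varepsilon)+z$, and apply the ballot theorem (Lemma \ref{lem:ballot theorem}) after subtracting the drift matching the barrier slope $\mu(-\varepsilon)$: a Girsanov-type correction $e^{\lambda z-(j-r)\mu(-\varepsilon)^2/(2\sigma^2)}$ appears for the common part (whose excess drift is exactly $\mu(-\varepsilon)$), while each divergent part contributes at most $c(2-z)\delta/(n-j)^{3/2}$. Integrating over $z\le 1$ gives $\mathbb{Q}_{\vec{\lambda}}[J^-(h)\cap J^-(h')]\le c\,e^{-(j-r)\mu(-\varepsilon)^2/(2\sigma^2)}\,\delta^2/((j-r)^{3/2}(n-j)^3)$.

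Summing over $j$ then concludes the proof. For $j\le r-\Delta$ the restricted paths are essentially independent (Proposition \ref{prop: two point gaussian comparison} applies on the full range $k>r$), so $\mathbb{P}[J^-(h)\cap J^-(h')]\le(1+o(1))(\mathbb{P}[J^-(h)])^2$; since about $2^{2n}$ pairs fall in this regime and the single-point estimate gives $\mathbb{P}[J^-(h)]\le(1+o(1))\,2^{-(n-r)}(n-r)^{3/2+2\varepsilon}\eta_{0,\sigma^2}^{\times(n-r)}[A]$, the total contribution is $(1+o(1))(2^r(n-r)^{3/2+2\varepsilon}\eta_{0,\sigma^2}^{\times(n-r)}[A])^2$, matching the claimed right-hand side. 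For $j>r$, combining the estimates above with the counting $\sim 2^{2n-j}$ yields a contribution per level bounded by $c\cdot 2^{-r}(2^r(n-r)^{3/2+2\varepsilon}\eta_{0,\sigma^2}^{\times(n-r)}[A])^2/a^{3/2}$ with $a=j-r\ge 1$, whose sum over $a$ is $O(2^{-r})$ times the target, hence $o(1)$ times the target since $r=\lfloor(\log\log n)^2\rfloor\to\infty$. The main obstacle is the careful reconciliation of exponential factors: the $e^{-(n-j)\mu(-\varepsilon)^2/\sigma^2}$ from the change of measure combines with $e^{-(j-r)\mu(-\varepsilon)^2/(2\sigma^2)}$ from the ballot/Girsanov bound to yield exactly $e^{-((n-r)+(n-j))\mu(-\varepsilon)^2/(2\sigma^2)}$, and one must check that when paired with the counting $2^{2n-j}$ and the expansion $\mu(-\varepsilon)^2/(2\sigma^2)\approx\log 2-(3/2+2\varepsilon)\log(n-r)/(n-r)$, the resulting geometric factor $2^a$ in the large-$j$ regime is dominated by the $a^{-3/2}$ ballot contribution from the common part.
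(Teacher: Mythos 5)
Your strategy is essentially the paper's: decompose the second moment by the branching level $j=h\wedge h'$, show the $j\le r-\Delta$ contribution matches the target by approximate independence, and control the $j>r$ contribution using the barrier together with a tilted measure. Your choice of a single tilt $\vec{\lambda}=(\lambda,\lambda)$ with $\lambda=\mu(-\varepsilon)/\sigma^2$ followed by a Girsanov correction on the common part is equivalent in content to the paper's use of two different tilts (by $\mu(-\varepsilon)/(2\sigma^2)$ on the coupled scales in Lemma \ref{lem:Coupled Bound} and by $\mu(-\varepsilon)/\sigma^2$ on the decoupled scales in Lemma \ref{lem: Decoupled bound}), and your bookkeeping of the exponential factors is the same.

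There are, however, two genuine gaps. First, $\Delta$ cannot be a ``large fixed'' constant: Propositions \ref{prop: two point gaussian comparison} and \ref{prop: two point gaussian comparison before branch} produce a multiplicative error $1+O(e^{-c\Delta})$, so to obtain the $1+o(1)$ prefactor asserted in \eqref{eq:second moment bound} (which the Paley--Zygmund step ultimately relies on) you need $\Delta\to\infty$ with $n$, and small enough that the decoupled range $k>j+\Delta$ is nontrivial; the paper takes $\Delta=r/100$. Second, your single per-level bound $c\,\delta^2/\big((j-r)^{3/2}(n-j)^3\big)$ is only justified when $r+\Delta<j<n-\Delta$, since the Gaussian comparison before the branch (Proposition \ref{prop: two point gaussian comparison before branch}) needs a buffer of $\Delta$ scales below $j$ and the decoupling (Proposition \ref{prop: two point gaussian comparison}) needs $\Delta$ scales above $j$. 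As written, the bound degenerates as $j\to n$ (the factor $(n-j)^{-3}$ blows up), and the ballot estimate on the coupled part is not available for $j<r+\Delta$. The paper treats these boundary ranges separately: the buffer $r-\Delta<j\le r+\Delta$ is handled in Lemma \ref{lem: buffer} by dropping the barrier condition on the coupled scales, and $j\ge n-\Delta$ is handled in term $(IV)$ by discarding one of the two events and using the one-point estimate \eqref{eqn: 1 point rate upper}. Without these separate arguments the summation over $j$ is not rigorous, although the missing cases are short once identified.
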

The lower bound \eqref{eq: lower bound} follows directly from these two lemmas.
\begin{proof}[Proof of Proposition \ref{prop: lower bound}]
By the Paley--Zygmund inequality, Lemma \ref{lem: first moment} and Lemma \ref{lem: second moment}, we have
$$
\mathbb{P}\left[\max_{h\in[0,1]}X_{r,n}\left(h\right)\geq m_{n-r}\left(-\varepsilon\right)\right]\geq \prob(Z^{-}\geq 1)\geq
 \frac{\E[Z^{-}]^2}{\E[\left(Z^{-}\right)^2]}
 \geq 
(1+o(1)) e^{-2c\delta}\ .
$$
The result follows by taking the limits $n\to\infty$, then $\delta\to 0$.
\end{proof}
We now prove the bound on $\mathbb{E}[Z^{-}]$.
\begin{proof}[Proof of Lemma \ref{lem: first moment}]
Translation invariance implies $\E[Z^{-}]=2^n \ \prob[J^{-}(0)]$.
Consider the probability $\mathbb{Q}_{\lambda}$ from \eqref{eqn: RN1 and RN2}, where $\lambda = \mu(-\varepsilon)/\sigma^2$.
(By \eqref{eqn: one point mean and variance} and \eqref{eqn: sigmak estimate}, this choice of $\lambda$ implies that
$\mathbb{Q}_{\lambda}\left[ Y_k(0) \right]$ is approximately $\mu(-\varepsilon)$.)
Since on the event $J^{-}\left(0\right)$ we have that $X_{r,n}\le (n-r)\mu(-\varepsilon)+\delta$,
the definition of $\mathbb{Q}_{\lambda}$ implies that
\begin{equation}
\label{eqn: one point 1}
\mathbb{P}\left[J^{-}(0)\right]
\ge
\mathbb{Q}_{\lambda}\left[J^{-}\left(0\right)\right]e^{\sum_{k=r+1}^{n}\psi_{k}^{(1)}(\lambda)-\lambda (n-r) \mu\left(-\varepsilon\right) - c\delta}\ .
\end{equation}
Proceeding as in \eqref{eqn: 1 point rate} to estimate $\sum_{k=r+1}^{n}\psi_{k}^{(1)}(\lambda)-\lambda (n-r) \mu\left(-\varepsilon\right)$, and using \eqref{eq: mu squared}, we get
$$
\mathbb{P}\left[J^{-}(0)\right]\geq  \left(1+ o(1)\right) e^{-c\delta} 2^{-(n-r)} (n-r)^{3/2+2\varepsilon}\mathbb{Q}_{\lambda}\left[J^{-}(0)\right]\ .
$$
The event  $J^{-}(0)$ is of the form appearing in the Berry--Esseen approximation of Proposition \ref{prop: one point gaussian comparison}.
The result can be applied with $m=r$, and after recentering the increments by their mean $\mu = \lambda \sigma^2 = \mu(-\varepsilon)$ we get,
$$
\mathbb{Q}_{\lambda}\left[J^{-}(0)\right]=\eta^{\times(n-r)}_{0, \sigma^2}\left[A\right]+O\left(e^{-e^{cr}}\right)\ .
$$
Note that \eqref{eq:ballot theorem lb} of the ballot theorem (Lemma \ref{lem:ballot theorem}) ensures that
\begin{equation}\label{eq: A lb from ballot}
  \eta^{\times(n-r)}_{0, \sigma^2}\left[A\right] \ge c(n-r)^{-3/2}.
\end{equation}
Thus $\eta^{\times(n-r)}_{0, \sigma^2}\left[A\right]$ dominates $e^{-e^{cr}}$, since $r=\lfloor\left(\log\log n\right)^{2}\rfloor$.
This proves the lemma.
\end{proof}
\begin{rem}
We note for future reference that the same reasoning (using that $X_{r,n}\geq (n-r)\mu(-\varepsilon)$ on $J^{-}(0)$, cf. \eqref{eqn: one point 1})
gives the upper bound,
\begin{equation}
\label{eqn: 1 point rate upper}
\mathbb{P}\left[J^{-}(0)\right]\leq  \left(1+ o(1)\right)  2^{-(n-r)} (n-r)^{3/2+2\varepsilon}\eta^{\times(n-r)}_{0, \sigma^2}\left[A\right]\ .
\end{equation}
\end{rem}
To prove the second moment bound in Lemma \ref{lem: second moment} we use the identity
\begin{equation}
\label{eqn: second moment two-point}
\E[(Z^{-})^2]=\sum_{h_{1},h_{2}\in\mathcal{H}_{n} \cap [0,1) }\mathbb{P}\left[J^{-}\left(h_{1}\right)\cap J^{-}\left(h_{2}\right)\right]\ .
\end{equation}
We thus seek bounds on $\mathbb{P}\left[J^{-}\left(h_{1}\right)\cap J^{-}\left(h_{2}\right)\right]$ for $h_1 \ne h_2$.
This is the key additional difficulty in the lower bound calculation.
In essence, these bounds are obtained by conditioning on
the values of the processes $k \mapsto X_{r,k}(h_i)$, close to the ``branching point'' $h_1 \wedge h_2$ (defined in \eqref{eqn: branching}), 
and then applying the following two lemmas.
Lemma \ref{lem:Coupled Bound} gives an estimate
for the part of the event before the branching point (where the processes are coupled),
and Lemma \ref{lem: Decoupled bound} for the part after (where they are decoupled).
To get sufficiently strong coupling and decoupling, each estimate must be applied for scales that are respectively slightly before and slightly after the branching point. 
To quantify this, we use for the decoupling parameter $\Delta$ the value
\begin{equation}\label{eq: Delta def}
\Delta = r/100\ .
\end{equation}
For convenience, define the recentered process
\[
\overline{X}_{r,k}\left(h\right)=X_{r,k}\left(h\right)-\left(k-r\right)\mu(-\varepsilon) \ .
\]
\begin{lem}
\label{lem:Coupled Bound}
Let $h_{1},h_{2}\in\mathbb{R}$ and $l= h_1\wedge h_2$. 
For $i=1,2$ and any $q\geq 0$, define the event
\begin{equation}
\label{eqn: A(q)}
 A_i (q)= \left\{ \overline{X}_{r,l-\Delta}\left(h_{i}\right)\in\left[-q,-q+1\right],\overline{X}_{r,k}\left(h_{i}\right)\le1\mbox{ for }k=r+1,\ldots,l-\Delta\right\}.
\end{equation}
Then for any $q_{1},q_{2}\ge0$,
\begin{equation}\label{eq: coupled bound}
\mathbb{P}\left[A_1(q_1) \cap A_2(q_2) \right]\\
\le c\ \frac{e^{-\left(l-\Delta-r\right)\frac{\mu(-\varepsilon)^{2}}{2\sigma^2}}}{\left(l-\Delta-r\right)^{3/2}}\left(1+q_{1}\right)e^{\frac{1}{2}\frac{\mu(-\varepsilon)}{\sigma^{2}}\left(q_{1}+q_{2}\right)}\ .
\end{equation}
\end{lem}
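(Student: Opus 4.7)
The plan is a change of measure to $\mathbb{Q}_{\vec{\lambda}}$, followed by a Gaussian comparison and a single application of the ballot theorem. Choose $\lambda = \mu(-\varepsilon)/(2\sigma^{2})$ and $\vec{\lambda}=(\lambda,\lambda)$. By Lemma \ref{lem: primes} the covariance matrix satisfies $\vec{\Sigma}_k \approx \sigma^2\bigl(\begin{smallmatrix}1 & 1\\ 1 & 1\end{smallmatrix}\bigr)$ for all scales $k\le l-\Delta\le h_1\wedge h_2-\Delta$, and \eqref{eq: two point mean and variance} then gives $\mathbb{Q}_{\vec{\lambda}}[Y_k(h_i)]\approx 2\lambda\sigma^2=\mu(-\varepsilon)$ in that range. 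Since $A_1(q_1)\cap A_2(q_2)$ depends only on scales $r<k\le l-\Delta$, integrating out all other scales under $\mathbb{Q}_{\vec{\lambda}}$ yields
\[
\mathbb{P}[A_1(q_1)\cap A_2(q_2)] = \mathbb{E}_{\mathbb{Q}_{\vec{\lambda}}}\!\left[\mathbf{1}_{A_1(q_1)\cap A_2(q_2)}\exp\!\left(\sum_{k=r+1}^{l-\Delta}\psi_k^{(2)}(\vec{\lambda}) - \vec{\lambda}\cdot\vec{X}_{r,l-\Delta}\right)\right].
\]

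On the event, $X_{r,l-\Delta}(h_i)\ge (l-\Delta-r)\mu(-\varepsilon)-q_i$ directly from the definition of $A_i(q_i)$, so that $\vec{\lambda}\cdot\vec{X}_{r,l-\Delta}\ge 2\lambda(l-\Delta-r)\mu(-\varepsilon)-\lambda(q_1+q_2)$. In the cumulant direction, Proposition \ref{prop: two point ld} together with the above near-degenerate form of $\vec{\Sigma}_k$ gives $\psi_k^{(2)}(\vec{\lambda})=\tfrac{1}{2}\vec{\lambda}\cdot\vec{\Sigma}_k\vec{\lambda}+O(e^{-2^{k-1}})=2\sigma^2\lambda^2+O(e^{-c\Delta})$ for each $k$ in the range, so the sum equals $(l-\Delta-r)\mu(-\varepsilon)^2/(2\sigma^2)+O(1)$. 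Combining the two estimates, the exponential prefactor in the expectation is bounded by
\[
\exp\!\left(-(l-\Delta-r)\frac{\mu(-\varepsilon)^2}{2\sigma^2} + \frac{1}{2}\frac{\mu(-\varepsilon)}{\sigma^2}(q_1+q_2) + O(1)\right),
\]
which matches the two exponential factors appearing in \eqref{eq: coupled bound}.

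For the remaining $\mathbb{Q}_{\vec{\lambda}}$-probability, bound trivially by $\mathbb{Q}_{\vec{\lambda}}[A_1(q_1)]$; this is where the asymmetric factor $(1+q_1)$ in the claim originates, since all $q_2$-information has been absorbed into the change-of-measure prefactor. Applying Proposition \ref{prop: two point gaussian comparison before branch} with $m=r$ approximates the recentered process $(\overline{X}_{r,k}(h_1))_{r<k\le l-\Delta}$ under $\mathbb{Q}_{\vec{\lambda}}$ by a mean-zero Gaussian random walk with per-step variance $\sigma^2$, with total error $O(e^{-c\Delta})+O(e^{-e^{c\Delta}})$; this error is negligible since $\Delta=r/100$ and $r=\lfloor(\log\log n)^2\rfloor$. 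The event $A_1(q_1)$ then becomes the standard ballot event that an $(l-\Delta-r)$-step centered Gaussian walk stays $\le 1$ throughout and lands in $[-q_1,-q_1+1]$, and Lemma \ref{lem:ballot theorem} with $a=1$, $b=-q_1$, $\delta=1$ produces $c(1+q_1)/(l-\Delta-r)^{3/2}$.

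The main obstacle is the second step: carefully tracking $\psi_k^{(2)}(\vec{\lambda})$ and the inner product $\vec{\lambda}\cdot\vec{X}_{r,l-\Delta}$ in the near-degenerate covariance regime so that the prefactor emerges with precisely the coefficients $1/(2\sigma^2)$ multiplying $\mu(-\varepsilon)^2$ and $\mu(-\varepsilon)/(2\sigma^2)$ multiplying $(q_1+q_2)$. A secondary conceptual point is justifying that dropping the $A_2(q_2)$ constraint before the ballot step is not wasteful: the tilt factor $e^{\lambda q_2}$ already pays the cost of the $h_2$-coordinate reaching level $-q_2$ under $\mathbb{P}$, so the ballot theorem need only be invoked for the $h_1$-coordinate, yielding the lone $(1+q_1)$ factor in the claim.
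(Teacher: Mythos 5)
Your proposal follows the paper's proof step for step: the same tilt $\lambda=\mu(-\varepsilon)/(2\sigma^2)$, the same decomposition of $\mathbb{P}[A_1(q_1)\cap A_2(q_2)]$ into a deterministic exponential prefactor times $\mathbb{Q}_{\vec\lambda}$-expectation, the same trivial drop to $\mathbb{Q}_{\vec\lambda}[A_1(q_1)]$ which produces the asymmetric $(1+q_1)$ factor, and the same conclusion via Proposition \ref{prop: two point gaussian comparison before branch} followed by the ballot theorem \eqref{eq:ballot theorem ub} with $b=-q_1$, $\delta=1$.

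One bookkeeping imprecision: from ``$\psi_k^{(2)}(\vec\lambda)=2\sigma^2\lambda^2+O(e^{-c\Delta})$ for each $k$'' you conclude the sum is $(l-\Delta-r)\mu(-\varepsilon)^2/(2\sigma^2)+O(1)$, but naively summing $l-\Delta-r$ errors of uniform size $O(e^{-c\Delta})$ would give $O((l-\Delta-r)e^{-c\Delta})$, which with $\Delta=(\log\log n)^2/100$ and $l-\Delta-r$ of order $n$ does not stay bounded. The correct reason the total error is $O(1)$ is that the per-$k$ errors actually decay geometrically: by Lemma \ref{lem: primes}, $\rho_k-\sigma^2=O(2^{-2(l-k)})+O(e^{-c\sqrt{2^k}})$ and $\sigma_k^2-\sigma^2=O(e^{-c\sqrt{2^k}})$, both summable over $k$, together with the $\sum_k e^{-2^{k-1}}=O(1)$ term from Proposition \ref{prop: two point ld}. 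This is exactly how the paper bounds the sum, and your final estimate is correct; only the stated intermediate justification is too weak to imply it.
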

\begin{proof}
Let $\lambda=\mu(-\varepsilon)/(2\sigma^2)$ and $\vec{\lambda}=\lambda (1,1)$.
We recall the definition of $\mathbb{Q}_{\vec{\lambda}}$ from \eqref{eqn: RN1 and RN2}.
The choice of $\lambda$ ensures that
$\mathbb{Q}_{\vec{\lambda}}\left[\vec{Y}_{k}(0)\right]$ is approximately $\mu(-\varepsilon)(1,1)$.
By the definition of $\mathbb{Q}_{\vec{\lambda}}$,
\begin{equation}
\label{eqn: A1}
\mathbb{P}\left[A_1(q_1) \cap A_2(q_2) \right] = \mathbb{Q}_{\vec{\lambda}}\left[ \textbf{1}_{A_1 (q_1)\cap A_2(q_2)} {\displaystyle \prod_{i=1,2}}e^{-\lambda\overline{X}_{r,l-\Delta}\left(h_{i}\right)} \right]
 \exp{\left({ \sum_{k=r+1}^{l-\Delta}\{\psi_{k}^{\left(2\right)}\left(\vec{\lambda}\right) -2\lambda\mu(-\varepsilon)}\}\right)} ,
\end{equation}
where $\textbf{1}_{A_1(q_1)\cap A_2(q_2)}$ denotes the indicator function of the event. Using Proposition \ref{prop: two point ld} as well as the covariance estimates \eqref{eqn: sigmak estimate} and \eqref{eqn: correlation estimates}, we have that 
$$
\sum_{k=r+1}^{l-\Delta}\psi_{k}^{\left(2\right)}(\vec{\lambda})
=    \lambda^{2}\sum_{k=r+1}^{l-\Delta}\left(\sigma_{k}^{2}+\rho_{k} + O(e^{-2^{k-1}}) \right)
\le  \lambda^{2}(l-\Delta-r)2\sigma^2 + O(1)
=    (l-\Delta-r)\frac{\mu(-\varepsilon)^2}{2 \sigma^2} + O(1).
$$
This proves that the second exponential in \eqref{eqn: A1} is at most $ce^{-\left(l-\Delta-r\right)\frac{\mu(-\varepsilon)^{2}}{2\sigma^2}}$.
Also on the event $A_1(q_1) \cap A_2(q_2)$, the first exponential is at most $ce^{\lambda q_1 + \lambda q_2}$. Thus
\begin{equation}\label{eq: half-way there}
\mathbb{P}[A_1(q_1)\cap A_2(q_2)]
\le
ce^{-\left(l-\Delta-r\right)\frac{\mu(-\varepsilon)^{2}}{2\sigma^2}+\frac{1}{2}\frac{\mu(-\varepsilon)}{\sigma^{2}}\left(q_{1}+q_{2}\right)}
\mathbb{Q}_{\vec{\lambda}}\left[A_1(q_1)\cap A_2(q_2) \right].
\end{equation}
It remains to bound $\mathbb{Q}_{\vec{\lambda}}\left[ A_1(q_1)\cap A_2(q_2) \right]$. In fact, we drop the condition on $h_2$ and bound $\mathbb{Q}_{\vec{\lambda}}\left[ A_1(q_1) \right]$. We expect not to lose much by this because the behaviour at $h_1$ and $h_2$ should be very similar.
The event $A_1(q_1)$ is of the right form to use Proposition \ref{prop: two point gaussian comparison before branch} with $m=r$ and $n=l-\Delta$.
After recentering of the increments by $\mu(-\varepsilon)$, we get that $\mathbb{Q}_{\vec{\lambda}}\left[A_{1}(q_1)\right]$ is
\[
(1+O(e^{-cr}))
\eta_{0,\sigma^{2}}^{\times(l-\Delta-r)}\left\{ \vec{y}\in\mathbb{R}^{l-\Delta-r}:\begin{array}{c}
\sum_{l'=1}^{k}y_{l'}\le 1 \mbox{ for }k=1,\ldots,l-\Delta -r,\\
\sum_{l'=1}^{l-\Delta-r}y_{l'}\in\left[-q_1,-q_1+1\right]
\end{array}\right\} +O(e^{-e^{cr}})\ .
\]
By \eqref{eq:ballot theorem ub} of the ballot theorem (Lemma \ref{lem:ballot theorem})
with $b = -q_1$ and $\delta=1$ the probability on the right-hand side
is at most $c\frac{1+q_{1}}{(l-\Delta-r)^{3/2}}$. Together with \eqref{eq: half-way there}
this proves \eqref{eq: coupled bound}.
\end{proof}
We now prove the bound for scales after the decoupling point.
One notable difference with the proof of the previous lemma is that the change of measure is now done for a $\lambda$ which is twice the one of Lemma \ref{lem:Coupled Bound}. 
This reflects the fact that, before the branching point, the two processes are essentially coupled, therefore a tilt for one process is also a tilt for the other.
\begin{lem}
\label{lem: Decoupled bound}
Let $h_{1},h_{2}\in\mathbb{R}$. 
For any $h_{1}\wedge h_{2}+\Delta\leq j \leq n$,
and $\delta,\delta'>0$, define for $i=1,2$  and $q\geq 0$ the events
\begin{equation}
\label{eqn: events B}
\begin{aligned}
  B_i(q)&= \left\{ \overline{X}_{j,n}\left(h_{i}\right)-q\in\left[-\delta',\delta\right],\overline{X}_{j,k}\left(h_{i}\right)-q\le 1\mbox{ for }k=j+1,\ldots,n\right\}\\
  \overline{B}_{i} (q)&=
\Big\{\vec{y}\in\left(\mathbb{R}^{2}\right)^{\times\left(n-j\right)}:
  \sum_{k=1}^{n-j}\left(\vec{y}_{k}\right)_{i}-q\in\left[-\delta',\delta\right],
  \sum_{k=1}^{j'}\left(\vec{y}_{k}\right)_{i}-q\le 1,\forall j'=1,\ldots,n-j
\Big\},
\end{aligned}
\end{equation}
where $\vec{y}=\big(\big(\left(\vec{y}_{k}\right)_{1},\left(\vec{y}_{k}\right)_{2}\big), k=1,\dots,n-j\big)$. Then  for $q_{1},q_{2}\in\mathbb{R}$,
\begin{equation}\label{eq: events B bound}
  \mathbb{P}\left[B_1(q_1) \cap B_2(q_2) \right] \le \left(1+o\left(1\right)\right)e^{c\delta'}\prod_{i=1,2}\left\{ e^{-\left(n-j \right)\frac{\mu(-\varepsilon)^{2}}{2\sigma^2}-\frac{\mu(-\varepsilon) }{\sigma^{2}}q_{i}} ( \eta_{0,\sigma^{2}}^{\times\left(n-j \right)}\left[\overline{B}_i(q_i)\right] +  e^{-e^{c\Delta}} ) \right\} \ .
\end{equation}
\end{lem}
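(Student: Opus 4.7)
The strategy follows the blueprint of Lemma \ref{lem:Coupled Bound}, but now exploits that beyond the decoupling scale $h_1\wedge h_2+\Delta$ the two processes $k\mapsto X_{j,k}(h_1)$ and $k\mapsto X_{j,k}(h_2)$ are essentially independent. Accordingly, I would apply the change of measure $\mathbb{Q}_{\vec\lambda}$ from \eqref{eqn: RN1 and RN2} with $\vec\lambda=\lambda(1,1)$ where $\lambda=\mu(-\varepsilon)/\sigma^2$, which is \emph{twice} the value used in Lemma \ref{lem:Coupled Bound}. The reason for doubling is explained by Proposition \ref{prop: two point ld}: for $k>j\ge h_1\wedge h_2+\Delta$ we have $\vec\Sigma_k=\sigma^2\mathbbm{1}+O(e^{-c\Delta})$ by Lemma \ref{lem: primes}, so $\mathbb{Q}_{\vec\lambda}[\vec Y_k]\approx\sigma^2\lambda(1,1)=\mu(-\varepsilon)(1,1)$, giving each coordinate the correct drift separately (rather than having both tilts contribute to one coordinate as in the coupled regime).

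On the event $B_1(q_1)\cap B_2(q_2)$ we have $X_{j,n}(h_i)\in[(n-j)\mu(-\varepsilon)+q_i-\delta',(n-j)\mu(-\varepsilon)+q_i+\delta]$ for $i=1,2$, so the Radon--Nikodym density satisfies
\[
\prod_{i=1,2}e^{-\lambda X_{j,n}(h_i)}\le ce^{c\delta'}\prod_{i=1,2}e^{-\lambda[(n-j)\mu(-\varepsilon)+q_i]}.
\]
Combined with the cumulant estimate
\[
\sum_{k=j+1}^{n}\psi_k^{(2)}(\vec\lambda)=\tfrac{1}{2}\sum_{k=j+1}^{n}\vec\lambda\cdot\vec\Sigma_k\vec\lambda+O(1)=(n-j)\lambda^2\sigma^2+O(e^{-c\Delta}),
\]
which follows from Proposition \ref{prop: two point ld} together with $\rho_k=O(2^{-(k-h_1\wedge h_2)})$ for $k>j$, the change of measure yields the prefactor $(1+o(1))e^{c\delta'}\prod_{i=1,2}e^{-(n-j)\mu(-\varepsilon)^2/(2\sigma^2)-\mu(-\varepsilon)q_i/\sigma^2}$ exactly as required.

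It remains to bound $\mathbb{Q}_{\vec\lambda}[B_1(q_1)\cap B_2(q_2)]$ by the product $\prod_i(\eta^{\times(n-j)}_{0,\sigma^2}[\overline{B}_i(q_i)]+e^{-e^{c\Delta}})$. For this I would invoke Proposition \ref{prop: two point gaussian comparison} with $m=j$ (which is legitimate since $j\ge h_1\wedge h_2+\Delta$), approximating $\mathbb{Q}_{\vec\lambda}$ on the relevant convex event by the \emph{product} Gaussian measure $\eta^{\times2(n-j)}_{\mu(-\varepsilon),\sigma^2}$, with multiplicative error $1+O(e^{-c\Delta})$ and additive error $O(e^{-e^{c\Delta}})$. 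The crucial observation is that $B_i(q_i)$ depends only on the $i$-th coordinate of each increment $\vec y_k$; since under the product Gaussian measure the two coordinates are independent, the probability factorizes as $\eta^{\times(n-j)}_{\mu(-\varepsilon),\sigma^2}[\overline B_1'(q_1)]\cdot\eta^{\times(n-j)}_{\mu(-\varepsilon),\sigma^2}[\overline B_2'(q_2)]$, and recentering each one-dimensional marginal by $\mu(-\varepsilon)$ turns each factor into $\eta^{\times(n-j)}_{0,\sigma^2}[\overline B_i(q_i)]$.

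The main obstacle will be the bookkeeping of error terms. The Gaussian comparison produces a single additive error $O(e^{-e^{c\Delta}})$ on the joint event, but the target estimate \eqref{eq: events B bound} puts an additive error inside \emph{each} factor of the product. This is easily accommodated by the elementary inequality $ab+\varepsilon\le(a+\sqrt\varepsilon)(b+\sqrt\varepsilon)$ for $a,b,\varepsilon\ge0$ (with $\sqrt\varepsilon$ still of order $e^{-e^{c\Delta}}$ after adjusting constants), absorbing the joint additive error into the two per-factor additive errors. The multiplicative $1+O(e^{-c\Delta})$ error gets absorbed into the $(1+o(1))$ prefactor since $\Delta=r/100\to\infty$. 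This completes the proof.
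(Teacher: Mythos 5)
Your proposal is correct and follows essentially the same route as the paper: the same exponential change of measure $\mathbb{Q}_{\vec\lambda}$ with $\vec\lambda=(\mu(-\varepsilon)/\sigma^2)(1,1)$, the same cumulant estimate from Proposition~\ref{prop: two point ld} combined with $\rho_k=O(2^{-(k-h_1\wedge h_2)})$, the Gaussian comparison via Proposition~\ref{prop: two point gaussian comparison} applied with $m=j$, and factorization of the limiting product Gaussian by independence of the two coordinates. The only cosmetic blemish is the intermediate ``$+O(1)$'' in your cumulant line, which should already read $O(e^{-c\Delta})$ (it is a sum of terms $O(e^{-2^{k-1}})$ over $k>j\ge\Delta$, hence doubly exponentially small), but this does not affect the argument since you state the correct final bound, and your $ab+\varepsilon\le(a+\sqrt\varepsilon)(b+\sqrt\varepsilon)$ device for moving the additive Berry--Esseen error inside the product is a clean way to justify the precise form of \eqref{eq: events B bound}.
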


\begin{proof}
Let $\lambda=\frac{\mu(-\varepsilon)}{\sigma^{2}}$, $\vec{\lambda}=\lambda \left(1,1\right)$
and recall the definition of $\mathbb{Q}_{\vec{\lambda}}$ from \eqref{eqn: RN1 and RN2}.
The choice of $\vec{\lambda}$ ensures that $\mathbb{Q}_{\vec{\lambda}}\left[\vec{Y}_{k}\right]$ is approximately $\mu(-\varepsilon) \left(1,1\right)$.
The definition of $\mathbb{Q}_{\vec{\lambda}}$ gives
\begin{equation}
\label{eqn: low scales prob}
\mathbb{P}\left[B_1(q_1) \cap B_2(q_2) \right]
=
\mathbb{Q}_{\vec{\lambda}}\left[ \textbf{1}_{B_1(q_1) \cap B_2(q_2)}  {\displaystyle \prod_{i=1,2}}e^{-\lambda\overline{X}_{j,n}\left(h_{i}\right)} \right]
e^{\sum_{k=j+1}^{n}\psi{}_{k}^{\left(2\right)}\left(\vec{\lambda}\right)-2\lambda (n-j) \mu(-\varepsilon) }.
\end{equation}
By Proposition \ref{prop: two point ld}, \eqref{eqn: sigmak estimate} and \eqref{eqn: correlation estimates},
$$\psi_{k}^{\left(2\right)}(\vec{\lambda})=\lambda^2(\sigma_{k}^2 + \rho_k)
+ O(e^{-2^{k-1}})
=\lambda \mu(-\varepsilon) + O\left(2^{-\left(k- h\wedge h_2 \right)}\right)\ .
$$
We deduce that $\sum_{k=j+1}^{n}\psi_{k}^{\left(2\right)}(\vec{\lambda})$ is at most $\left(n-j\right)\lambda \mu(-\varepsilon)+c2^{-\Delta}$.
Therefore, the second exponential in \eqref{eqn: low scales prob} is $(1+o(1))e^{-2\left(n-j\right)\frac{\mu(-\varepsilon)^{2}}{2\sigma^2}}$.
On the event $B_1(q_1) \cap B_2(q_2)$, the first exponential in \eqref{eqn: low scales prob} is at most $e^{c\delta'}e^{ -\frac{\mu(-\varepsilon)}{\sigma^2}q_1 - \frac{\mu(-\varepsilon)}{\sigma^2}q_2}$.
In view of this, it only remains to show
\begin{equation}
\label{eq: two point independent to show}
\mathbb{Q}_{\vec{\lambda}}\left[B_1(q_1) \cap B_2(q_2) \right]
\le
\left(1+o\left(1\right)\right)\prod_{i=1,2}\eta_{0,\sigma^2}^{\times\left(n-j\right)}\left[\overline{B}_i (q_i)\right] + ce^{-e^{c\Delta}}.
\end{equation}
Note that the event $B_1(q_1) \cap B_2(q_2)$ takes the form considered in Proposition \ref{prop: two point gaussian comparison}. 
Applying Proposition \ref{prop: two point gaussian comparison} with $j$ in place of $m$ and then recentering yields
$$
\mathbb{Q}_{\vec{\lambda}}\left[B_1(q_1) \cap B_2(q_2) \right]\le\left(1+ce^{-c\Delta}\right)\eta_{0,\sigma^{2}}^{\times 2\left(n-j\right)}\left[\overline{B}_{1}(q_1)\cap\overline{B}_{2}(q_2)\right]+ce^{-e^{c\Delta}}\ .
$$
By independence, it is plain that
$\eta_{0,\sigma^{2}}^{\times 2\left(n-j\right)}\left[\overline{B}_{1}(q_1)\cap\overline{B}_{2}(q_2)\right]
=
\prod_{i=1,2}\eta_{0,\sigma^{2}}^{\times\left(n-j\right)}\left[\overline{B}_{i}(q_i)\right]$. 
This proves \eqref{eq: two point independent to show} and therefore also \eqref{eq: events B bound}.
\end{proof}
The previous lemmas will now be used to prove bounds on
$\mathbb{P}\left[J^{-}\left(h_{1}\right)\cap J^{-}\left(h_{2}\right)\right]$
in three cases: i) $h_1\wedge h_2\leq r-\Delta$, 
ii) $r-\Delta<h_1\wedge h_2\leq r+\Delta$, and iii) $r+\Delta<h_1\wedge h_2\leq n-\Delta$.
The case $h_1\wedge h_2>n-\Delta$ is easy and will be handled directly in the proof of Lemma \ref{lem: second moment}.

If $h_1\wedge h_2\leq r-\Delta$ then $h_{1}$ and $h_{2}$ are sufficiently far apart so that the scale 
$r$ is well beyond the ``branching point'' of $h_{1}$ and $h_{2}$, and the events $J^{-}\left(h_{1}\right)$ and $J^{-}\left(h_{2}\right)$ decouple:
\begin{lem}
\label{lem: low scales}
Let $h_{1},h_{2}\in\mathbb{R}$ be such that
$1\le h_{1}\wedge h_{2}\le r-\Delta$. Then
\begin{equation}
\label{eq: two point bound independent}
\mathbb{P}\left[J^{-}\left(h_{1}\right)\cap J^{-}\left(h_{2}\right)\right]\le\left(1+o\left(1\right)\right)\left(\frac{\left(n-r\right)^{\frac{3}{2}+2\varepsilon}}{2^{n-r}}\eta_{0,\sigma^{2}}^{\times\left(n-r\right)}\left[A\right]\right)^{2},
\end{equation}
where $A$ is the event defined in \eqref{eqn: A}.
\end{lem}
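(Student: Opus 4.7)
The plan is to apply Lemma \ref{lem: Decoupled bound} directly at the cutoff scale $j = r$. The point is that the hypothesis $h_1\wedge h_2 \leq r - \Delta$ places the branching scale before $r$, with a buffer of $\Delta$, so from the very first scale that survives the cutoff the two processes $k\mapsto X_{r,k}(h_i)$ are already in the ``decoupled'' regime. No conditioning at the branching point is needed in this case.

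The first step is to match the events. Rewriting in terms of the recentered process gives
$$
J^{-}(h_i) = \bigl\{ \overline{X}_{r,n}(h_i) \in [0,\delta],\ \overline{X}_{r,k}(h_i) \leq 1 \text{ for } k=r+1,\dots,n\bigr\},
$$
which is precisely the event $B_i(q)$ from Lemma \ref{lem: Decoupled bound} with parameters $j=r$, $q=0$, $\delta'=0$. With these same parameters the Gaussian analogue $\overline{B}_i(0)$ only involves the $i$th marginal and coincides exactly with the set $A$ defined in \eqref{eqn: A}. Moreover, the applicability condition $j \geq h_1\wedge h_2 + \Delta$ of Lemma \ref{lem: Decoupled bound} is precisely the hypothesis $h_1\wedge h_2 \leq r - \Delta$ of the present lemma.

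Applying Lemma \ref{lem: Decoupled bound} then yields
$$
\mathbb{P}\left[J^{-}(h_1) \cap J^{-}(h_2)\right] \le (1+o(1)) \prod_{i=1,2}\Bigl\{ e^{-(n-r)\mu(-\varepsilon)^2/(2\sigma^2)} \bigl(\eta_{0,\sigma^2}^{\times(n-r)}[A] + e^{-e^{c\Delta}}\bigr)\Bigr\}.
$$
The estimate \eqref{eq: mu squared} converts the exponential prefactor into $(1+o(1))\,2^{-(n-r)}(n-r)^{3/2+2\varepsilon}$, giving exactly the shape of the right-hand side of \eqref{eq: two point bound independent}.

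The only remaining check is that the additive error $e^{-e^{c\Delta}}$ is negligible compared with $\eta_{0,\sigma^2}^{\times(n-r)}[A]$. The ballot-theorem lower bound \eqref{eq: A lb from ballot} provides $\eta_{0,\sigma^2}^{\times(n-r)}[A] \geq c(n-r)^{-3/2}$, whereas $\Delta = r/100$ with $r = \lfloor(\log\log n)^2\rfloor$ makes $e^{-e^{c\Delta}}$ super-polynomially small in $n$. Hence the additive error can be absorbed into the $(1+o(1))$ factor, which completes the proof. There is no real obstacle here: the work is entirely a bookkeeping exercise of identifying the correct parameters in Lemma \ref{lem: Decoupled bound} and verifying that the choice of $r$ and $\Delta$ dwarfs the Berry--Esseen-type error.
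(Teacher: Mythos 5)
Your proof is correct and follows exactly the same route as the paper: apply Lemma \ref{lem: Decoupled bound} at $j=r$ with $q_1=q_2=0$, $\delta'=0$, then use \eqref{eq: mu squared} to convert the exponential prefactor and the ballot-theorem lower bound \eqref{eq: A lb from ballot} together with the choice $\Delta=r/100$ to absorb the $e^{-e^{c\Delta}}$ error. The only difference is that you spell out explicitly how $J^-(h_i)$ matches $B_i(0)$ and how $\overline{B}_i(0)$ reduces to $A$, which the paper leaves implicit.
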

\begin{proof}
Let $j=r$. By assumption we have $h_{1}\wedge h_{2}+\Delta\le j$,
so Lemma \ref{lem: Decoupled bound} can be applied with $q_{1}=q_2=0$ and $\delta'=0$ to give,
\[
\mathbb{P}\left[J^{-}\left(h_{1}\right)\cap J^{-}\left(h_{2}\right)\right]\le\left(1+o\left(1\right)\right)\left(e^{-\frac{\mu(-\varepsilon)^{2}}{2 \sigma^2}\left(n-r\right)}\left(\eta_{0,\sigma^{2}}^{\times\left(n-r\right)}\left[A\right] + e^{-e^{c\Delta}}\right)\right)^{2}.
\]
By \eqref{eq: A lb from ballot} and \eqref{eq: Delta def} the probability
%The ballot theorem (Lemma \ref{lem:ballot theorem}) implies that
$\eta_{0,\sigma^{2}}^{\times\left(n-r\right)}\left[A\right]$ dominates $e^{-e^{c\Delta}}$, so the claim follows by \eqref{eq: mu squared}.
\end{proof}
In the case where $h_{1}$ and $h_{2}$ are such that their ``branching point'' happens after the scale $r+\Delta$, there is no hope of a decoupling of $J^{-}\left(h_{1}\right)$ and $J^{-}\left(h_{2}\right)$. 
Instead, we need to split the probability into a coupled part and a decoupled part and use Lemmas \ref{lem:Coupled Bound} and \ref{lem: Decoupled bound} separately.
\begin{lem}
\label{lem: branching after r+Delta}
Let $h_1,h_2\in\mathbb{R}$ and $l= h_1 \wedge h_2$.
If $r+\Delta< l \leq n-\Delta$, then
\begin{equation}
\label{eq: two point bound branching after r}
\mathbb{P}\left[J^{-}\left(h_{1}\right)\cap J^{-}\left(h_{2}\right)\right]\le 
c\  2^{-(2n-l)}\ 2^{19\Delta+r}  \ 
\frac{(n-r)^{\left(\frac{3}{2}+2\varepsilon\right)\left(2-\frac{l+3\Delta-r}{n-r}\right)}}{\left(n-l-\Delta\right)^{3}(l-\Delta-r)^{3/2}}\ .
\end{equation}
\end{lem}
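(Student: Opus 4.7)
The plan is to exploit the independence of the increments $(Y_k)_{k}$ at different scales (these depend on disjoint sets of primes) and decompose the event $J^-(h_1)\cap J^-(h_2)$ according to three scale regimes determined by the branching point $l=h_1\wedge h_2$: a \emph{coupled} regime of scales $r+1,\ldots,l-\Delta$ where the two processes are essentially identical, a \emph{transition} regime $l-\Delta+1,\ldots,l+\Delta$ where the processes switch behaviour, and a \emph{decoupled} regime $l+\Delta+1,\ldots,n$ where the two processes are essentially independent. The three corresponding sub-events depend on disjoint blocks of primes and are therefore independent, so their probabilities multiply. The coupled block will be handled by Lemma \ref{lem:Coupled Bound}, the decoupled block by Lemma \ref{lem: Decoupled bound} with $j=l+\Delta$, and the transition block by a crude Gaussian estimate.

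To carry this out I would parametrize by integer offsets $q_i,q_i'\ge 0$ defined so that $\overline{X}_{r,l-\Delta}(h_i)\in[-q_i-1,-q_i]$ and $\overline{X}_{r,l+\Delta}(h_i)\in[-q_i'-1,-q_i']$ for $i=1,2$; these are well defined since the barrier condition in $J^-(h_i)$ forces $\overline{X}_{r,\cdot}(h_i)\le 1$. For fixed offsets the coupled sub-event is exactly $A_1(q_1)\cap A_2(q_2)$ from Lemma \ref{lem:Coupled Bound}. The residual barrier condition $\overline{X}_{r,k}(h_i)\le 1$ for $k>l+\Delta$ translates into $\overline{X}_{l+\Delta,k}(h_i)\le q_i'+2$, and the endpoint condition $\overline{X}_{r,n}(h_i)\in[0,\delta]$ into $\overline{X}_{l+\Delta,n}(h_i)\in[q_i',q_i'+\delta+1]$, so the decoupled sub-event is contained in $B_1(q_1'+1)\cap B_2(q_2'+1)$ in the notation of Lemma \ref{lem: Decoupled bound} with $\delta'=1$ (the shift by $1$ and the choice of $\delta'$ produce only constant multiplicative losses). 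The transition sub-event amounts to $\overline{X}_{l-\Delta,l+\Delta}(h_i)$ landing in specified unit intervals for $i=1,2$; a standard Gaussian local-limit estimate for a sum of $2\Delta$ increments bounds this by $O(1)$ (trivially, even by $1$), comfortably absorbable into the $2^{19\Delta}$ slack in the target bound.

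To combine the three bounds I would apply \eqref{eq: mu squared} to rewrite each factor $e^{-(\mathrm{length})\mu(-\varepsilon)^2/(2\sigma^2)}$ as $2^{-(\mathrm{length})}(n-r)^{(3/2+2\varepsilon)(\mathrm{length})/(n-r)}$, and use the ballot theorem (Lemma \ref{lem:ballot theorem}) to bound $\eta^{\times(n-l-\Delta)}_{0,\sigma^2}[\overline{B}_i(q_i'+1)]$ by $c(1+q_i')/(n-l-\Delta)^{3/2}$. Adding the three effective lengths gives $(l-\Delta-r)+2(n-l-\Delta)=2n-l-3\Delta-r$, producing the prefactor $2^{-(2n-l)}\cdot 2^{3\Delta+r}$ (where the $2^r$ is the non-trivial piece appearing in the claim) and the exponent $(3/2+2\varepsilon)(2-(l+3\Delta-r)/(n-r))$ on $(n-r)$, together with the polynomial ballot factors $(l-\Delta-r)^{-3/2}$ and $(n-l-\Delta)^{-3}$. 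The main obstacle will be the quadruple summation over $(q_1,q_2,q_1',q_2')$: the coupled bound contributes a factor $(1+q_1)e^{\mu(-\varepsilon)(q_1+q_2)/(2\sigma^2)}$ growing in $q_1,q_2$, while the decoupled bound contributes $\prod_i(1+q_i')e^{-\mu(-\varepsilon)q_i'/\sigma^2}$ decaying twice as fast in $q_1',q_2'$. Because the transition estimate effectively concentrates $q_i'$ within $O(\sqrt{\Delta})$ of $q_i$, the net exponent per coordinate is $\mu(-\varepsilon)q_i/(2\sigma^2)-\mu(-\varepsilon)q_i/\sigma^2 + O(\sqrt{\Delta})=-\mu(-\varepsilon)q_i/(2\sigma^2)+O(\sqrt{\Delta})$, which is negative for large $q_i$, so the series converges. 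All residual $\Delta$- and $r$-dependent multiplicative losses are then safely absorbed into the factor $2^{19\Delta+r}$, yielding the stated bound.
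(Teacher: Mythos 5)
Your overall decomposition is the right one and matches the paper's: split scales into a coupled block $(r,l-\Delta]$ handled by Lemma \ref{lem:Coupled Bound}, a transition block $(l-\Delta,l+\Delta]$, and a decoupled block $(l+\Delta,n]$ handled by Lemma \ref{lem: Decoupled bound}, using independence of disjoint prime ranges. However, your treatment of the transition block has a genuine gap.

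The claim that the transition probability ``bounds by $O(1)$ (trivially, even by $1$)'' cannot carry the argument. With that bound the quadruple sum factors as
$\bigl(\sum_{q_1}(1+q_1)e^{\mu q_1/(2\sigma^2)}\bigr)\bigl(\sum_{q_2}e^{\mu q_2/(2\sigma^2)}\bigr)\bigl(\sum_{q_1'}(1+q_1')e^{-\mu q_1'/\sigma^2}\bigr)\bigl(\sum_{q_2'}(1+q_2')e^{-\mu q_2'/\sigma^2}\bigr)$,
and the first two factors diverge. Convergence is only restored if the transition estimate constrains $q_i'$ in terms of $q_i$, which you do invoke in the next sentence, but this contradicts the ``even by $1$'' claim; the two statements cannot both stand. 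Moreover, even taking the stronger version, you rely on a ``standard Gaussian local-limit estimate'' for the transition increment $\overline{X}_{l-\Delta,l+\Delta}(h_i)$. These increments are not Gaussian, so a local limit theorem is not immediately available without additional work (a Berry--Esseen-type local estimate), and you would also have to control the \emph{joint} law of the two strongly correlated transition increments, not just the marginals. The paper sidesteps all of this: it keeps the transition variable continuous (writing $\mathbb{P}[C_1\cap C_2]$ as an integral against the joint density $f(y_1,y_2)$ of $(\overline{X}_{l-\Delta,l+\Delta}(h_i))_{i=1,2}$) and bounds the resulting integral $\int\prod_i(1+q_i-y_i)e^{\mu y_i/\sigma^2}f\,dy$ using the exponential moment bound from Proposition \ref{prop: two point ld}, i.e.\ \eqref{eqn: ABC5}--\eqref{eqn: ABC6}. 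No Gaussian approximation is needed in this block, and the exponential moment computation yields directly the factor $(1+q_1)(1+q_2)2^{16\Delta}$, after which the $q_1,q_2$ sum converges because the decoupled bound's $e^{-\mu q_i/\sigma^2}$ outweighs the coupled bound's $e^{\mu q_i/(2\sigma^2)}$. Finally, a minor imprecision: under $\mathbb{P}$ the recentered transition increment $\overline{X}_{l-\Delta,l+\Delta}(h_i)$ has mean $-2\Delta\mu(-\varepsilon)$, so the concentration is of $q_i'$ around $q_i+O(\Delta)$, not $q_i+O(\sqrt{\Delta})$; this does not break convergence but shows up as part of the $2^{O(\Delta)}$ loss that is absorbed into the $2^{19\Delta}$ factor.
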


\begin{proof}
Write $\overline{X}_{r,n}\left(h\right)=\overline{X}_{r,l-\Delta}\left(h\right)+\overline{X}_{l-\Delta, n}\left(h\right)$
and decompose the event $J^{-}\left(h_{1}\right)\cap J^{-}\left(h_{2}\right)$ over the values of $\overline{X}_{r,l-\Delta}\left(h\right)$ as follows
\[
\bigcup_{q_{1},q_{2}=0}^{\infty}\left( J^{-}\left(h_{1}\right)\cap J^{-}\left(h_{2}\right)\cap\bigcap_{i=1}^{2}\left\{ \overline{X}_{r,l-\Delta}\left(h_{i}\right)\in\left[-q_{i},-q_{i}+1\right]\right\} \right) \ .
\]
For fixed $q_{1},q_{2}$, the event in the union is contained in
$
\bigcap_{i=1,2}A_{i}\left(q_{i}\right)\cap C_{i}\left(q_{i}\right),
$
where the events $A_i(q)$ are defined in \eqref{eqn: A(q)} and for $i=1,2$,
\[
\begin{array}{l}
C_{i}\left(q\right)=\left\{ \overline{X}_{l-\Delta,n}\left(h_{i}\right)\in\left[q-1,q+\delta\right],\overline{X}_{l-\Delta,k}\left(h_{i}\right)\le q+1\mbox{ for }k=l-\Delta,\ldots,n\right\} .
\end{array}
\]
Now note that $ \left( \overline{X}_{r,k}\left(h_{i}\right) \right)_{r\le k\le l - \Delta},i=1,2,$ are
independent from $\left( \overline{X}_{l-\Delta,k}\left(h_{i}\right) \right)_{l - \Delta \le k \le n},i=1,2.$
Altogether we get that
\begin{equation}
\label{eqn: ABC1}
\mathbb{P}\left[J^{-}\left(h_{1}\right)\cap J^{-}\left(h_{2}\right)\right]\le\sum_{q_{1},q_{2}=0}^{\infty}\mathbb{P}\left[A_1(q_1) \cap A_2(q_2)\right]\mathbb{P}\left[ C_1(q_1) \cap C_2(q_2) \right].
\end{equation}
Lemma \ref{lem:Coupled Bound} gives
$$
\mathbb{P}\left[A_1(q_1) \cap A_2(q_2)\right]
\le
c\frac{e^{-\left(l-\Delta-r\right) \frac{\mu(-\varepsilon)^{2}}{2\sigma^2}}}{\left(l-\Delta-r\right)^{3/2}} \left(1+q_{1}\right) e^{\frac{\mu(-\varepsilon)}{2\sigma^{2}}\left(q_{1}+q_{2}\right)}.
$$
In order to use Lemma \ref{lem: Decoupled bound}, we express the probability on the event $C_i$'s by conditioning on $\overline{X}_{l-\Delta,l+\Delta}(h_{i})$, which are independent of $\overline{X}_{l+\Delta,n}(h_{i})$. We have 
\begin{equation}
\label{eqn: ABC3}
\mathbb{P}\left[ C_1(q_1) \cap C_2(q_2) \right]
=\int_{\R^2}\prob\left[B_1(q_1-y_1)\cap B_1(q_2-y_2)\right] \ f(y_1,y_2) dy_1dy_2
\end{equation}
where $f(y_1,y_2)$ is the density of $(\overline{X}_{l-\Delta,l+\Delta}(h_i), i=1,2)$
, and the events $B_i$'s are as in \eqref{eqn: events B} with $\delta'=1$. 
Lemma  \ref{lem: Decoupled bound} then gives
\begin{equation}
\label{eqn: ABC4}
\mathbb{P}\left[ B_1(q_1-y_1) \cap B_2(q_2-y_2) \right]
\le
c
\frac{e^{-2\left(n-l-\Delta\right) \frac{\mu(-\varepsilon)^{2}}{2\sigma^2}}}{\left(n-l-\Delta\right)^3}
\prod_{i=1,2}(1+q_i-y_i)e^{-\frac{\mu(-\varepsilon)}{\sigma^{2}}\left(q_{i}-y_i\right)},
\end{equation}
using also that $\eta_{0,\sigma^{2}}^{\times\left(n-l\right)}\left[\overline{B}_i(q_i-y_i)\right] \le c(1+q_i-y_i)/\left(n-l-\Delta\right)^{3/2}$
by \eqref{eq:ballot theorem ub} of the ballot theorem with $\delta' + \delta$  in place of $\delta$ and $b = q_i - y_i - \delta'$. Thus
$$
\mathbb{P}\left[J^{-}\left(h_{1}\right)\cap J^{-}\left(h_{2}\right)\right]
\le 
\int_{\R^2} \prod_{i=1,2}(1+q_i-y_i)e^{-\frac{\mu(-\varepsilon)}{\sigma^{2}}\left(q_i - y_i\right)}f(y_1,y_2) dy_1dy_2.
$$
To handle the integral, note that Proposition \ref{prop: two point ld} implies
\begin{equation}
\label{eqn: ABC5}
\E[e^{\frac{\mu(-\varepsilon)}{\sigma^{2}}(\sum_{i=1,2}\overline{X}_{l-\Delta,l+\Delta}(h_{i}))}]
\leq c \exp\left(\sum_{k=l-\Delta+1}^{l+\Delta}\frac{\mu(-\varepsilon)^2}{\sigma^{4}}(\sigma_k^2+\rho_k)\right)
\leq c e^{ \Delta \ 16\log 2}\ .
\end{equation}
where the last inequality follows from \eqref{eqn: sigmak estimate} and the inequalities $\rho_k\leq \sigma_k^2 \leq 2 \sigma^2$ and
$\mu(-\varepsilon)/\sigma^{2} \le 2$ (see \eqref{eq: mu squared}).
Using \eqref{eq: two point mean and variance}, the same estimate holds for $\E[\overline{X}_{l-\Delta,l+\Delta}(h_{1})e^{\frac{\mu(-\varepsilon)}{\sigma^{2}}(\sum_{i=1,2}\overline{X}_{l-\Delta,l+\Delta}(h_{i}))}]$
and $\E[\prod_{i=1,2}\overline{X}_{l-\Delta,l+\Delta}(h_{i})e^{\frac{\mu(-\varepsilon)}{\sigma^{2}}\overline{X}_{l-\Delta,l+\Delta}(h_{i})}]$.
Altogether this implies
\begin{equation}
\label{eqn: ABC6}
\int_{\R^2} \prod_{i=1,2}(1+q_i-y_i)e^{\frac{\mu(-\varepsilon)}{\sigma^{2}}\left(y_i\right)} f(y_1,y_2) dy_1dy_2
\leq c  (1+q_1)(1+q_2) e^{ \Delta \ 16\log 2}\ .
\end{equation}
Thus, equations \eqref{eqn: ABC1} to \eqref{eqn: ABC6} yield
\[
\mathbb{P}\left[J^{-}\left(h_{1}\right)\cap J^{-}\left(h_{2}\right)\right]
\le
c 2^{ 16 \Delta}
\frac{
e^{- \left( 2(n-l-\Delta) +(l-\Delta-r) \right) \frac{\mu(-\varepsilon)^{2}}{2\sigma^2}}
}{
  \left(l-\Delta-r\right)^{3/2}\left(n-l-\Delta\right)^{3}}\ ,
\]
where we used the fact that   $\sum_{q_{1},q_{2}=0}^{\infty}
  \left(1+q_{1}\right)^2\left(1+q_2\right)e^{-cq_1-cq_2 }$ is finite. The claim then follows from \eqref{eq: mu squared}.
  \end{proof}
The case where the branching point is between $r-\Delta$ and $r+\Delta$ is handled similarly.
\begin{lem}
\label{lem: buffer}
Let $h_{1},h_{2}\in\mathbb{R}$ be such that
$r-\Delta\le h_{1}\wedge h_{2}\le r+\Delta$. Then
\begin{equation}
\label{eq: buffer}
\mathbb{P}\left[J^{-}\left(h_{1}\right)\cap J^{-}\left(h_{2}\right)\right]\le
c 2^{18 \Delta}2^{-2(n-l-\Delta)} (n-r)^{4\e}\ ,
\end{equation}
where $l :=h_1\wedge h_2$.
\end{lem}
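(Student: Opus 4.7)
The plan is to adapt the strategy of Lemma \ref{lem: branching after r+Delta}. Because $l$ is within $\Delta$ of $r$, the ``coupled'' stretch $[r,l-\Delta]$ has collapsed, so only two regions remain: a short ``buffer'' $[r, l+\Delta]$ of at most $2\Delta$ scales, and the long ``decoupled'' stretch $[l+\Delta, n]$. In the buffer there is no hope of extracting a ballot-theorem gain, but its contribution is only $2^{O(\Delta)}$ because the number of scales is bounded by $\Delta$; in the decoupled stretch, Lemma \ref{lem: Decoupled bound} applies directly.

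Concretely, first decompose $\overline{X}_{r,n}(h_i) = \overline{X}_{r,l+\Delta}(h_i) + \overline{X}_{l+\Delta,n}(h_i)$ and stratify the event by the integer parts of $-\overline{X}_{r,l+\Delta}(h_i)$:
\[
J^-(h_1)\cap J^-(h_2)\subseteq \bigcup_{q_1,q_2\ge 0}\bigcap_{i=1,2}\{\overline{X}_{r,l+\Delta}(h_i)\in[-q_i,-q_i+1]\}\cap C_i(q_i),
\]
where $C_i(q)$ is the event $B_i(q)$ of \eqref{eqn: events B} taken with $j=l+\Delta$ and $\delta'=1$. Since the $Y_k$'s are independent across $k$, the ``buffer'' event and the $C_i(q_i)$'s form independent factors, and a union bound over $q_1,q_2$ reduces the task to estimating a sum of products of two probabilities.

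The decoupled factor $\mathbb{P}[\bigcap_i C_i(q_i)]$ is handled by Lemma \ref{lem: Decoupled bound} applied with $j=l+\Delta$, $\delta'=1$, together with the ballot-theorem estimate \eqref{eq:ballot theorem ub} giving $\eta_{0,\sigma^2}^{\times(n-l-\Delta)}[\overline{B}_i(q_i)]\le c(1+q_i)/(n-l-\Delta)^{3/2}$. The buffer factor is controlled by an exponential Chebyshev bound with $\vec\lambda=-(\mu(-\varepsilon)/\sigma^2)(1,1)$: writing $\lambda=\mu(-\varepsilon)/\sigma^2$, Proposition \ref{prop: two point ld} applied successively on the at most $2\Delta$ scales, combined with \eqref{eqn: sigmak estimate} and \eqref{eqn: correlation estimates} and the bound $\lambda\le 2/\sigma^2$ from \eqref{eq: mu squared}, yields $\mathbb{E}[e^{-\lambda\sum_i\overline{X}_{r,l+\Delta}(h_i)}]\le 2^{O(\Delta)}$, so the buffer factor is at most $c\,2^{O(\Delta)}e^{-\lambda(q_1+q_2)}$.

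Multiplying and summing the convergent series $\sum_{q_1,q_2}(1+q_1)(1+q_2)e^{-2\lambda(q_1+q_2)}$ yields
\[
\mathbb{P}[J^-(h_1)\cap J^-(h_2)]\le c\,2^{O(\Delta)}\,\frac{e^{-2(n-l-\Delta)\mu(-\varepsilon)^2/(2\sigma^2)}}{(n-l-\Delta)^3}.
\]
Plugging in \eqref{eq: mu squared} gives $e^{-2(n-l-\Delta)\mu(-\varepsilon)^2/(2\sigma^2)}=2^{-2(n-l-\Delta)}(n-r)^{(3+4\varepsilon)(n-l-\Delta)/(n-r)}\le 2^{-2(n-l-\Delta)}(n-r)^{3+4\varepsilon}$, and since $l\le r+\Delta$ together with $n-r\gg\Delta$ gives $(n-l-\Delta)^{-3}\le c(n-r)^{-3}$, we obtain \eqref{eq: buffer}. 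The only delicate point is balancing the tilt: the chosen $\lambda=\mu(-\varepsilon)/\sigma^2$ must produce a factor $e^{-\lambda q_i}$ in \emph{both} the buffer and the decoupled estimates, which is what makes the final sum over $q_1,q_2$ converge even against the ballot factor $(1+q_i)$.
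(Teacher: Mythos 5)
Your proof is correct and follows essentially the same route as the paper: split $\overline{X}_{r,n}$ at $l+\Delta$, keep the barrier constraint at $k=l+\Delta$ to justify $q_i\ge 0$, drop the remaining barrier on the buffer, and apply Lemma \ref{lem: Decoupled bound} with $j=l+\Delta$ and $\delta'=1$ together with the ballot theorem for the decoupled stretch. The one place you diverge is the treatment of the buffer factor: the paper integrates the decoupled bound against the joint density $f(y_1,y_2)$ of $\big(\overline{X}_{r,l+\Delta}(h_i)\big)_{i=1,2}$, and then bounds $\int (1-y_1)(1-y_2)e^{\frac{\mu(-\varepsilon)}{\sigma^2}(y_1+y_2)}f\,dy$ by a positive-tilt exponential moment via Proposition \ref{prop: two point ld} (this is the step \eqref{eqn: ABC5}); you instead apply Markov/Chernoff with the \emph{negative} tilt $-\lambda$ to $\mathbb{P}[\cap_i\{\overline{X}_{r,l+\Delta}(h_i)\in[-q_i,-q_i+1]\}]$ and then sum. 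Both work. Your version carries an extra $e^{-\lambda q_i}$ from the buffer Markov bound on top of the $e^{-\lambda q_i}$ supplied by Lemma \ref{lem: Decoupled bound}, which is harmless (the sum over $q_i$ already converges either way), but it also costs you a strictly worse constant in the exponent of $2^{O(\Delta)}$: with the negative tilt the centering contributes $+2(l+\Delta-r)\lambda\mu(-\varepsilon)>0$ per vector, so the crude bound lands around $2^{24\Delta}$ rather than the $2^{16\Delta}$ the paper obtains. This does not matter downstream (in the second-moment estimate one only needs the exponent to be $\ll r = 100\Delta$, and the paper itself uses $2^{16\Delta}$ when invoking the lemma, not the stated $2^{18\Delta}$), but strictly speaking you are proving the bound with a different absolute constant in front of $\Delta$, not the specific $18\Delta$ in the statement. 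Two small points to tidy: the bound from \eqref{eq: mu squared} is $\mu(-\varepsilon)/\sigma^2\le 2$, i.e.\ $\lambda\le 2$, not $\lambda\le 2/\sigma^2$; and since Proposition \ref{prop: two point ld} is stated for $\lambda,\lambda'>0$, you should note that $\psi_k^{(2)}$ is an even function of $\vec\lambda$ (by symmetry of the $U_p$), so the bound applies equally to your negative tilt.
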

\begin{proof}
Since $r-\Delta<l\leq r+\Delta$, we have the decomposition 
$\overline{X}_{r,n}(h)=\overline{X}_{r,l+\Delta}(h)+\overline{X}_{l+\Delta,n}(h)$. 
We proceed as in Lemma \ref{lem: branching after r+Delta} by conditioning on $\overline{X}_{r,l+\Delta}(h_i)$, $i=1,2$,
and then drop the barrier condition on $\overline{X}_{r,l+\Delta}(h_i)$ for both $i=1$ and $i=2$. 
Following \eqref{eqn: ABC3} and \eqref{eqn: ABC4}, this gives
$$
\mathbb{P}\left[J^{-}\left(h_{1}\right)\cap J^{-}\left(h_{2}\right)\right]\le
c\frac{e^{-2\left(n-l-\Delta\right) \frac{\mu(-\varepsilon)^{2}}{2\sigma^2}}}{\left(n-l-\Delta\right)^3}
\int_{\R^2}\prod_{i=1,2}(1-y_i) e^{\frac{\mu(-\varepsilon)}{\sigma^{2}}y_i} f(y_1,y_2)dy_1dy_2\ ,
$$
where $f(y_1,y_2)$ is now the density of $(\overline{X}_{r, l+\Delta}(h_i), i=1,2)$.
The integral can be estimated using Proposition \ref{prop: two point ld} as in \eqref{eqn: ABC5}. 
It is smaller than $c2^{16\Delta}$. By \eqref{eq: mu squared}, the fraction in front of the integral is
$$
2^{-2(n-l-\Delta)} (n-r)^{4\e \frac{n-l-\Delta}{n-r}}\ (n-r)^{3\frac{n-l-\Delta}{n-r}}/\left(n-l-\Delta\right)^3 \ .
$$
Since $r-\Delta<l<r+\Delta$, this is smaller than $c 2^{-2(n-l-\Delta)}(n-r)^{4\e}$ as claimed. 
\end{proof}

We now have the necessary two-point estimates to prove the upper bound on $\E[(Z^{-})^2]$.

\begin{proof}[Proof of Lemma \ref{lem: second moment}]
We split the sum in \eqref{eqn: second moment two-point} into four terms depending on the branching point $h_1\wedge h_2$ of the pair $h_1,h_2\in \mathcal{H}_{n}\cap[0,1)$:
$$
\underset{\left(I\right)}{\underbrace{\sum_{h_{1},h_{2}: \ h_1\wedge h_2\leq r-\Delta}(\cdot)}} +
\underset{\left(II\right)}{\underbrace{\sum_{h_{1},h_{2}:\ r-\Delta< h_1\wedge h_2\leq r+\Delta}(\cdot)}} 
+\underset{(III)}{\underbrace{\sum_{h_{1},h_{2}:\ r+\Delta<h_{1}\wedge h_{2} < n-\Delta}(\cdot)}}
+\underset{\left(IV\right)}{\underbrace{\sum_{h_{1},h_{2}: \ h_1\wedge h_2 \ge n-\Delta}(\cdot)}}\ .
$$
Using that $\#\mathcal{H}_{n}\cap[0,1)=2^{n}$ and the bound \eqref{eq: two point bound independent}, we get
$$
\left(I\right)\le (1+o(1)) \left(2^r (n-r)^{\frac{3}{2}+2\varepsilon}\eta_{0,\sigma^2}^{\times\left(n-r\right)}\left[A\right]\right)^{2}\ .
$$
By \eqref{eq: A lb from ballot} the right-hand side is at least $c2^{2r} (n-r)^{4\varepsilon}$.
We now show that $(II)$, $(III)$ and $(IV)$ are negligible compared to this,
and thus (I) is the dominant term in the sum.
Note that the number of pairs $h_1,h_2 \in \mathcal H_n \cap [0,1)$ such that $2^{-l-1} \le |h_1-h_2| \le 2^{-l}$ is at most $c2^{2n-l}$.
Thus the contribution of $(II)$, by Lemma
\ref{lem: buffer}, is at most
$$
\left(II\right)\leq c \sum_{l=r-\Delta+1}^{r+\Delta}2^{2n-l}\ 2^{16 \Delta}2^{-2(n-l-\Delta)} (n-r)^{4\e}
\leq c 2^{19\Delta} 2^r (n-r)^{4\e}\ ,
$$
which is negligible compared to $2^{2r} (n-r)^{4\e}$, because of the choice $\Delta=r/100$.
Similarly, the contribution of $(III)$ can be bounded as
\[
\left(III\right)\leq \sum_{l=r+\Delta+1}^{n-\Delta -1}2^{2n-l}\max_{ h \in [2^{-l-1}, 2^{l}] } \mathbb{P}\left[J^{-}\left(0\right)\cap J^{-}\left(h\right)\right]\ .
\]
Lemma \ref{lem: branching after r+Delta} then yields
$$
\begin{aligned}
\left(III\right)
&\le c 2^{r+19 \Delta}\left(n-r\right)^{4\varepsilon}
\sum_{l=r+\Delta+1}^{n-\Delta-1} \frac{(n-r)^{\frac{3}{2}\left(2-\frac{l+3\Delta-r}{n-r}\right)}}{(n-l-\Delta)^3(l-\Delta-r)^{3/2}}\\
&=c 2^{r+19 \Delta}\left(n-r\right)^{4\varepsilon}\sum_{a=1}^{m-2\Delta-1}\frac{m^{\frac{3}{2}\left(2-(a+2\Delta)/m\right)}}{(m-a - 2\Delta)^{3}a^{3/2}}\ , \text{ for $m=n-r$}\\
&\le c2^{r+19 \Delta}\left(n-r\right)^{4\varepsilon},
\end{aligned}
$$
where the last inequality follows from the fact that the sum over $a$ stays finite as $m\to\infty$.
Since $\Delta=r/100$, the bound on $(III)$ is negligible relative to the bound on $(I)$.
Finally, for $(IV)$, the event $J^{-}\left(h_{2}\right)$ can be dropped.
There are at most $2^{n+\Delta}$ pairs $h_{1},h_{2}\in\mathcal{H}_{n} \cap [0,1)$ such that $\left|h_{1}-h_{2}\right|\le2^{-n+\Delta}$.
 A union bound using the one-point bound \eqref{eqn: 1 point rate upper}  gives
$$
\left(IV\right) \leq 2^{n+\Delta}\prob[J^{-}(0)]\leq (1+o(1)) \ 2^{r+\Delta} (n-r)^{2\varepsilon}\ .
$$
 Again, this is negligible relative to the bound on $(I)$. Therefore
 $$
 \left(I\right)+\left(II\right)+\left(III\right)+\left(IV\right)\leq  (1+o(1)) \left(2^r (n-r)^{\frac{3}{2}+2\varepsilon}\eta_{0,\sigma^2}^{\times\left(n-r\right)}\left[A\right]\right)^{2}\ ,
 $$
 which proves the lemma. 
\end{proof}
This bound on the second moment of $Z^{-}$
concludes the proof of lower bound Proposition \ref{prop: lower bound}, and therefore also
for the main result Theorem \ref{thm: main}. 

\bibliographystyle{plain}

\bibliography{bib_branching_primes}

\end{document}